\renewcommand{\geq}{\geqslant}
\renewcommand{\leq}{\leqslant}
\numberwithin{equation}{section}
\newcommand{\rr}{\mathbb{R}}
\newcommand{\be}{\begin{eqnarray*}}
\newcommand{\bel}{\begin{eqnarray}}
\newcommand{\ee}{\end{eqnarray*}}
\newcommand{\eel}{\end{eqnarray}}
\newcommand{\ba}{\begin{aligned}}
\newcommand{\ea}{\end{aligned}}
\newcommand{\de}{\Delta}
\newcommand{\al}{\alpha}
\newcommand{\na}{\nabla}
\newcommand{\bu}{{\mathbf u}}
\newcommand{\ep}{\epsilon}
\newcommand{\pa}{\partial}
\newcommand{\wh}{\widehat}
\newcommand{\bx}{\mathbf{x}}
\newcommand{\myr}[1]{{\color{red}{#1}}} % comments by Tadmor
\newcommand{\myb}[1]{{\color{blue}{#1}}}
\newtheorem{thm}{Theorem}[section]
\newtheorem{cor}{Corollary}[section]
\newtheorem{lem}{Lemma}[section]
\theoremstyle{remark}
\newcommand{\grad}{\nabla}
\newcommand{\y}{x_2}
\newcommand{\bxm}{{\bx_-}}
\title{On the fast spreading scenario}
\date{\today}
\begin{document}

\title[On the fast spreading scenario]{On the fast spreading scenario}

\author{Siming He}
\address{Department of Mathematics, Duke University, Durham, NC 27708}
\email{siming.he@duke.edu}

\author{Eitan Tadmor}
\address{Department of Mathematics and Institute for Physical Science \& Technology\newline
 University of Maryland, College Park, MD 20742}
\email{tadmor@umd.edu}

\author{Andrej Zlato\v{s}}
\address{Department of Mathematics, University of California San Diego, La Jolla, CA 92093}
\email{zlatos@ucsd.edu}

\thanks{\textbf{Acknowledgment.} We thank  J.~Carrillo, F.~Hoffman, A.~Kiselev, and L.~Rhyzhik for useful discussions. This work was supported in part by NSF grants  DMS-2006372 and DMS-2006660 (SH);  NSF grant DMS-1613911 and ONR grant N00014-1812465 (ET); NSF grant DMS-1900943 and a Simons Fellowship (AZ)}
\subjclass{92C17; 80A25; 35K57; 35K15; 35B44}

%%%% Keyword entries to be placed here %%%%
\keywords{}

%\date{\today}

\begin{abstract}
We study  two types of divergence-free fluid flows on unbounded domains in two and three dimensions --- hyperbolic  and shear flows --- and their influence on chemotaxis and combustion.   We show that fast spreading by these flows, when they are strong enough, can suppress growth of solutions to PDE modeling these phenomena.  This includes prevention of singularity formation and global regularity of solutions to advective Patlak-Keller-Segel equations on $\rr^2$ and $\rr^3$, 
confirming numerical observations in \cite{KJCY16}, as well as quenching in advection-reaction-diffusion equations. 
\end{abstract}

\maketitle

%\tableofcontents

\section{Introduction and Main Results}\ifx\myb{So I had a look at the proofs now.  It took a bit longer than I
thought because a couple of days after we posted the paper I finished
a month ago, an annoying issue cropped up that took a while to deal
with.  Then I went for a family trip in the mountains, so I didn’t
start looking at this until a few days ago.  Sorry for the delay.

Anyway, I’d like to make some edits in the paper, but before I do
them, could you first make a few adjustments?  You can then send me
the latest tex file and I’ll put in my edits.

In Step \#1 of the proof of Thm 1.1, it would be nice to add some more
details and citations so the reader doesn’t have to do come up with
computations himself.  Specifically:

1.  Citation or indication for the “standard calculation” leading to (2.18). \myr{A: I have added the complete calculation. And I have added a reference to the mass M.}

2. Indication of the estimate on the nonlinear term used in the
displayed line between (2.18) and (2.19). \myr{A: Added. Right now $T_1$ is the nonlinearity.}

3.  Wouldn’t it be better to show that the advective term integrates
to zero before integrating in time to get (2.19)? \myr{A: Changed.}

4.  It would be good to either include footnote 3 in Thm B.1 or
indicate a bit more the argument (or give a citation)? \myr{A: I have rewritten the Step 1 to make sure that the footnote is addressed properly}

5.  In the proof that the advective term integrates to 0, do we need
to pick $R_i$ in some way, since the moment bound is only stated as an
integral over the whole space. \myr{Here I just choose it to approach infinity such that the boundary contribution goes to zero. I apply the Lebesgue dominated convergence theorem  on a sequence of functions $\mathbf{1}_{|\bx|\leq R_i}n \bu\cdot \na n$ here. }

6.  Citation for the Moser-Alikakos iteration at the end of step 1. \myr{Added.}

Also, in Step 2:

7.  Explain what is $M$, it suddenly appears out of nowhere. \myr{Added a reference.}

8.  The displayed equation after (2.21) may need a bit more detail. \myr{Added more details.}

Btw, do we need a higher moment bound in the 3D argument of Thm 1.1? \myr{The moment calculation in 3D is actually different from before. I have added the calculations here. You can still derive higher moment estimates. When we do the explicit $L^2$-energy estimate, the second moment bound should be enough to guarantee the convergence of the boundary terms. Because we can follow the same strategy as in the two dimensional case, and the boundary terms near infinity will vanish if $||n||_\infty$  is qualitatively bounded and $\lim_{i\rightarrow\infty}\int_{\pa B(0,R_i)}|\bx|n dS_{R_i}$ to vanish.}

Also, the reaction-diffusion argument can be made much easier.  The
non-linear solution can be estimated by $e^t$ times the linear solution
(or $e^{ct}$ times it if $f(z)<=cz$).  So if you make the linear solution
small quickly, you immediately get the same for the nonlinear one.
Unless I missed something, this shows that there’s no need for
estimation of moments etc.
\myr{You are right! I adjust the argument accordingly and added the three dimensional version.}

I didn’t yet look at the shear flow stuff but that should just be a variation on the same theme, right? \myr{A: The shear case is easier and follows the same theme.}
}\fi

In this paper we study a \emph{fast spreading scenario}, a  dispersive phenomenon in which a density $n\geq 0$, subject to
advection, diffusion, and a nonlinear interaction $N$ modeled by the PDE
\begin{equation}\label{general}
 \pa_t n + A {\mathbf u}(\bx)\cdot \na n =\de n+\mathcal{N}(n),
\end{equation}
%(with $ \bx\in \rr^d$ or $\rr\times\mathbb T^{d-1}$), 
undergoes rapid stretching in a preferred direction dictated by the incompressible advecting velocity field ${\mathbf u}$ with a large amplitude $A\gg1$.
We shall consider this setup in two as well as three spatial dimensions, with the following two types  of (divergence-free) advection regimes.
 
\medskip\noindent
\textbf{(A1) Hyperbolic flow regime:} ${\mathbf u}(\bx):=(-\bxm,x_d)$ on $\rr^d$, with  $\bx_-:=\frac{1}{d-1}(x_1,\ldots, x_{d-1})$ and hence advective stretching in the $x_d$-direction.
% due to the large advection $A\bu\cdot \na =-A\bxm\cdot\nabla_- +Ax_d\partial_{x_d}$. 

\medskip\noindent
\textbf{(A2) Shear flow regime:} $\mathbf{u}(\mathbf{x}):=(u(\bx_-),{\mathbf 0}_-)$ on $\rr\times\mathbb T^{d-1}$,  with 
%$u\in C^\infty(\mathbb T^{d-1})$ and 
$\bx_-:=(x_2,\ldots,x_d)$ and hence advective stretching  in the $x_1$-direction.
%$(x_1,\bx_-)\in {\mathbb R}\times {\mathbb T}^{d-1}$. 
%In this case, the density experiences a fast spreading  in the  $x_1$-direction  due to the large advection  $A\bu\cdot \na=Au(\bx_-)\partial_{x_1}$.

%\smallskip\noindent
%\begin{rmk} A word about notations. Throughout the paper we use 
%$x_d$ as the primary hyperbolic direction and  $x_1$ as the primary  shear direction, so our notations are compatible  with classical  literature on hyperbolic and shear flows. We use $\bx_-$ to denote the remaining coordinates which are properly  \emph{rescaled}\footnote{\myr{One has to check  that the rescaled variables do not pop out extra $1/(d-1)$ factor down the text}} 
%\[
%\begin{split}
%\bxm:=\left\{\begin{array}{ll}  \dfrac{1}{d-1}(x_1,\ldots, x_{d-1}) & \textnormal{hyperbolic advection},\cr\cr
%(x_2,\ldots,x_d) & \textnormal{shear advection}, \end{array}\right.
%\end{split}
%\]
%so that both the shear and the hyperbolic velocity fields are kept incompressible.
%\end{rmk}

\medskip\noindent
We chose the stretching directions in the two cases above so that they agree  with classical  literature on hyperbolic and shear flows.
We will now consider the following two types of nonlinearities, the motivation for which we discuss in more detail below.

\medskip\noindent
\textbf{(N1) Patlak-Keller-Segel (PKS) type nonlinearity:}  $\mathcal{N}(n):= -\na\cdot(n\na(-\de)^{-1}n)$, modeling cell aggregation driven by the gradient of the concentration of a cell-excreted chemoattractant. 
%We consider  \eqref{general} with the nonlinearity 
%\[
%\mathcal{N}(n)= -\na\cdot(n\na(-\de)^{-1}n).
%\]
% In the classical context  PKS equations, this type of  nonlinearity  models cell aggregation effect  triggered by the chemical gradient. 

\medskip\noindent
\textbf{(N2) Ignition type nonlinearity:} $\mathcal{N}(n):= f(n)$, with a Lipschitz reaction function  $f\geq 0$ such that $f>0$ on $(\alpha,1)$ for some ignition temperature $\alpha\in(0,1)$, modeling combustion.
% and other reactive processes. 
%We consider \eqref{general} with   nonlinearity 
%\[
%\mathcal{N}(n)= f(n), \qquad f\in C _c^{\infty}[\al_0,1], \ 0< \al_0<1.
%\]
%The nonlinear effects in this case,  are ignited only when $n$  cross the threshold $\al_0$. 

\medskip
When $A=0$,  the non-linearity $\textbf{(N1)}$ is strong enough to enforce finite time blow-up of non-negative solutions, while $\textbf{(N2)}$ results in $\liminf_{t\to\infty} n(t,\bx)\geq 1$ locally uniformly for all initially large enough non-negative solutions (see below).
%$\pa_t n =\de n+\mathcal{N}(n)$, 
While the added large incompressible advection term $A\bu(\bx)\cdot \na n$ is ``neutral'' with respect to $L^p$-energy balance for any $p\geq 1$, the resulting fast spreading can still act against these effects of the nonlinearity 
%$\mathcal{N}(n)$ 
in both cases and result in strong damping of the uniform norm of solutions. 
We will show here that this indeed happens for the various combinations of the flows $\textbf{(A1)}, \textbf{(A2)}$ and nonlinearities $\textbf{(N1)}, \textbf{(N2)}$ in two and three dimensions, yielding singularity prevention for $\textbf{(N1)}$ and quenching for $\textbf{(N2)}$.
%, starting with those involving $\textbf{(A1)}$.

%, which  demonstrate how fast spreading of the density  regularizes the dynamics by taming the nonlinearities, leading to a  strong damping effect of the uniform norm. 
%The large time behavior is then viewed as a perturbation of the diffusive passive scalar equation
%\[
%\pa_t \rho + {\mathbf u}(\bx)\cdot \na \rho =\frac{1}{A}\de \rho.
%\]
%We organized the different examples  by the flow regimes they involve. 
 
\subsection{Hyperbolic Flow Regime}
We will first consider \eqref{general} with the flow $\textbf{(A1)}$.
%general nonlinear equations  of the form
%\begin{equation}\label{Basic_model}
% \pa_t n + A(-\bxm,x_d)\cdot \na n =\de n+\mathcal{N}(n),
%\qquad  \bx=((d-1)\bx_-,x_d) \in \rr^{d-1}\times \rr,
%\end{equation}
%with $n(0, \bx)= n_0(\bx)\ge 0$ and $d\in\{2,3\}$.
%%The equation describes the time evolution of the density, $n(t,\bx): \rr_+\times \rr^d \mapsto \rr_+$, governed by  two competing processes: (i) diffusion due to random Brownian motions; and (ii)  intrinsic nonlinear interactions with additional advection due to the the divergence-free ambient fluid flows $\mathbf{u}(\bx):=A(-\bxm,x_d)$.  
%%The general strategy for addressing its large time behavior is to rescale time,  obtaining
%Similarly to \cite{BedrossianHe16, IyerXuZlatos}, among others, 
It is natural to study this model by rescaling time to obtain
\begin{align} \label{1.1a}
\pa_t n+ (-\bxm,x_d)\cdot \na n= A^{-1}\de n+ A^{-1}\mathcal{N}(n),
%\quad \bx=(\textcolor{blue}{(d-1)}\bx_-,x_d) \in \rr^d,\, d=2,3.
\end{align}
and then consider this PDE  as a perturbation of the linear equation
\[
\pa_t \rho+(-\bxm,x_d)\cdot \na \rho=A^{-1}\de \rho
%,\quad \bx=((d-1)\bx_-,x_d), \quad d=2,3.
\]
at times $t\ll A$ (i.e., $t\ll 1$ before rescaling).
The task is now to control the size of this perturbation, to which end one can try to employ energy-type arguments, as well as to demonstrate sufficiently fast decay for the passive scalar dynamics of $\rho$ for times $t\ll A$ (both  for large enough $A$), and then iterate this process in order to encompass larger times as well. 
We will do so here for both nonlinearities  ${\bf (N1)}$ and ${\bf (N2)}$.

\medskip\noindent 
\textbf{Case $\mathbf{(A1)+(N1)}$.}
This is the $d$-dimensional Patlak-Keller-Segel model with hyperbolic flow
\begin{equation}\label{PKS}
\pa_t n+A(-\bxm,x_d)\cdot\na n =\de n - \na \cdot(n\na (-\Delta)^{-1} n), 
%\qquad n(0,\bx)=n_0(\bx), 
\qquad \bx=((d-1)\bx_-,x_d)\in\rr^d, 
%\qquad  \bx_- =\left\{\begin{array}{ll} x_1 & d=2,\\ \frac{1}{2}(x_1,x_2) & d=3.\end{array}\right.
\end{equation}
with initial data $n(0,\bx)=n_0(\bx)\geq 0$ and $d\in\{2,3\}$.
Such equations model chemotaxis phenomena of biological organisms in fluid streams, with $n$ and $c:=(-\Delta)^{-1}n$ corresponding to the densities of cells and chemoattractants released by them, respectively. The cell density is subject to diffusion and chemical-triggered aggregation, as well as advection by the underlying fluid flow.  It is also assumed here that the chemoattractant diffuses much faster  than the cells themselves, which motivates the relationship $c=(-\Delta)^{-1}n$. 
%The chemical densities are determined by the cell distributions through the Poisson equations. It is worth mentioning
We note that the \eqref{PKS} is one of many models incorporating the fluid advection effect (see, e.g., \cite{Lorz10,Lorz12,LiuLorz11,DuanLorzMarkowich10,FrancescoLorzMarkowich10,TaoWinkler,Tuval05,ChaeKangLee13,KozonoMiuraSugiyama}). 

In the absence of advection, \eqref{PKS} is the classical Patlak-Keller-Segel (PKS) model \cite{Patlak,KS}, whose variations have received considerable attention in mathematics and science. Since the literature on the subject is vast, we cannot list all of it here and instead refer the interested reader to the review \cite{Hortsmann} and some of the representative works \cite{ChildressPercus81,JagerLuckhaus92,Nagai95,Biler95,
HerreroVelazquez96,Biler06,BlanchetEJDE06,BlanchetCarrilloMasmoudi08,BlanchetCalvezCarrillo08,BlanchetCarlenCarrillo10} as well as references therein. In two spatial dimensions $d=2$, it is well-known that if the initial mass $\|n_0\|_1$ is below $8\pi$, then \eqref{PKS} with $A=0$ admits global smooth solutions. Above this threshold aggregation dominates diffusion, resulting in a Dirac-type singularity formation in finite time \cite{BlanchetEJDE06,JagerLuckhaus92}; and at the borderline, solutions with finite second moments remain regular and form Dirac masses as time approaches infinity \cite{BlanchetCarrilloMasmoudi08}.  On the other hand, when $d=3$ and $A=0$, 
%much  less is known for the equation \eqref{PKS}. Solutions which 
solutions with  $\|n_0\|_{3/2}$  small enough are known to exist globally \cite{Perthame07,CorriasPerthameZaag04}, while finite time blow-up can occur for solutions with arbitrarily small masses \cite{CorriasPerthameZaag04}.

%Recently, progresses have been made to understand the long-time behaviors of the systems \eqref{PKS} when the underlying fluid stream is non-trivial. 
The first work studying blow-up suppression in chemotaxis models by strong flows is the paper \cite{KiselevXu15} by  Kiselev-Xu, where they applied strong relaxation enhancing flows  introduced by Constantin-Kiselev-Ryzhik-Zlato\v s in  \cite{CKRZ08}  to suppress  singularity formation in \eqref{PKS} on $\mathbb T^d$ ($d\in\{2,3\}$). This was later generalized to the fractional diffusion setting in \cite{HopfRodrigo18}.  Singularity suppression in PKS models by strong shear flows on the same domains was studied by Bedrossian-He \cite{BedrossianHe16} and He \cite{He}, with $\|n_0\|_1<8\pi$ needed for positive results when $d=3$, while Iyer-Xu-Zlato\v{s} obtained similar results without the limitation on $\|n_0\|_1$ for cellular and other flows that sufficiently enhance diffusion on $\mathbb T^d$ and on $[0,1]^d$ in \cite{IyerXuZlatos}. 

These results all hold on tori, and most (except for \cite{BedrossianHe16,He}) rely on strong mixing properties of the corresponding flows.  The flows $\mathbf{(A1)}$ and $\mathbf{(A2)}$ are both very basic and classical examples of advective motions but neither has such properties (this is also the reason for the hypothesis $\|n_0\|_1<8\pi$ in \cite{BedrossianHe16} when $d=3$).  Moreover, one cannot extend the above results to unbounded domains by simple periodization because \eqref{PKS} does not have a maximum principle.  
%However, on infinite domains, one can hope to use the extra space to make up for such limitations. This was achieved 

In fact, the only prior result on $\rr^2$ is in the paper \cite{HeTadmor172} by  He-Tadmor, where the fast-splitting hyperbolic flow $\mathbf{(A1)}$ was shown to suppress blow-up in \eqref{PKS} for solutions with $\|n_0\|_1<16\pi$ as well as additional structural assumptions. 
%In the first part of this work, we exploit the spreading effect of the hyperbolic flow and show that if  $A$ is large enough, then the flow will spread the density $n$ so fast that the nonlinear aggregation effect becomes perturbative.
%As a result, the solutions stay regular for any finite time. This is the content of the following theorem.
We substantially improve here this result by removing both these limitations, and also  extend it to $\rr^3$.  In particular, the flow $\mathbf{(A1)}$ prevents finite time singularity formation for solutions with arbitrarily large masses in dimensions two and three when its amplitude $A$ is sufficiently large.  
%(CAN REMOVE $|\bx^2|n_0(\bx)\in L^1(\rr^d)$ IN THM 1.1?)

\begin{thm}[{\bf PKS + hyperbolic flow}]\label{Thm:PKS_suppression_of_blow_up}
Let $n$ solve \eqref{PKS} with  initial data $0\leq n_0\in L^1(\rr^d)\cap H^2(\rr^d)$ satisfying $|\bx^2|n_0(\bx)\in L^1(\rr^d)$ and $d\in\{2,3\}$. There is  $A_0=A_0(||n_{0}||_{L^1\cap H^2})$ such that if $A\geq A_0$, then $\sup_{t\geq 0} \|n(t,\cdot)\|_{L^\infty}<\infty$. 
%the solution of \eqref{PKS}, $n(t,\cdot)$,  is uniformly bounded in $L^\infty$ for any finite time. 
%\footnote{\textcolor{red}{I guess that in order to derive the decay estimate,% The correct formulation might be $||n||_\infty\lesssim \frac{1}{..\sinh.. t}$. It will be great to have, but I still do not know how to get. Maybe 
%we might need to use the trick in the Schrodinger equation and consider decomposing the Duhamel formula into the case $[0,t/2]$ and $[t/2,t]$ and estimate differently.}}
\end{thm}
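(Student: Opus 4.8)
The plan is to rescale time by $A$, rewriting \eqref{PKS} as
\[
\pa_t n + \bu\cdot\na n = A^{-1}\de n + A^{-1}\mathcal N(n),\qquad \bu=(-\bxm,x_d),
\]
and to treat this, on rescaled-time windows much shorter than $A$, as a perturbation of the linear passive scalar $\rho$ solving $\pa_t\rho+\bu\cdot\na\rho=A^{-1}\de\rho$ with $\rho(0)=n_0$. By the standard local theory for the parabolic--elliptic Patlak--Keller--Segel system, a unique smooth solution exists on a maximal interval and can only break down through $\|n(t)\|_{L^\infty}\to\infty$, so it suffices to produce the a priori bound $\sup_t\|n(t)\|_{L^\infty}<\infty$. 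Two facts will be used repeatedly: mass is conserved, $\|n(t)\|_{L^1}=M:=\|n_0\|_{L^1}$, and the Hardy--Littlewood--Sobolev inequality gives $\|\na c\|_{L^2}\lesssim\|n\|_{L^{2d/(d+2)}}$ for $c=(-\de)^{-1}n$. Crucially, since $\pa_t\rho+\bu\cdot\na\rho=A^{-1}\de\rho$ is a plain advection--diffusion equation with divergence-free $\bu$ and no zeroth-order term, it obeys the maximum principle, so $\|\rho(t)\|_{L^\infty}$ is non-increasing; the flow's real effect, invisible in the $L^p$ energy balance where the incompressible drift integrates to zero, enters only through this passive-scalar decay.

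First I would record the weighted bounds that license every integration by parts. Using $\na\cdot\bu=0$ one computes
\[
\frac{d}{dt}\int|\bx|^2 n\,d\bx = 2\int\Big(x_d^2-\tfrac{1}{d-1}\sum_{i<d}x_i^2\Big)n\,d\bx + 2dA^{-1}M + A^{-1}\!\int|\bx|^2\mathcal N(n)\,d\bx,
\]
where the expanding direction contributes a term controlled by $2\int|\bx|^2 n\,d\bx$, the diffusion contributes $2dA^{-1}M$, and the nonlinearity contributes a non-positive virial term (equal to $-A^{-1}M^2/(2\pi)$ when $d=2$). Gr\"onwall then bounds the second moment by an exponential in the rescaled time on each finite interval. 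Together with the spatial decay of $n$, this justifies running dominated convergence on $\mathbf 1_{|\bx|\leq R_i}\,n\,\bu\cdot\na n$ and letting $R_i\to\infty$, so that the advection term integrates to zero in all the energy identities below (in three dimensions one checks this directly, the second moment still sufficing provided $\|n\|_{L^\infty}$ is qualitatively bounded and $\int_{\pa B(0,R_i)}|\bx|\,n\,dS\to0$ along a suitable sequence $R_i\to\infty$).

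The heart of the argument is the passive-scalar step combined with iteration. Because $\bu$ contracts at rate $\tfrac{1}{d-1}$ in $\bxm$ and expands at rate $1$ in $x_d$, the Green's function of the linear equation factorizes into Ornstein--Uhlenbeck kernels: the $\bxm$-profile relaxes to a Gaussian of width $\sim A^{-1/2}$, while the $x_d$-extent grows like $e^{t}$, so that once the $\bxm$-width has saturated the height dilutes like $e^{-t}$ and both $\|\rho(t)\|_{L^\infty}$ and $\|\rho(t)\|_{L^2}$ decay by a fixed geometric factor over a window of length $T_\ast=O(\log A)\ll A$. On such a window the nonlinearity carries the small factor $A^{-1}$, so estimating $w:=n-\rho$ through
\[
\tfrac12\frac{d}{dt}\|w\|_{L^2}^2 = -A^{-1}\|\na w\|_{L^2}^2 + A^{-1}\!\int \na w\cdot n\,\na c\,d\bx
\]
(together with the Gagliardo--Nirenberg and Calder\'on--Zygmund bounds needed to absorb $\int\na w\cdot n\,\na c$ into the dissipation) shows $n$ stays within an $O(A^{-1})$ neighborhood of $\rho$. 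I would then iterate: re-basing $\rho$ to equal $n(kT_\ast)$ at the start of each window, the spreading reduces the endpoint norm by the geometric factor while the nonlinear correction only adds an $A^{-1}$-small amount, so for $A\geq A_0(\|n_0\|_{L^1\cap H^2})$ the sequence of window-endpoint norms stays bounded for all time.

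Finally I would feed the resulting uniform control of $\|n(t)\|_{L^2}$ into the Moser--Alikakos iteration to upgrade it to $\sup_t\|n(t)\|_{L^\infty}<\infty$, which is the assertion; the same scheme runs in $d=2$ and $d=3$ because it never relies on a subcriticality threshold for the mass (which is conserved and may be large), only on the flow-driven spreading. I expect the main obstacle to be exactly the closing of the iteration: the full equation has no comparison principle, so the passive-scalar decay cannot simply be transferred to $n$, and one must show that the dilution gained over each window strictly dominates the accumulating nonlinear corrections, uniformly in the number of windows. This is where the largeness of $A$ relative to the data is essential, and where the three-dimensional case, whose nonlinearity is more singular, requires the most care.
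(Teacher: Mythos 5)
Your proposal is correct and follows essentially the same route as the paper's proof: time rescaling, an iteration over windows of length $O(\log A)$ in which $n$ is compared in $L^2$ to the passive scalar re-based at the window's start, the explicit hyperbolic (Ornstein--Uhlenbeck--type) kernel providing the $L^1\to L^\infty$ decay of the linear flow, second-moment bounds to justify that the advection term integrates to zero, and Moser--Alikakos iteration to upgrade the resulting uniform $L^2$ bound to $L^\infty$. The only quantitative imprecision is that the nonlinear correction per window is $O(\sqrt{\log A/A})$ in $L^2$ rather than $O(A^{-1})$, which still vanishes as $A\to\infty$ and so does not affect the argument.
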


%\begin{rmk}[{\bf The 2D case}]
%The two-dimensional setup of Theorem  \ref{Thm:PKS_suppression_of_blow_up} is a substantial improvement on our earlier result  \cite{HeTadmor172}. In the latter we proved the suppression of finite-time blow-up for restricted class of initial configurations with mass below the threshold $\|n_0\|_{L^1}< 16\pi$. Here  we remove both constraints. 
%\end{rmk}

%To conclude, we address the problem of the long time asymptotics of the solutions to the Advective PKS equation in the hyperbolic regime \myb{(I am not sure about the shear flow case yet due to the lower order terms in the estimates)}. In Theorem \ref{Thm:PKS_suppression_of_blow_up}, by choosing the magnitude of the fluid flow $A_0$ large enough, one can obtain the following description of the long time behavior in 3-dimension.
We also prove asymptotic decay of solutions  in three dimensions.  
%($L^\infty$? 2D?  SHEAR?)

\begin{thm}
%[\bf{Long time behavior for 3D advection-PKS with hyperbolic flow}] 
\label{Thm:PKS_long_time_behavior}
When $d=3$, in Theorem \ref{Thm:PKS_suppression_of_blow_up} we also have $ \| n(t,\cdot)\|_{L^2}\leq t^{-1/2}$ for all $A\geq A_0$ and $t\geq 1$.  In fact, for these $A$ and $t$ we have $\int_{B(0,t^{1/4})} n(t,\bx)d\bx \leq t^{-1/8}$, showing that asymptotically in time, all the mass of the solution will escape to infinity.
%Consider the equation \eqref{PKS} subject to smooth initial data $n_0\in C_c^\infty(\rr^3)$. There exists a threshold $A_0=A_0(||n_{0}||_{L^1\cap L^\infty})$ such that if $A\geq A_0$, then the $L^2$ norm of the solutions of \eqref{PKS}, $n(t,\cdot)$, decay to zero as time approaches infinity.  
\end{thm}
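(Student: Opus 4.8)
The plan is to close a pure $L^2$ energy estimate in which the dissipation is forced to dominate the cubic aggregation term, using that the flow $\mathbf{(A1)}$ spreads mass along $x_3$ and thereby keeps the transverse ``column masses'' subcritical. Write $x_\perp=(x_1,x_2)$, $\nu(t,x_3):=\int_{\rr^2}n(t,x_\perp,x_3)\,dx_\perp$, and $c=(-\de)^{-1}n$. Multiplying \eqref{PKS} by $n$ and integrating, the incompressibility $\na\cdot\bu=0$ reduces the advection term to a boundary flux over $\pa B(0,R)$ bounded by $\tfrac A2 R\,\|n\|_{L^\infty}\int_{\pa B(0,R)}n\,dS$ (using $\abs{\bu(\bx)}\lesssim\abs{\bx}$ and that $\|n\|_{L^\infty}$ is bounded by Theorem \ref{Thm:PKS_suppression_of_blow_up}); the qualitative second-moment bound $\int\abs{\bx}^2 n\,d\bx<\infty$, propagated by a separate moment computation and finite for each $t$, then yields a sequence $R_i\to\infty$ along which $R_i\int_{\pa B(0,R_i)}n\,dS\to0$, so the flux vanishes. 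I am thus left with the identity
\[
\tfrac12\tfrac{d}{dt}\|n\|_{L^2}^2=-\|\na n\|_{L^2}^2+\tfrac12\int_{\rr^3}n^3\,d\bx.
\]

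To handle the cubic term I would slice in $x_3$ and apply the two–dimensional Gagliardo--Nirenberg inequality $\int_{\rr^2}f^3\,dx_\perp\leq C_{GN}\|\na_\perp f\|_{L^2(dx_\perp)}^2\|f\|_{L^1(dx_\perp)}$ on each slice, then integrate in $x_3$ to obtain $\int_{\rr^3}n^3\,d\bx\leq C_{GN}\,(\sup_{x_3}\nu)\,\|\na_\perp n\|_{L^2}^2$. Hence, once $\sup_{x_3}\nu<2/C_{GN}$, the identity gives genuine dissipation $\frac{d}{dt}\|n\|_{L^2}^2\leq-2c\,\|\na_\perp n\|_{L^2}^2$ for some $c>0$. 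To turn this into a rate I would use the slice-wise two–dimensional Nash inequality $\|f\|_{L^2(dx_\perp)}^2\leq C_N\|\na_\perp f\|_{L^2(dx_\perp)}\|f\|_{L^1(dx_\perp)}$, integrate in $x_3$, and apply Cauchy--Schwarz to get $\|n\|_{L^2}^4\leq C_N^2\,\|\na_\perp n\|_{L^2}^2\,W$ with $W:=\int\nu^2\,dx_3$. Setting $y:=\|n\|_{L^2}^2$, these combine into the Riccati inequality $\dot y\leq-(2c/C_N^2)\,y^2/W$, so that $y(t)\leq C_N^2 W/(2c\,t)$; choosing $A_0$ large enough that the (small) column masses make $W\leq 2c/C_N^2$ for $t\geq1$ yields $\|n(t)\|_{L^2}\leq t^{-1/2}$. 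The localized bound is then a one–line consequence of Cauchy--Schwarz, $\int_{B(0,t^{1/4})}n\,d\bx\leq\abs{B(0,t^{1/4})}^{1/2}\|n\|_{L^2}\lesssim t^{3/8}\cdot t^{-1/2}=t^{-1/8}$, with the constant again absorbed by enlarging $A_0$.

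I expect the crux to be the quantitative control of the column mass feeding the second step: showing that for $A\geq A_0$ and $t\geq1$ the slice masses $\nu(t,x_3)$ are not only subcritical (the mechanism already behind Theorem \ref{Thm:PKS_suppression_of_blow_up}, which is the three–dimensional analogue of the planar $8\pi$ criterion) but in fact small, so that $W$ can be taken below $2c/C_N^2$. The natural route is the one–dimensional transport--diffusion equation obtained by integrating \eqref{PKS} over $x_\perp$,
\[
\pa_t\nu+A\,\pa_{x_3}(x_3\nu)=\pa_{x_3}^2\nu-\pa_{x_3}\!\int_{\rr^2}n\,\pa_{x_3}c\,dx_\perp,
\]
whose stretching drift alone drives $\|\nu\|_{L^\infty}$ to decay like $e^{-At}$. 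The delicate point, where the largeness of $A$ and the uniform bound on $\|n\|_{L^\infty}$ must be used together, is to show that the nonlocal chemotactic flux $\int_{\rr^2}n\,\pa_{x_3}c\,dx_\perp$ cannot overturn this decay.
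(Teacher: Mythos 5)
Your overall architecture --- an $L^2$ energy inequality whose dissipation is made to dominate the cubic term, a Riccati inequality $\dot y\lesssim -y^2$ giving $\|n(t)\|_2\lesssim t^{-1/2}$, and Cauchy--Schwarz on $B(0,t^{1/4})$ at the end --- is the same as the paper's, and your conditional steps (slice-wise Gagliardo--Nirenberg and Nash inequalities, vanishing of the advective flux via the second moment) are correct as stated. But the argument has a genuine gap exactly where you flag it: everything hinges on the \emph{quantitative smallness}, uniformly for all $t\geq 1$, of the column masses $\nu(t,x_3)=\int_{\rr^2} n\,dx_\perp$ (you need $\sup_{x_3}\nu<2/C_{GN}$ and $W=\int\nu^2\,dx_3$ small), and this is never proved. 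Nor does it follow from anything established in Theorem \ref{Thm:PKS_suppression_of_blow_up}: that theorem and its proof give a uniform bound on $\|n\|_\infty$ and, for large $A$, smallness of $\|n(t)\|_2$, but neither controls the transverse $L^1$ quantity $\nu$ --- a profile of height $\delta$ spread over a transverse area of order $\delta^{-1}$ has $\nu\sim 1$ while $\|n\|_\infty$ and $\|n\|_2$ are both small, so no choice of $A_0$ based on the available bounds makes $W$ small. Your proposed route through the 1D equation for $\nu$ would require showing that the nonlocal flux $-\pa_{x_3}\int_{\rr^2} n\,\pa_{x_3}c\,dx_\perp$ cannot overturn the $e^{-At}$ transport decay; this is a genuinely new estimate (the flux is not signed, not local in $x_3$, and not obviously small relative to $A$ in any norm you control), and you give no argument for it. So as written the proof does not close.

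For comparison, the paper avoids slicing altogether. The iteration proving Theorem \ref{Thm:PKS_suppression_of_blow_up} actually yields more than boundedness: by \eqref{L2deviation}, \eqref{2.222}, and \eqref{goal1}, for any $\ep>0$ one has $\sup_{t\geq 10\log A}\|n(t)\|_2\leq\ep$ once $A$ is large (in the rescaled time of \eqref{PKS_rescaled}). Feeding this \emph{global} $L^2$ smallness into the three-dimensional Gagliardo--Nirenberg inequality $\|n\|_2^3\lesssim \|\na n\|_2\,\|n\|_{3/2}^2$, together with $\|n\|_{3/2}\leq M^{1/3}\|n\|_2^{2/3}$, shows that $\frac1A\|\na n\|_2^2$ dominates $\frac{C_{GN}}{A}\|n\|_2^6$ with room to spare, giving $\frac{d}{dt}\|n\|_2^2\leq -\|n\|_2^4/(\ep^{1/3}A)$ --- i.e.\ precisely your Riccati inequality, but with an input that is already available. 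If you wish to keep your slicing scheme you must supply the missing column-mass estimate; the simpler repair is to replace that input by the global smallness of $\|n\|_2$, which is what the paper does.
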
 

We note that one can obtain the same results for the flow $-\mathbf{u}$ (which equals $\mathbf{u}$ up to rotation in 2D but not in 3D), by proving an analog of  Corollary \ref{Cor:fast-decay} below for this flow.

%\begin{rmk}[{\bf The 3D case}]
%The uniform bound in the three-dimensional setup stated in Theorem  \ref{Thm:PKS_suppression_of_blow_up}  and the long-time decay estimate in Theorem \ref{Thm:PKS_long_time_behavior} can be extended for the hyperbolic flow $\mathbf{u}=(-2\x,\y,\z)$.  DO WE WANT TO SAY MORE HERE?  OR REMOVE THIS?
%\end{rmk}

\medskip
\noindent
{\bf Remark.}  In the above results, the key property of the hyperbolic flow is that its {\it dissipation time} converges to 0 as $A\to\infty$ (this is the same for the shear flows with no plateaus discussed below).  Similarly to \cite{IyerXuZlatos}, we define the {\it dissipation time} of a divergence-free (time-independent) flow ${\bf u}$ to be the smallest $\tau> 0$ (which we  denote by $\tau({\bf u})$) such that the solution operator $\mathcal S_{t}^{\bf u}$ for the PDE
\[
\pa_t \rho+{\bf u}(\bx) \cdot \na \rho=\de \rho
%,\quad \bx=((d-1)\bx_-,x_d), \quad d=2,3.
\]
satisfies $||\mathcal S_{\tau}^{\bf u}||_{L^1\to L^\infty}\leq\frac 12$ (in \cite{IyerXuZlatos}, $||\mathcal S_{\tau}^{\bf u}||_{L^2\to L^2}$ was used instead).  Our proofs of the above results then show that they hold for any divergence-free flow ${\bf u}$ in place of $(-{\bf x}_-,x_d)$ that grows at most linearly as $|\bx|\to\infty$ and satisfies Theorem \ref{thm:local_Hs_0} below (local well-posedness for \eqref{PKS}) as well as $\lim_{A\to\infty} \tau(A{\bf u}) = 0$ (the latter holds for the hyperbolic flow by Corollary \ref{Cor:fast-decay}).  Indeed, one then only needs to replace $10\log A$ by $A\tau(A{\bf u})$ in the proof of Theorem \ref{Thm:PKS_suppression_of_blow_up} (which concerns the rescaled problem \eqref{1.1a}, so this is $\tau(A{\bf u})$ before temporal scaling), because then one can use the definition of $\tau(A{\bf u})$ instead of Corollary \ref{Cor:fast-decay} to obtain the crucial estimate \eqref{2.222}.

%\medskip
\medskip\noindent 
\textbf{Case $\mathbf{(A1)+(N2)}$.}
This is the  advection-reaction-diffusion equation
\begin{equation}\label{Reaction-Diffusion}
%\left\{\begin{split}
\pa_t n+A(-\bxm,\y)\cdot \na n=\de n+ f(n), 
%\qquad n(0,\bx)=n_0(\bx), 
\qquad \bx=((d-1)\bx_-,x_d)\in\rr^d,
%\end{split}\right.
%\qquad (t,\bx)\in \rr_+\times \rr^d\,\,\, (d\in\{2,3\}).
\end{equation}
with 
%$f$ from $\mathbf{(N1)}$, 
$n(0,\bx)=n_0(\bx)\geq 0$ and $d\in\{2,3\}$.
Since the foundational papers of Kolmogorov-Petrovskii-Piskunov \cite{KPP} and Fisher \cite{Fisher},  reaction-diffusion equations have been used to model multiple phenomena, including combustion, chemical kinetics, population dynamics, morphogenesis, and nerve pulse propagation.  It would again be impossible to list here all the relevant works and we only mention a small cross section of the vast literature \cite{Turing52,Berestycki02,Fife79,Murray02,Murray03,ShigesadaKawasaki97,HodgkinHuxley52,ChoLevy17,AW, BH,BH3,GF, Weinberger,Xin00, Xin09, ZlaInhomog3d}.
 %is widely applied in the biological community to describe important phenomena. Since Alan Turing developed his theory on morphogenesis, (see, e.g., \cite{Turing52}), many important models involving reaction-diffusion equations were written to explain biological observations, ranging from biology invasion (\cite{Berestycki02},\cite{Fife79},\cite{Murray02},\cite{Murray03},\cite{ShigesadaKawasaki97}), to cardiac propagation (Hudgkin-Huxley model \cite{HodgkinHuxley52}) and cancer modeling (\cite{ChoLevy17}). 
 
 Some of the above phenomena occur inside a fluid medium, which may be subject to advection that is either imposed (passive) or generated by the reactive process (active).  This may be the case, in particular, for population dynamics in rivers and seas, as well as for various combustion phenomena such as burning in internal combustion engines or nuclear reactions in stars.  The effects of such advection on the reactive process are two-fold and in a sense opposite.
% By incorporating additional advection from the environment, we introduce extra dimensionality to the system. To understand the overall effect of the moving fluid on the biological system, the balance between two competing forces plays a vital role. 
On the one hand, the fluid flow may enhance the spread of substances to distant places and hence increase the influence range of the reactive process. On the other hand, this spreading effectively reduces density of the substance and may lessen the mean impact of the reaction, possibly up to the point of quenching it. The interplay of these two effects may have a profound impact on the reactive dynamics, particularly if the advection is strong, and this has recently been studied in many works including \cite{ABP, BHN-2, CKR00, CKRZ08, FKR, KS00, KiselevZlatos,ZlaPercol, ZlaSpeedup}. 

We consider here the effect of strong hyperbolic flows  on ignition reactions $\mathbf{(N2)}$, modeling combustion in flammable substances.  In this setting, 
%\begin{equation}\label{Reaction-Diffusion}
%%\left\{\begin{split}
%\pa_t n+A(-\bxm,\y)\cdot \na n=\de n+ f(n), 
%%\qquad n(0,\bx)=n_0(\bx), 
%\qquad \bx=((d-1)\bx_-,x_d)
%%\end{split}\right.
%%\qquad (t,\bx)\in \rr_+\times \rr^d\,\,\, (d\in\{2,3\}).
%\end{equation}
%with $n(0,\bx)=n_0(\bx)\geq 0$ and $d\in\{2,3\}$.
$n$ in \eqref{Reaction-Diffusion} is the temperature of the substance, and
%We recall the notation $\bxm=\dfrac{1}{d-1}(x_1,\ldots, x_{d-1})$. 
the reaction function $f$ models the combustive process, with burning only occurring above the ignition temperature $\alpha$.
%\begin{align}\label{reaction_f}
%f\geq 0 \qquad\text{and}\qquad {\rm supp} (f)\subseteq [\al,1].
%\end{align}
%Here $\al$ is the ignition threshold or ignition temperature. 
%That is, the combustive substance can only burn above this temperature.
While we will only assume that $n\geq 0$, often considers solutions $0\leq n\leq 1$, with 1 being the (normalized) maximal possible temperature, which motivates letting $f\equiv 0$ on $(-\infty,0]\cup [1,\infty)$.  The latter of course guarantees that $0\leq n\leq 1$ whenever $0\leq n_0\leq 1$.
%By proper rescaling, the second condition $b)$ can be guaranteed. It is classical to assume that 
%\begin{align}\label{ARD_assumption_on_n_0}
%||n_0||_\infty\leq 1.
%\end{align} 
%Since the function $f$ have support inside $[0,1)$, the property \eqref{ARD_assumption_on_n_0} is propagated along the dynamics for all time.  Once $n$ reaches the threshold $\al_0$, the flame arises. 
It is then well-known \cite{Kanel, AW} that when $A=0$, solutions that are initially greater than $\alpha+\epsilon$ on a ball with radius $R_\epsilon$ will converge locally uniformly to 1 as $t\to\infty$ (for each $\epsilon>0$ and some $R_\epsilon$).  That is, sufficiently extended initial flames will propagate.

However, this picture may change dramatically in the presence of strong flows. In fact, Constantin-Kiselev-Ryzhik showed in \cite{CKR00}   that strong shear flows profiles on $\rr\times\mathbb T$ that do not have plateaus (i.e., regions where they are constant) can quench arbitrarily large (compactly supported) initial temperatures, provided the flow amplitude is large enough (this was later extended to shear flows with small enough plateaus by Kiselev-Zlato\v s \cite{KiselevZlatos}, as well as to periodic cellular flows on $\rr\times\mathbb T$ and $\rr^2$ by Fannjiang-Kiselev-Ryzhik \cite{FKR} and Zlato\v s \cite{ZlaSpeedup}).
% as well as cellular flows with small cells on $\mathbb T^2$ and $\mathbb T^3$ by Iyer-Xu-Zlato\v s \cite{IyerXuZlatos}).
 By quenching we mean that $\|n(t_0,\cdot)\|_\infty$ drops below $\alpha$ at some time $t_0$, after which the reaction stops and the solution will only be subject to advection and diffusive decay (since it will remain below $\alpha$ forever).
Our next result is the same conclusion for the hyperbolic flows $\mathbf{(A1)}$ on $\rr^2$ and $\rr^3$ (we note that the eventual linear dynamics then decays as $e^{-At}$, which follows from rescaling the bounds in Corollary \ref{Cor:fast-decay} below).
% equation \eqref{Reaction-Diffusion}.

\begin{thm}[{\bf Reaction-diffusion + hyperbolic flow}]\label{Thm:Quenching_in_Reaction_Diffusion}
Let $n$ solve \eqref{Reaction-Diffusion}  with $f$ from $\mathbf{(N1)}$, initial data $0\leq n_0\in L^1(\rr^d)$
%\cap L^\infty(\rr^d)$ 
and $d\in\{2,3\}$. There is  $A_0=A_0(||n_0||_{L^1},f)$ such that if $A\geq A_0$, then $\lim_{t\to\infty} || n(t,\cdot)||_{L^\infty}=0$.
%there exists a time $T_0\in(0,\infty)$ so that the following estimate holds
%\begin{align}
%|| n(t)||_\infty<\al_0, \quad \forall t\geq T_0.
%\end{align}
\end{thm}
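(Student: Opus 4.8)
The plan is to bypass all moment estimates and instead reduce the nonlinear problem to the \emph{linear} passive-scalar dynamics, which by Corollary \ref{Cor:fast-decay} already spreads (and hence decays in $L^\infty$) arbitrarily fast once $A$ is large. There are two ingredients: (i) a pointwise comparison bounding the nonlinear solution by an exponential multiple of the linear one, and (ii) fast linear decay. Together they force the temperature below the ignition threshold $\al$ in finite time; after that the reaction switches off and the solution decays to $0$ under the pure advection--diffusion flow.

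First I would exploit the structure of $f$. Since $f$ is Lipschitz with $f\equiv0$ on $(-\infty,0]$, we have $f(0)=0$ and hence $0\leq f(z)\leq cz$ for all $z\geq0$, where $c:=\mathrm{Lip}(f)$ depends only on $f$. Let $\rho$ solve the linear problem $\pa_t\rho+A(-\bxm,x_d)\cdot\na\rho=\de\rho$ with $\rho(0,\cdot)=n_0\geq0$, and set $w:=e^{ct}\rho$. A direct computation gives $\pa_t w+A(-\bxm,x_d)\cdot\na w-\de w=cw\geq f(w)$, so $w$ is a supersolution of \eqref{Reaction-Diffusion} with $w(0,\cdot)=n_0=n(0,\cdot)$. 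Writing $v:=e^{-ct}(w-n)$ and subtracting the two equations, one finds $\pa_t v+A(-\bxm,x_d)\cdot\na v-\de v=e^{-ct}(cn-f(n))\geq0$ with $v(0,\cdot)=0$, so the minimum principle yields $v\geq0$, i.e.
\[
0\leq n(t,\bx)\leq e^{ct}\rho(t,\bx)\qquad\text{for all }t\geq0,\ \bx\in\rr^d.
\]

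Next I would quench using Corollary \ref{Cor:fast-decay}. Since $\rho=\mathcal S_t^{A\bu}n_0$, that corollary furnishes a time $t_0=t_0(A)$ (which may moreover be taken to satisfy $t_0\to0$ as $A\to\infty$) at which $\|\rho(t_0,\cdot)\|_{L^\infty}\leq \|\mathcal S_{t_0}^{A\bu}\|_{L^1\to L^\infty}\,\|n_0\|_{L^1}$ is as small as we wish once $A$ is large. Combined with the fact that $e^{ct_0}$ is bounded, there is $A_0=A_0(\|n_0\|_{L^1},f)$ so that for $A\geq A_0$
\[
\|n(t_0,\cdot)\|_{L^\infty}\leq e^{ct_0}\|\rho(t_0,\cdot)\|_{L^\infty}<\al.
\]
Once the temperature lies below $\al$ everywhere, $f(n)=0$ on $\{n<\al\}$, so on the maximal time interval on which $\|n\|_{L^\infty}<\al$ the function $n$ solves the \emph{linear} advection--diffusion equation; the maximum principle then keeps $\|n(t,\cdot)\|_{L^\infty}$ nonincreasing, so it never returns to $\al$ and $n$ remains a genuine solution of the linear equation for all $t\geq t_0$. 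A final application of Corollary \ref{Cor:fast-decay} gives $\lim_{t\to\infty}\|n(t,\cdot)\|_{L^\infty}=0$. The cases $d=2$ and $d=3$ are handled identically, as the argument never uses the dimension beyond the inputs above.

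The main obstacle is making the comparison step rigorous on the unbounded domain $\rr^d$: the drift $A(-\bxm,x_d)$ grows linearly in $\bx$, so the standard maximum/minimum principles for bounded coefficients do not apply verbatim, and one must either invoke a Phragm\'en--Lindel\"of-type comparison valid under controlled growth of the solutions or exploit the explicit Gaussian kernel of the linear flow. The accompanying points to check are that $v$ (equivalently $w-n$) has enough decay in $\bx$ to license the minimum principle, and the self-consistency of the quenching step, namely that $\|n(t,\cdot)\|_{L^\infty}<\al$ propagates forward in time; both follow once local well-posedness and the a priori bounds underlying Corollary \ref{Cor:fast-decay} are in hand.
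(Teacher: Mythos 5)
Your proposal is correct and follows essentially the same route as the paper's own proof: bound the nonlinear solution by $e^{ct}$ times the linear passive-scalar solution via the comparison principle (the paper uses $e^{\beta t/A}\rho(t)$ in rescaled time, which is the same bound), quench below the ignition temperature $\alpha$ using the fast linear decay of Corollary \ref{Cor:fast-decay}, and then conclude from the purely linear dynamics afterwards. The only differences are cosmetic --- you work in unscaled time and spell out the supersolution computation and the Phragm\'en--Lindel\"of caveat for the linearly growing drift, which the paper subsumes under a single invocation of the comparison principle.
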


%\begin{rmk}
%After time $T_0$, the reaction term vanishes and the solution evolves according to the advection-diffusion equation. Combining  Corollary \ref{Cor:fast-decay} and time rescaling yields the fast decay
%\begin{align}
%||n(T_0+t)||_{L^\infty(\rr^2)}\leq \frac{1}{\frac{1}{A}\sinh (At)}||n(T_0)||_{L^1(\rr^2)},\quad \forall t\geq 0;\\
%||n(T_0+t)||_{L^\infty(\rr^3)}\leq\frac{1}{\sqrt{\frac{e^{2At}-1}{2A}}\frac{1-e^{-At}}{A}}||n(T_0)||_{L^1(\rr^3)},\quad \forall t\geq 0.
%\end{align}
%\end{rmk}

\subsection{Shear Flow Regime}
We next turn to \eqref{general} with the flow $\textbf{(A2)}$.  Our study of the PKS model with this type of advection  is  inspired by the above mentioned papers \cite{CKR00,KiselevZlatos} for advection-reaction-diffusion equations.  Similarly to them, we will consider here the PDE in the infinite channel $\rr\times\mathbb{T}^{d-1}$, and in its analysis we will not use the temporal scaling by factor $A$ that we applied in the case of the flow $\textbf{(A1)}$.

%begin with the  the shear advection regime coupled with PKS nonlinearity.

\medskip\noindent 
\textbf{Case $\mathbf{(A2)+(N1)}$.} 
This is the $d$-dimensional Patlak-Keller-Segel model with shear flow
%In the second half of the paper, we consider the advective PKS equation in the infinite long channel $\rr\times\mathbb{T}$
\begin{equation}\label{eq:PKS_Shear}
\pa_t n+Au (\bx_-)\pa_{x_1} n=\de n - \na \cdot(n\na(-\Delta)^{-1} n), 
%\qquad n(0,\bx)=n_0(\bx), 
\qquad \bx=(x_1,\bx_-)\in \rr\times \mathbb{T}^{d-1},
%,\,\, (d\in\{2,3\}).
\end{equation}
%subject to $n(t=0,\bx)=n_0(\bx)$.
with $n(0,\bx)=n_0(\bx)\geq 0$ and $d\in\{2,3\}$.
Enhanced dissipation effects of shear flows on  $\mathbb{T}^2$ and $\mathbb{T}\times \rr$ (i.e., with closed streamlines), as well as their impacts on hydrodynamic stability of these flows, have been studied extensively in recent years (see, e.g.,  \cite{BMV14,BGM15I,BGM15II,BGM15III,BVW16, BCZ15,ElgindiCotiZelatiDelgadino18,Wei18,FengIyer19} and references therein). 
 The effects are caused by advection-induced phase mixing, which sends information to higher and higher Fourier modes and thereby amplifies diffusion.  
 
 On $\rr\times \mathbb{T}^{d-1}$, the role of mixing by shear flows is instead replaced by spreading \cite{CKR00,KiselevZlatos}.
% 
% However, if we consider the unbounded domain $(x_1,x_2)\in \rr\times\mathbb{T}$ or the free space $\rr^2$, the mixing effect of the shear-advection is extremely slow, so we do not expect that the mixing-induced enhanced dissipation will be strong enough to secure global smooth solution.
%Fortunately, we are able to utilize here another mechanism  which secures global smooth solutions on unbounded domains, and proved fruitful for the problem of  shear flows coupled with ignition, \cite{CKR00}. Specifically, here we are able to apply the techniques developed for the Advection Reaction-Diffusion equations $\mathbf{(A2)+(N2)}$ to derive the suppression of blow-up result. To this end, we  recall a key observation of   \cite{CKR00} --- namely,  that if the plateau, $P_{2D}$, of shear flow profile $u$ is an empty set,
In both cases, however, plateaus of the flows stand in the way of mixing/spreading by these flows, even when their amplitudes are large.  We therefore assume, as in \cite{CKR00}, that $u\in C^\infty(\mathbb T^{d-1})$ and the set
\begin{align}\label{def_P}
P_{u}:= \big\{x_2\in\mathbb T \, \big|\, 0=u'(x_2)=u''(x_2)=... \big\}
\end{align}
in the case $d=2$ is empty, with the corresponding assumption for $d=3$ being
 \begin{align}
P_{u}:=
%\bigcap_{\substack{\al\in\mathbb{N},\beta\in \mathbb{N}\\
%\al+\beta\geq 1}}
 \big\{(x_2,x_3)\in\mathbb T^2 \, \big|\, \pa_{x_2}^\al\pa_{x_3}^{\beta}u(x_2,x_3)=0 \text{ for all $(\al,\beta)\in (\mathbb{N}\cup\{0\})^2\setminus \{(0,0)\}$} \big\}. \label{Plateau-3D}
\end{align}
These will ensure fast decay of solutions to the associated linear PDE 
%This will ensure that when $A$ is large, $\|\rho(t,\cdot)\|_{L^\infty}$ of the solution to the passive scalar,  
\[
\pa_t \rho+Au(\bxm)\pa_{x_1}\rho=\de \rho
%,\qquad \bx=(x_1,\bx_-), \quad d=2,
\]
(see Lemma \ref{L.3.1x} below), which will ultimately allow us to obtain our main result for shear flows.

\begin{thm}[{\bf PKS + shear flow}]\label{Thm:PKS_suppression_of_blow_up_TR}
Let $n$ solve \eqref{eq:PKS_Shear} with  initial data $0\leq n_0\in L^1(\rr\times \mathbb T^{d-1})\cap H^2(\rr\times \mathbb T^{d-1})$ 
%satisfying $|\bx^2|n_0(\bx)\in L^1(\rr\times \mathbb T^{d-1})$ 
and $d\in\{2,3\}$. If the  flow profile $u\in C^\infty(\mathbb T^{d-1})$ has no plateaus (i.e., $P_{u}=\emptyset$), then there is $A_0=A_0(||n_{0}||_{L^1\cap H^2},u)$ such that if $A\geq A_0$, then $\sup_{t\geq 0} \|n(t,\cdot)\|_{L^\infty}<\infty$. 
\end{thm}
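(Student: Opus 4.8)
The plan is to mirror the hyperbolic case of Theorem~\ref{Thm:PKS_suppression_of_blow_up}: reduce global regularity to a uniform-in-time a~priori bound on $\|n(t,\cdot)\|_{L^\infty}$, and obtain this bound from the fast decay of the linear shear-flow semigroup in Lemma~\ref{L.3.1x}, with the aggregation nonlinearity treated perturbatively on short time intervals. By the local well-posedness result (Theorem~\ref{thm:local_Hs_0}) the solution exists on a maximal interval $[0,T^*)$ in $H^2$, and the continuation criterion for \eqref{eq:PKS_Shear} lets us conclude $T^*=\infty$ once we control $\|n(t,\cdot)\|_{L^\infty}$ on $[0,T^*)$. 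Two structural facts drive the scheme. First, mass is conserved, $\|n(t,\cdot)\|_{L^1}=\|n_0\|_{L^1}=:M$, because every term on the right of \eqref{eq:PKS_Shear} is in divergence form. Second, the advection is neutral in every $L^p$-energy identity, since $u=u(\bx_-)$ is independent of $x_1$; e.g.\ $\int n\,u(\bx_-)\pa_{x_1}n\,d\bx=\tfrac12\int u(\bx_-)\pa_{x_1}(n^2)\,d\bx=0$, so the $L^2$-identity is $\frac{d}{dt}\|n\|_{L^2}^2=-2\|\na n\|_{L^2}^2+\int n^3\,d\bx$ and the amplitude $A$ does not appear. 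Hence $A$ can act only through the linear semigroup.

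The core is a short-interval iteration controlling $\|n\|_{L^2}$ (and, when $d=3$, the critical norm $\|n\|_{L^{3/2}}$). Since $u$ has no plateaus, Lemma~\ref{L.3.1x} furnishes a dissipation time $\tau=\tau(Au)$, with $\tau\to0$ as $A\to\infty$, such that $\|\mathcal S^{Au}_\tau\|_{L^1\to L^\infty}\le\tfrac12$. Partitioning $[0,T^*)$ into intervals $I_j=[t_j,t_{j+1}]$ with $t_j=j\tau$, I would use the Duhamel representation from $t_j$,
\[
n(t)=\mathcal S^{Au}_{t-t_j}n(t_j)-\int_{t_j}^{t}\mathcal S^{Au}_{t-s}\,\na\cdot\!\big(n(s)\na c(s)\big)\,ds,\qquad c=(-\de)^{-1}n .
\]
The decisive point is that the linear term is bounded in terms of the conserved mass alone: interpolating $L^2$ between $L^1$ and $L^\infty$,
\[
\big\|\mathcal S^{Au}_{\tau}n(t_j)\big\|_{L^2}\le\big\|n(t_j)\big\|_{L^1}^{1/2}\big\|\mathcal S^{Au}_{\tau}n(t_j)\big\|_{L^\infty}^{1/2}\le M\,\big\|\mathcal S^{Au}_{\tau}\big\|_{L^1\to L^\infty}^{1/2}\le \tfrac{M}{\sqrt2},
\]
independently of the (possibly enormous) $\|n(t_j)\|_{L^2}$. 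For the nonlinear term I would combine the heat-type smoothing $\|\mathcal S^{Au}_\sigma\na\cdot F\|_{L^2}\lesssim\sigma^{-3/4}\|F\|_{L^{4/3}}$ (valid with an $A$-independent constant because the divergence-free drift is neutral in the underlying energy estimates) with the Calder\'on--Zygmund bound $\|\na c\|_{L^4}\lesssim\|n\|_{L^{4/3}}$ and interpolation $\|n\|_{L^{4/3}}\le M^{1/2}\|n\|_{L^2}^{1/2}$, giving for $F=n\na c$ the bound $\|F\|_{L^{4/3}}\lesssim M^{1/2}\|n\|_{L^2}^{3/2}$ and hence a Duhamel contribution $\lesssim M^{1/2}(\sup_{I_j}\|n\|_{L^2})^{3/2}\tau^{1/4}$. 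Taking $A$ large makes $\tau$ small and this contribution negligible, so the endpoint values $\|n(t_j)\|_{L^2}$ stay bounded by a multiple of $M$ (after the first interval, whose treatment also uses local well-posedness to keep $\|n\|_{L^2}\le2\|n_0\|_{L^2}$ on $[0,\tau]$). A continuity/bootstrap argument then yields $\sup_{t}\|n(t,\cdot)\|_{L^2}<\infty$.

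A Moser--Alikakos iteration upgrades this uniform $L^2$ (resp.\ $L^{3/2}$) control to the desired bound on $\|n(t,\cdot)\|_{L^\infty}$, closing the continuation criterion. In three dimensions the nonlinearity is energy-supercritical, so the $L^2$-step is replaced by control of the scaling-critical $\|n\|_{L^{3/2}}$ with the Sobolev exponents above adjusted accordingly, and one invokes the three-dimensional form of Lemma~\ref{L.3.1x}, which is why the plateau hypothesis must be strengthened to \eqref{Plateau-3D}. The threshold $A_0$ is fixed by requiring $\tau(Au)$ small enough that the Duhamel contributions are beaten by the factor $\tfrac{1}{\sqrt2}<1$ gained from the mass bound; tracing the dependencies, $A_0$ depends only on $\|n_0\|_{L^1\cap H^2}$ and $u$, as asserted.

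The main obstacle is the uniform-in-time nonlinear closure for arbitrarily large mass. Because $M$ may exceed every blow-up threshold, the bare energy inequality $\frac{d}{dt}\|n\|_{L^2}^2=-2\|\na n\|_{L^2}^2+\int n^3$ cannot close by itself --- its right-hand side can be positive --- so the entire gain must come from the linear contraction of Lemma~\ref{L.3.1x}, and the delicate issue is to verify that the parabolic smoothing estimates for the variable-coefficient operator $\mathcal S^{Au}$ hold with constants independent of $A$, which is exactly what renders the Duhamel term $O(\tau^{1/4})$ rather than something growing with the drift. A secondary technical point, absent on tori, is the correct definition and mapping properties of the Newtonian potential $(-\de)^{-1}$ on the mixed geometry $\rr\times\mathbb{T}^{d-1}$, so that the chemoattractant gradient $\na c$ is controlled by norms of $n$ already bounded by the scheme.
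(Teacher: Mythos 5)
Your overall scheme (short-interval iteration, resetting the $L^2$ norm via the mass bound and the fast linear decay of Lemma \ref{L.3.1x}, then a Moser--Alikakos upgrade to $L^\infty$) is the same as the paper's, but your implementation of the perturbative step has a genuine gap. You run Duhamel from $t_j$ and estimate the nonlinear contribution using the smoothing bound $\|\mathcal{S}^{Au}_\sigma \na\cdot F\|_{L^2}\lesssim \sigma^{-3/4}\|F\|_{L^{4/3}}$, asserted to hold with an $A$-independent constant ``because the divergence-free drift is neutral in the underlying energy estimates.'' That justification is not valid: incompressibility of the drift gives $A$-independent $L^p\to L^p$ contraction and Nash-type $L^1\to L^\infty$ bounds for $\mathcal{S}^{Au}_\sigma$ itself, but it gives nothing for derivative bounds. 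By duality your estimate is exactly a gradient bound $\|\na (\mathcal{S}^{Au}_\sigma)^* g\|_{L^4}\lesssim\sigma^{-3/4}\|g\|_{L^2}$, and gradients do see the drift: the commutator $[\pa_{x_2},Au(x_2)\pa_{x_1}]=Au'(x_2)\pa_{x_1}$ is of size $A$, and transient gradient amplification under shearing is precisely the mechanism behind enhanced dissipation. Whether such bounds hold uniformly in $A$ for the flows at hand (in particular near critical points of $u$, which are allowed since only plateaus are excluded) is a nontrivial question, and your scheme needs it uniformly in the regime $\sigma\leq\tau(Au)\to 0$, $A\to\infty$. The paper sidesteps this entirely: it compares $n$ with the linear solution $\rho$ started from $n(t_\star)$ by an \emph{energy} estimate for $n-\rho$, in which the advection terms cancel exactly, so no derivative bounds on the semigroup are ever needed; moreover the interval length $\tau_\star$ is fixed independent of $A$, and all the smallness comes from the factor $C_{\tau_\star}/A$ in \eqref{fast-decay_shear}.

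There is a second, independent error: on $\rr\times\mathbb{T}^{d-1}$ the bound $\|\na c\|_{L^4}\lesssim\|n\|_{L^{4/3}}$ is false whenever $M=\|n\|_1\neq 0$. Averaging in $\bx_-$ gives $-\overline{c}\,''=\overline{n}$ on $\rr$, so $\overline{c}\,'(+\infty)-\overline{c}\,'(-\infty)=-M$; hence $|\na c|$ is bounded below near at least one end of the channel and $\na c\notin L^p$ for any $p<\infty$. This quasi-one-dimensional obstruction is exactly why the paper proves Lemma \ref{L.B.1x}, splitting $c=\overline{c}+(c-\overline{c})$ to get $\|\na c\|_{\infty}\leq C_d(\|n\|_1+\|n\|_{d+1})$, and then estimates the nonlinear term by $\|n\|_2^2\|\na c\|_\infty^2$. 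You flagged the potential theory on $\rr\times\mathbb{T}^{d-1}$ as a ``secondary technical point,'' but your main Duhamel estimate relies on the false form of it, so this must be repaired, not deferred. A minor further point: the detour through the critical $L^{3/2}$ norm in 3D is unnecessary --- the paper's $L^2$ scheme closes in 3D as well, via $\frac{d}{dt}\|n\|_2^2\leq C_{GN}\|n\|_2^6$ on short intervals, and this is also what the appendix's Moser iteration (Theorem \ref{thm:Local_shear}) takes as input.
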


%
%In the 3-dimensional case, we consider the following equation:
%\begin{equation}\label{eq:PKS_Shear_3D}
%\left\{
%\begin{split}
%\pa_t n+Au (x_2,x_3)\pa_{x_1} n&=\de n - \na \cdot(n\na(-\Delta)^{-1} n),\\
%n(t=0,\bx)&=n_0(\bx),
%\end{split}
%\right. \quad \quad \bx:=(x_1,x_2,x_3)\in \rr\times \mathbb{T}^2.
%\end{equation}
%We define the corresponding 3D Plateau,
%\begin{align}
%P_{3D}:=\bigcap_{\substack
%{\al\in\mathbb{N},\beta\in \mathbb{N}\\
%\al+\beta\geq 1}}\{(x_2,x_3)|\pa_{x_2}^\al\pa_{x_3}^{\beta}u(x_2,x_3)=0\}.\label{Plateau-3D}
%\end{align}
%\begin{thm}[3D Shear Flow Regime]\label{Thm:PKS_suppression_of_blow_up_T2R}
%Consider the equation \eqref{eq:PKS_Shear_3D} subject to smooth initial data $n_0\in C_c^\infty(\rr\times \mathbb{T}^2)$. Further assume that the shear flow profile has no plateau, i.e., $P_{3D}=\emptyset$. Then there exists a threshold $A_0=A_0(||n_{0}||_{L^1\cap L^\infty}, u)$ such that if $A\geq A_0$, then the solution of \eqref{eq:PKS_Shear_3D}, $n(t,\cdot)$, is uniformly bounded in $L^\infty$ for all finite time.
%\end{thm}

We recall that, as mentioned above, shear flows $\textbf{(A2)}$ on the 3D domains $\mathbb{T}^3$ (as well as $\mathbb{T}\times \rr^2$) can only prevent blow-up of solutions with $\|n_0\|_1<8\pi$ \cite{BedrossianHe16}.  Theorem \ref{Thm:PKS_suppression_of_blow_up_TR} shows that such a limitation is not present in our fast spreading scenario of strong shear flows  $\textbf{(A2)}$ on $\rr\times \mathbb{T}^2$.

We also note that this result easily extends to infinite cylinders $\rr\times\Omega$ in place of $\rr\times \mathbb{T}^2$, with  smooth bounded domains $\Omega\subseteq \rr^{d-1}$ and Neumann boundary conditions for $n$ and $c$.

\medskip \noindent
{\bf Case $\mathbf{(A2)+(N2)}$}. 
This model was introduced  in \cite{CKR00}, where the authors proved that
%The 2D reaction-diffusion equation governed shear-advection with ignition
%\begin{align}\label{Advection-Reaction-Diffusion-shear}
%\pa_t n+Au(\y) \pa_{\x}n=\de n+ f(n), \quad n(t,\bx): \rr_+\times \rr\times \mathbb{T} \mapsto \rr_+,
%\end{align}
%was introduced in  \cite{CKR00}. It was shown 
shear flows with no plateaus in two dimensions do quench solutions with arbitrarily large $L^1(\rr\times\mathbb T)$ initial data (this result can be easily extended to three dimensions, see Lemma \ref{L.3.1x} and the proof of Theorem~\ref{Thm:Quenching_in_Reaction_Diffusion} below).  That of course settles this case in our analysis.  We also note that a sharp condition for quenching arbitrarily large initial data by a given shear flow profile $u$ on $\rr\times\mathbb T$ (with a sufficiently large amplitude $A$), which allows $P_{u}$ to contain any intervals shorter than a specific $f$-dependent threshold,  was obtained  in \cite{KiselevZlatos}.  

\medskip 
The paper is organized as follows. In Section \ref{Sec:Hyper}, we study the hyperbolic flow regime and prove Theorems \ref{Thm:PKS_suppression_of_blow_up}, \ref{Thm:PKS_long_time_behavior}, and \ref{Thm:Quenching_in_Reaction_Diffusion}.  In Section \ref{Sec:Shear} we consider the shear flow regime and prove Theorem \ref{Thm:PKS_suppression_of_blow_up_TR}.  We prove the relevant well-posedness and  regularity results in the appendix.
%: in the subsection \ref{Sec:Green's function}, we derive the Green's function of the passive scalar equation associated with the hyperbolic flows; in subsection \ref{Sec:PKS}, we prove Theorem \ref{Thm:PKS_suppression_of_blow_up}; 
% in subsection \ref{Sec:ARD}, we prove the quenching in Advection-Reaction-Diffusion equation. Then we move on to the shear flow regime in Section \ref{Sec:Shear}. Here we focus on the application of the shear flow quenching in the advective PKS equations in subsection \ref{Sec:PKS_Shear}. 

\section{Hyperbolic Flow Regime}\label{Sec:Hyper}
\subsection{Estimates on the Linear Evolution}\label{Sec:Green's function} 
Let us first derive the formula for the Green's function of the passive scalar equation (with  time re-scaled by $A$ relative to \eqref{PKS})
\begin{equation}\label{EQ:PS}
\pa_t \rho+(-\bxm,x_d)\cdot \na \rho= A^{-1} \de \rho, \qquad \bx=((d-1)\bx_-,x_d) \in \rr^d,
\end{equation}
with $\rho(0,\bx) =\rho_0(\bx)$ and $d\in\{2,3\}$.
%The time is re-scaled here relative to \eqref{PKS}, and we then
We have the following lemma.

\begin{lem}\label{lem:Green}
Solutions to the diffusive passive scalar PDE \eqref{EQ:PS} have the following form.

\noindent
(i) In the 2D case, with $\bx=(x_1,x_2)\in \rr^2$:
\[
\rho(t,\bx)=\frac{A}{4\pi  \sinh {t}}\iint  \exp\left\{-\frac{|x_2e^{-t}- x_2'|^2}{2 A^{-1} (1-e^{-2t})}-\frac{|x_1e^t-x_1'|^2}{2 A^{-1} (e^{2t}-1)}
\right\} \rho_{0}(\bx')d\bx' .
\]

\noindent
(ii) In the 3D case, with $\bx=(x_1,x_2,x_3) \in \rr^3$:
\[
\rho(t,\bx)=\frac{A^{3/2}}{2^{5/2} \pi^{3/2}  \sqrt{e^{2t}-1\,} (1-e^{-t}) } \iint  \exp\left\{-\frac{|x_3e^{-t}-x_3'|^2}{2 A^{-1} ({1-e^{-2t}})} -\frac{|(x_1,x_2) e^{t/2}-(x_1',x_2')|^2}{4 A^{-1} (e^{t}-1)}
\right\} \rho_{0}(\bx')d\bx'.
\]
%$%Checking purpose:
\ifx
\begin{align} \rho(t,\bx_-,x_3)=\frac{1}{(4\pi)^{3/2}\sqrt{\frac{e^{2t}-1}{2A} }\frac{1-e^{-t}}{A}}\iint  \exp\left\{-\frac{|x_3'-x_3e^{-t}|^2}{4\frac{1}{A}\big(\frac{1-e^{-2t}}{2}\big)}-\frac{|\bx_- e^{t/2}-\bx'_-|^2}{\frac{4}{A}(e^{t}-1)}
\right\} \rho_{0}(\bx')d\bx'.
\end{align}
\fi
\end{lem}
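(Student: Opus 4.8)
The plan is to solve the passive scalar equation \eqref{EQ:PS} by exploiting the fact that the hyperbolic drift is linear in $\bx$, so the PDE has an explicit Gaussian Green's function that can be constructed either via Fourier analysis or by the method of characteristics. The cleanest route is to reduce to a standard heat kernel by straightening the characteristics of the drift field. First I would write the drift in each coordinate separately: in 2D the field is $(-x_1,x_2)$, so the characteristics satisfy $\dot X_1=-X_1$ and $\dot X_2=X_2$, giving $X_1(t)=x_1' e^{t}$ (expanding) and $X_2(t)=x_2' e^{-t}$ (contracting) — i.e. stretching in $x_2$ and compression in $x_1$. In 3D the field $(-\tfrac12 x_1,-\tfrac12 x_2, x_3)$ gives the analogous factors $e^{t/2}$ in the $(x_1,x_2)$ plane and $e^{-t}$ in $x_3$.

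The main computation is to track how the diffusion interacts with this time-dependent stretching. The plan is to make the change of variables $\bx\mapsto \bx e^{\mp t \cdot}$ (coordinate-dependent) to remove the drift; the constant-coefficient Laplacian then picks up time-dependent diffusion coefficients because the Jacobian of the stretching map rescales the second derivatives. Concretely, in the compressed coordinate the effective diffusivity acquires a factor $e^{2t}$ and in the stretched coordinate a factor $e^{-2t}$ (or the half-rate analogues in 3D). Solving the resulting decoupled one-dimensional heat equations with time-dependent coefficients gives Gaussian kernels whose variances are the time integrals of these effective diffusivities: $\int_0^t A^{-1}e^{2s}\,ds=\tfrac{1}{2}A^{-1}(e^{2t}-1)$ for the expanding direction and $\int_0^t A^{-1}e^{-2s}\,ds=\tfrac12 A^{-1}(1-e^{-2t})$ for the contracting one. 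These are exactly the variances appearing in the exponents of the stated kernels. The normalizing prefactors then follow from the product of the one-dimensional Gaussian normalizations together with the Jacobian factors $e^{\mp t}$ from undoing the change of variables; matching these yields $\frac{A}{4\pi \sinh t}$ in 2D and the corresponding $A^{3/2}$-prefactor in 3D.

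I expect the main obstacle to be purely bookkeeping: correctly tracking the factors of $2$ and the powers of $e^t$ through the change of variables, the Jacobians, and the prefactor normalization — especially in the 3D case where the $(x_1,x_2)$-block stretches at rate $\tfrac12$ while $x_3$ contracts at rate $1$, producing the asymmetric exponents $\frac{|(x_1,x_2)e^{t/2}-(x_1',x_2')|^2}{4A^{-1}(e^t-1)}$ versus $\frac{|x_3e^{-t}-x_3'|^2}{2A^{-1}(1-e^{-2t})}$. An alternative, perhaps more transparent verification is to work on the Fourier side: since the drift is $\mathbf{u}\cdot\na = -x_1\pa_{x_1}+x_2\pa_{x_2}$ (in 2D), its action on plane waves transports frequencies as $\hat\xi_j(t)=\xi_j e^{\pm t}$, and the solution of the transport-diffusion equation for $\hat\rho$ is $\hat\rho(t,\xi)=\exp\{-A^{-1}\int_0^t |\xi(s)|^2 ds\}\,\hat\rho_0(\xi(0))$; inverting this transform reproduces the Gaussian kernel. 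Either way, once the variances and normalizations are pinned down, verifying that the stated formula solves \eqref{EQ:PS} with the correct initial data $\rho(0,\bx)=\rho_0(\bx)$ reduces to checking that the kernel converges to a Dirac mass as $t\to 0^+$, which is immediate from the variances vanishing like $t$ and the prefactors blowing up at the matching rate.
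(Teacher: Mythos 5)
Your proposal is correct and takes essentially the same route as the paper: both straighten the hyperbolic characteristics to reduce \eqref{EQ:PS} to heat equations (the paper does this coordinate-by-coordinate after tensorizing the data, rescaling time so the diffusivity becomes constant, while you keep time-dependent diffusivities and integrate them), and both arrive at exactly the Gaussian widths $\tfrac12 A^{-1}(e^{2t}-1)$ and $\tfrac12 A^{-1}(1-e^{-2t})$ (resp.\ the $e^{t/2}$, $e^{-t}$ analogues in 3D) and the stated prefactors. One cosmetic slip: solving $\dot X_1=-X_1$ forward in time gives $X_1(t)=x_1'e^{-t}$, not $x_1'e^{t}$; the factors $e^{t}$, $e^{-t}$ you actually use are the backward-flow (straightening) variables, which is what the argument needs, so nothing breaks.
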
 
\begin{proof}
\noindent
We begin with the 2D case. 
Since the vector field $(-x_1,x_2)$ is divergence-free, the equation \eqref{EQ:PS} can be rewritten as
\begin{align}
\pa_t \rho+\na\cdot((-x_1,x_2) \rho)= A^{-1} \de \rho.
%,\qquad (x_1,x_2) \in \rr^2.
\label{EQ:PS_div}
\end{align}
By direct calculation we observe that  \eqref{EQ:PS_div} respects the tensor product structure of the solution, i.e., if the initial data is of the form $\rho_0(x_1,x_2)=\rho_{1;0}(x_1)\rho_{2;0}(x_2)$, and $\rho_1(t,x_1)$ and $\rho_2(t,x_1)$ solve the following equations
\begin{align}
\pa_t\rho_1(t,x_1)-\pa_{x_1}(x_1\rho_1(t,x_1))= A^{-1} \de_{x_1}\rho_1(t,x_1),\qquad \rho_1(0,x_1)=\rho_{1;0}(x_1),\label{x_eq}\\
\pa_t\rho_2(t,x_2)+\pa_{x_2}(x_2\rho_2(t,x_2))= A^{-1}  \de_{x_2}\rho_2(t,x_2),\qquad \rho_2(0,x_2)=\rho_{2;0}(x_2),\label{y_eq}
\end{align}
then $\rho(t,x_1)=\rho_1(t,x_1)\rho_2(t,x_2)$ is a solution to the equation \eqref{EQ:PS}.

Next we solve the equations \eqref{x_eq} and \eqref{y_eq} explicitly.  The change of variables
\begin{align}\label{Change_of_Coordinate_y}
\rho_2(t,x_2)=: e^{-t} N\left(\tau,X_2 \right),\qquad (\tau,X_2)= \left( \frac{1-e^{-2t}}{2},\frac{x_2}{e^t} \right)
\end{align}
transforms \eqref{y_eq} into the heat equation 
\[
\pa_\tau N= A^{-1} \de_{X_2} N,\qquad N(0,X_2)=\rho_{2;0}(x_2),
\] 
which  is solved by
 \[N(\tau,X_2)=\frac{1}{\sqrt{ {4\pi} A^{-1} \tau}}\int_\rr\exp\left\{-\frac{|X_2-x_2'|^2}{4 A^{-1} \tau}\right\}\rho_{2;0}(x_2')dx_2'.
 \] 
From this and \eqref{Change_of_Coordinate_y} we have
\begin{align}\label{rho2}
\rho_2(t,x_2)=\frac{1}{\sqrt{  2 \pi A^{-1} (e^{2t}-1)}} \int_\rr \exp\left\{-\frac{|x_2e^{-t}-x_2'|^2}{2  A^{-1} (1-e^{-2t})}\right\}\rho_{2;0}(x_2')dx_2'.
\end{align}
Similarly,  the change of variables 
\[
\rho_1(t,x_1)=e^t M(\tau,X_1),\qquad (\tau,X_1)= \left( \frac{e^{2t}-1}{2},x_1e^{t} \right)
\]
transforms \eqref{x_eq} into the heat equation 
\[
\pa_\tau M= A^{-1} \de_{X_1} M,\qquad M(0,X_1)=\rho_{1;0}(x_1).\] 
Combining its explicit solution and the change of variables formula yields 
\begin{align}\label{rho1}
\rho_1(t,x_1)=\frac{1}{ \sqrt{ 2\pi  A^{-1} (1-e^{-2t}) }}\int_\rr \exp\left\{-\frac{|x_1e^t-x_1'|^2}{2 A^{-1} (e^{2t}-1)}\right\}\rho_{1;0}(x_1')dx_1'.
\end{align}  

If the initial data is already tensorized, i.e., $\rho_{0}(x_1,x_2)=\rho_{1;0}(x_1)\rho_{2;0}(x_2)$, the solution to \eqref{EQ:PS} is obtained by multiplying \eqref{rho1} and \eqref{rho2}, i.e.,
\[
\rho(t,\bx)=\frac{1}{ 2\pi A^{-1} (e^{t}-e^{-t})}\iint_{\rr^2} \exp\left\{-\frac{|x_2e^{-t}- x_2'|^2}{2 A^{-1} (1-e^{-2t})}-\frac{|x_1e^t-x_1'|^2}{2 A^{-1} (e^{2t}-1)}
\right\} \rho_{1;0}(x_1')\rho_{2;0}(x_2')d\bx' .
\]
Since $\rho_0$ can always be decomposed as $\sum_{i=1}^\infty n_i(x_1)m_i(x_2)$ and the PDE is linear, we obtain (i).
%\begin{align}
%\rho(t,\bx)=\frac{1}{4\pi A^{-1} \sinh t}\iint  \exp\left\{-\frac{|x_2e^{-t}- x_2'|^2}{2 A^{-1} (1-e^{-2t})}-\frac{|x_1e^t-x_1'|^2}{2 A^{-1} (e^{2t}-1)} \right\} \rho_{0}(\bx')d\bx'.
%\end{align}
%This completes the proof in the two-dimensional case.

\medskip%\noindent
We now turn to the 3D case. The main difference  is the equation for $\rho_1$, which now reads
\begin{equation}\label{x-eq-3D}
%\left\{
%\begin{split}
\pa_t\rho_1(t,x_1,x_2)-\frac{1}{2}\na\cdot((x_1,x_2) \rho_1(t,x_1,x_2)) = A^{-1} \de \rho_1(t,x_1,x_2), \qquad
\rho_1(0,x_1,x_2) =\rho_{1;0}(x_1,x_2),
%\end{split}\right. \qquad (x_1,x_2)\in\rr^2.
\end{equation}
with $(x_1,x_2)\in\rr^2$.
After letting 
\begin{align*}
\rho_1(t,x_1,x_2)=:e^{t} M(\tau,\mathbf{X}),\qquad (\tau,\mathbf{X})= \left( \frac{e^{t}-1}{2},(x_1,x_2) e^{t/2} \right),
\end{align*} 
a direct calculation shows that $M$ solves the heat equation $\pa_\tau M=2 A^{-1} \de_{\mathbf{X}} M$. Hence \eqref{x-eq-3D} is solved by
\begin{align*}
\rho_1(t,x_1,x_2)=\frac{1}{4\pi A^{-1} (1-e^{-t})}\iint_{\rr^2} \exp\left\{-\frac{|(x_1,x_2) e^{t/2}-(x_1',x_2')|^2}{ 4 A^{-1}(e^{t}-1)} \right\}\rho_{1;0}(x_1',x_2')dx'_1dx_2'.
%\\%For checking purpose:
%\rho_1(t,x_1,x_2)=\frac{1}{4\pi\frac{(1-e^{-t})}{A}}\int_{\rr^2} \exp\left\{-\frac{|(x_1,x_2) e^{t/2}-\bx'_-|^2}{\frac{8}{A}(\frac{e^{t}-1}{2})}\right\}\rho_{1,0}(\bx'_-)d\bx'_-.
\end{align*} 
The rest of the proof is the same as in the 2D case, so we omit the details.
\end{proof}

Since both exponentials in the statement of Lemma \ref{lem:Green}  are less than $1$, we immediately obtain the following corollary.

\begin{cor}\label{Cor:fast-decay}
Solutions to  \eqref{EQ:PS} satisfy the  following  bounds:
\noindent
\begin{align}
(i) \ \ \mbox{In the 2D case:} \ \ \ ||\rho(t,\cdot)||_{L^\infty(\rr^2)}&\leq \frac{A}{\displaystyle \sinh t}||\rho(0,\cdot)||_{L^1(\rr^2)};\label{fast-decay}\\
(ii) \ \ \mbox{In the 3D  case:} \ \ \  ||\rho(t,\cdot)||_{L^\infty(\rr^3)}& \leq \frac{ A^{3/2}}{\displaystyle \sqrt{e^{2t}-1\,} (1-e^{-t})} ||\rho(0,\cdot)||_{L^1(\rr^3)}.&&&\label{fast-decay-3D}
\end{align}
\end{cor}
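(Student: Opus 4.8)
The plan is to read the estimates directly off the explicit Green's function representations furnished by Lemma \ref{lem:Green}, using only the elementary fact that the two Gaussian exponentials appearing there never exceed $1$. First I would note that in each dimension the solution is written as a purely time- and $A$-dependent prefactor multiplied by an integral of the form $\iint \exp\{-Q(t,\bx,\bx')\}\,\rho_0(\bx')\,d\bx'$, where $Q$ is a sum of quadratic terms with strictly positive denominators $1-e^{-2t}$, $e^{2t}-1$, $1-e^{-t}$, $e^{t}-1$ for every $t>0$. Hence $Q\geq 0$ and therefore $0\leq \exp\{-Q\}\leq 1$ uniformly in $\bx,\bx'$, which is the single observation that drives the whole argument.

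Next, for each fixed $t>0$ I would bound the integrand pointwise by the prefactor times $|\rho_0(\bx')|$ and then take the supremum over $\bx$. Because the exponential has been replaced by $1$, the remaining integral no longer depends on $\bx$, so the supremum passes harmlessly through and leaves exactly $\int_{\rr^d}|\rho_0(\bx')|\,d\bx'=\|\rho(0,\cdot)\|_{L^1}$. In the 2D case this produces $\|\rho(t,\cdot)\|_{L^\infty(\rr^2)}\leq \frac{A}{4\pi\sinh t}\,\|\rho(0,\cdot)\|_{L^1(\rr^2)}$, and in the 3D case it produces $\|\rho(t,\cdot)\|_{L^\infty(\rr^3)}\leq \frac{A^{3/2}}{2^{5/2}\pi^{3/2}\sqrt{e^{2t}-1}\,(1-e^{-t})}\,\|\rho(0,\cdot)\|_{L^1(\rr^3)}$, i.e.\ the stated bounds but carrying the sharper numerical constants coming from the normalization of the kernels.

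Finally, since $4\pi>1$ and $2^{5/2}\pi^{3/2}>1$, I would simply discard these prefactor constants to arrive at the slightly weaker but cleaner inequalities \eqref{fast-decay} and \eqref{fast-decay-3D} as stated. There is essentially no obstacle here: the proof is an immediate consequence of the explicit formulas, and the only point warranting a moment's attention is the bookkeeping of constants together with the observation that both representations (and hence both estimates) are valid only for $t>0$, the prefactors blowing up as $t\to 0^{+}$. This is consistent with the corollary being a smoothing/decay statement, which is precisely its intended use in the subsequent analysis of \eqref{PKS} and \eqref{Reaction-Diffusion}.
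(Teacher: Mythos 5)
Your proof is correct and is exactly the paper's argument: the paper derives the corollary in one line by noting that both exponentials in Lemma \ref{lem:Green} are bounded by $1$, so the kernel estimate passes directly to the $L^1\to L^\infty$ bound. Your additional bookkeeping (retaining the sharper constants $4\pi$ and $2^{5/2}\pi^{3/2}$ before discarding them, and noting the bounds hold for $t>0$) is a harmless refinement of the same approach.
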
 

%\begin{rmk}
We note that one can now use Schur's test or Young's inequality for integral operators to derive more general $L^p$--$L^q$ bounds. 
%\end{rmk}
%\begin{proof} Young's inequality for integral operators 
%The proof of \eqref{fast-decay} and \eqref{fast-decay-3D} is direct.
%\end{proof}

%%%%%%%%%%%%%%%%%%%%%%%%%%%%%%%%%%%%
\subsection{Suppression of Blow-up in the Advective PKS Model}\label{Sec:PKS}
%%%%%%%%%%%%%%%%%%%%%%%%%%%%%%%%%%%
In this section, we prove Theorem \ref{Thm:PKS_suppression_of_blow_up}. We will mainly consider the 2D case, as the 3D case can  be treated similarly.

We begin by rescaling \eqref{PKS} in time to obtain
\begin{equation}\label{PKS_rescaled}
%\left\{
%\begin{split}
\pa_t n+(-\bxm,x_d)\cdot\na n = A^{-1} \de n-  A^{-1} \na \cdot(n \na c), \qquad -\de c= n,
\qquad \bx=((d-1)\bx_-,x_d)\in \rr^d,
%\end{split}\right. 
%\quad \bx=((d-1)\bx_-,x_d)\in \rr^d,\,  d=2,3.
\end{equation}
with $n(0,\bx) =n_0(\bx)\geq 0$ and  $d\in\{2,3\}$.

\begin{proof}[Proof of Theorem \ref{Thm:PKS_suppression_of_blow_up}]

\noindent 
\textbf{The 2D case}.
Let us first summarize the ideas of the proof in the 2D case. We first observe that due to the rescaling in \eqref{PKS_rescaled}, the nonlinear aggregation effects take place on a time scale of order $A$. For $t \ll A$, the solution to \eqref{PKS_rescaled} is well-approximated by the passive scalar dynamics \eqref{EQ:PS}, which will be verified via energy estimates. The second observation, due to Corollary \ref{Cor:fast-decay}, is that the passive scalar dynamics has strong spreading effect, i.e., the $L^\infty$ norm decays on a much shorter time scale $O(\log A)\ll A$. Therefore, both the nonlinear dynamics and the passive scalar dynamics decay quickly on this shorter time scale, so there will be no mass concentration anywhere on the plane. This rules out potential singularity formation for \eqref{PKS_rescaled}. 

%Throughout the proof, we assume that $n$ solves  \eqref{PKS_rescaled} and $\rho$ solves  \eqref{EQ:PS}.
In fact, we will show that 
\begin{align}
||n(t)||_2\leq 2||n_0||_2=:2B \qquad \forall t\geq 0,\label{goal1}
\end{align}
provided the amplitude $A$ is chosen large enough (depending on  $||n_0||_{L^1\cap L^\infty}$). To this end, let us consider the following two hypotheses for some time $t_\star\geq 0$ (they obviously hold for $t_\star=0$).

\smallskip
\noindent
\textbf{Hypothesis (a):} $||n(t_\star)||_2$ is no more than $B$:
\begin{align}
||n(t_\star)||_2 \leq  B. \label{H1}
\end{align}

\noindent 
\textbf{Hypothesis (b):} $||n(t)||_2$ is not too large (and $n$ exists) on the time interval $[0,t_\star]$:
\begin{align}
||n(t)||_2\leq 2B \qquad \forall t\leq t_\star.\label{H2}
\end{align}

 Our goal is now to prove the following two conclusions, given the above hypotheses and provided ${A}$ is large enough (depending only on  $||n_0||_{L^1\cap L^\infty}$).

\smallskip
\noindent
\textbf{Conclusion (a):} $||n(t)||_2$ is not too large (and $n$ exists) on the time interval $t\in [t_\star,t_\star+10\log A]$:
\begin{align}
||n(t)||_{2}\leq& 2B \qquad \forall t\in [t_\star,t_\star+10\log A]. \label{C1}
\end{align}

\noindent
\textbf{Conclusion (b):} $||n(t_\star +10\log A)||_2$ is no more than $B$:
\begin{align}
||n(t_\star+ & 10\log A)||_2\leq B .\label{C2}
\end{align}  
%It is obvious that this then shows that $||n(t)||_2$ can never exceed $2B$ when ${A}$ is large enough, and  \eqref{goal1} follows.
This will then immediately imply \eqref{goal1} for all large enough $A$, via iteration of the above for $t_\star=0,10\log A, 20\log A,\dots$, and the result will follow by Theorem \ref{thm:Local_Hs}.
%, provided ${A}$ is large enough (depending only on  $||n_0||_{L^1\cap L^\infty}$).  
So it remains to prove \eqref{C1} and \eqref{C2} for a solution $n$ to \eqref{PKS_rescaled} satisfying \eqref{H1} and \eqref{H2}.
%, and a solution $\rho$ to  \eqref{EQ:PS} with the same initial data at $t=t_\star$, i.e., $\rho(t_\star)=n(t_\star)$. 
We note that as long as $\|n(t)\|_2$ stays uniformly bounded, the same will be true for $\|n(t)\|_\infty$ and the total mass 
\begin{align}
M:=||n_0||_{1} = ||n(t)||_{1}\label{Mass}
\end{align}
will be preserved by Theorem \ref{thm:Local_Hs}.

%By iterating the argument above on consecutive time intervals of length $100\log A$, we obtain the following uniform-in-time bound on the $L^2$ norm 
%\begin{align}
%||n(t)||_{L^2}\leq C_{L^2}(||n_0||_2)<\infty,\quad \forall t\in[0,\infty).
%\end{align}
%As a consequence of Theorem \ref{thm:Local_Hs}, this $L^2$-bound  guarantees propagation of higher regularity of the solutions. 
%This concludes the proof of Theorem \ref{Thm:PKS_suppression_of_blow_up} in the 2D-case.

\smallskip
%\noindent
\textbf{Step 1: Proof of  \eqref{C1}.}  
%To prove the regularity estimate \eqref{C1},
Let  $T_\star$ be the maximal time in  $[t_\star, t_\star+10\log A]$ such that
 \begin{align}
 \sup_{t\in[t_\star,T_\star]}||n(t)||_{2}\leq &4B\label{Bootstrap_hypothesis_4B}
\end{align}
(it exists by continuity of $||n(t)||_{2}$, see Theorems \ref{thm:local_Hs_0} and  \ref{thm:Local_Hs}).  We will then prove that in fact
\begin{align}
\sup_{t\in[t_\star,T_\star]}||n(t)||_{2}\leq &2B,\label{Bootstrap_conclusion_2B}
\end{align}  
which then shows that $T_\star$ must be $t_\star+10\log A$, 
%(note that $||n(t)||_{L_\bx^2}$ is continuous by Theorem~\ref{thm:local_Hs_0}), 
completing the proof of  \eqref{C1}.
%  \footnote{\myb{Here we need that $||n(t)||_2$ is continuous in time. I will need to devote more time on this. I am  considering using mild formulation since we have explicit kernel for the evolution and the kernel we have are very similar to the heat kernel for short time. }} 
 
First note that \eqref{H2}, \eqref{Bootstrap_hypothesis_4B},  and Theorem \ref{thm:Local_Hs} imply
\begin{align}
||n||_{L_t^\infty([0,T_\star];L_\bx^\infty)} + |||\bx|\sqrt n||_{L_t^\infty([0,T_\star];L_\bx^2)} +  ||\na n||_{L_t^\infty([0,T_\star];L_\bx^2)}&<\infty.\label{Bootstrap_hypothesis_ML}
%||n||_{L_t^\infty([0,T_\star];L_\bx^\infty)}&<\infty.\label{Bootstrap_hypothesis}
\end{align} 
%The two hypotheses imply that that $L^2$-norm of the solution is bounded by $4B$ on the time interval $[0,T_\star]$. The propagation of $L^2$-norm guarantees propagation of $L^\infty$ norm and $L^2$-based moments of the solution. This is the content of  Theorem \ref{thm:Local_Hs}. 
These qualitative bounds 
%\eqref{Bootstrap_hypothesis_Moment},  \eqref{Bootstrap_hypothesis_Linfty} 
will  justify integrations by parts below.

With these preparations in hand, we are ready to prove \eqref{Bootstrap_conclusion_2B}.  Integration by parts yields 
\begin{align}
\frac{d}{dt}||n||_2^2=&-\frac{2}{A}\int|\na n|^2d\bx +\frac{2}{A}\int \na n\cdot ( n\na (-\de)^{-1} n)d\bx +\int \bu\cdot \na (n^2) d\bx\\
=&-\frac{2}{A}\int|\na n|^2d\bx+\frac{1}{A}\int n^3 d\bx +\int \bu\cdot \na (n^2) d\bx\\
=&-\frac{2}{A}\int|\na n|^2d\bx+T_1+T_2,\label{T1T2}
\end{align}
with $\mathbf{u}=(-x_1,x_2)$.  To estimate $T_1$, we use the Gagliardo-Nirenberg interpolation inequality
%, H\"older inequality and Young inequality 
to see that there is $C_{GN}$ such that
 \begin{align}
T_1
%\leq \frac{1}{A}\int n^3 d\bx 
\leq&\frac{1}{A} \sqrt{C_{GN}} ||\na n||_2||n||_2^{2}\leq\frac{1}{A}||\na n||_2^2+\frac{C_{GN}}{A}  ||n||_2^4.\label{T1}
\end{align}
Next we estimate $T_2$ in \eqref{T1T2}. Since $\nabla\cdot \mathbf{u}=0$, we expect this term to be zero, via integration by parts. However, the vector field $\mathbf{u}$ is growing linearly at infinity, so the justification of this is nontrivial. 
From  \eqref{Bootstrap_hypothesis_ML} we have that $n\mathbf{u}\cdot\na n\in L^\infty_t([0,T_\star];L_\bx^1)$. The Lebesgue Dominated Convergence theorem and $\grad\cdot \mathbf{u}=0$ then show that 
%with any  $R_i\to \infty$ we have
\begin{align*}
T_2& 
%=\int_{\rr^2}\mathbf{u}\cdot \na (n^2) d\bx 
=\lim_{R\rightarrow \infty} \int_{B(0,R)}\mathbf{u}\cdot \na (n^2)d\bx
 & =\lim_{R\rightarrow \infty} \int_{\partial B(0,R)}\mathbf{u}\cdot \frac{\bx}{|\bx|}n^2dS\leq \liminf_{R\rightarrow \infty} \int_{\partial B(0,R)} R n^2 dS,
\end{align*}
with $dS$  the arc-length element of the circle $\partial B(0,R)$. But since \eqref{Bootstrap_hypothesis_ML} and \eqref{Mass} show that
%Combining this with \eqref{Moment_n}, which shows
\begin{align}
\int_{0}^\infty \left(\int_{\pa B(0,R)} R n^2 dS \right) dR\leq  ||n||_\infty ||(1+|\bx|^2) n||_1<\infty,
\end{align} 
we see that indeed $T_2=0$.
%and with \eqref{Bootstrap_hypothesis_Linfty}, we obtain
%\begin{align}
%T_2\leq 
%%A\lim_{R\rightarrow \infty}\int_{\pa B(0,R_i)}n^2R_idS_{R_i}\leq 
%||n||_\infty\liminf_{R\rightarrow \infty}\int_{\pa B(0,R)} Rn  dS =0.\label{T_2}
%\end{align}
%%provided $R_i$ is chosen appropriately.

Using the above estimates, from \eqref{T1T2} we obtain that
\begin{align}
\frac{d}{dt}||n(t)||_2^2\leq& 
-\frac{1}{A}||\na n(t) ||_2^2+ \frac{C_{GN}}{A}||n(t)||_2^4 \le
\frac{C_{GN}}{A}||n(t)||_2^4\qquad \forall t\in[t_\star,T_\star].\label{time_integral_L2} 
\end{align}
If  $A$ is chosen large enough so that $C_{GN}B^2                                                                                                                                                                                                                                                                                              \frac{10\log A}{A} \leq \frac 34$, this and \eqref{H1} yield
\begin{align}
||n(t)||_2\leq 2B\qquad \forall t\in[t_\star,T_\star].
\end{align}
This concludes the proof of  \eqref{Bootstrap_conclusion_2B} and hence of \eqref{C1}. 

%We  note that \eqref{C1} and \eqref{H2} yield
%\begin{align*}
%||n(t)||_{2}\leq 2B=2||n_0||_2 \qquad \forall t\in[0,t_\star+10\log A],
%\end{align*}
%and by applying the Moser-Alikakos iteration (see, e.g., Lemma 3.2 in \cite{CalvezCarrillo06}, Appendix of \cite{KiselevXu15}, or \cite{Kowalczyk05}) on the same time interval we obtain
%\begin{align}\label{Linfty}
%||n(t)||_\infty\leq C_\infty(||n_0||_{L^1\cap L^\infty})\qquad\forall t\in[0,t_\star+10\log A].
%\end{align} 
%We also note here that the transport terms appearing during the $L^p$-estimates can be treated as in the estimate on $T_2$ above.

\smallskip
%\noindent
\textbf{Step 2: Proof of \eqref{C2}.}
% Next we proceed to prove conclusion \eqref{C2}. To this end, we 
Let us first estimate the difference $n-\rho$, where $\rho$ is the  solution to \eqref{EQ:PS} on  $[t_\star,t_\star+ 10\log A]$ with initial data $\rho(t_\star)=n(t_\star)$. Similarly to \eqref{T1T2} and the following estimate on $T_2$, together with 
\[
2\int \na( n-\rho) \cdot ( n\na c)d\bx \leq 2||\na (n-\rho)||_2 ||n||_{4} ||\na c||_{4} \leq ||\na (n-\rho)||_2^2 +  ||n||_{4}^2 ||\na c||_{4}^2
\]  
in place of the estimate on $T_1$,
we obtain
%From $\grad\cdot\bu=0$,  Young inequality, and H\"older inequality we obtain
\begin{align*}
\frac{d}{dt} ||n-\rho||_2^2\leq&
-\frac{1}{A}||\na (n-\rho)||_2^2+ \frac{1}{A}||n||_{4}^2 ||\na c||_{4}^2 \leq
\frac{1}{A}||n||_{4}^2 ||\na c||_{4}^2 .
\end{align*}
%Here to rigorously justify the integration by parts in the transport term, we apply Theorem \ref{thm:Local_Hs}, the Lebesgue dominated convergence theorem and the second moment bound \eqref{second_moment_bound}.  
%To estimate the chemical gradient $\na c=\na(-\de)^{-1}n$, we apply 
The Hardy-Littlewood-Sobolev inequality  yields $||\na c||_{4}\leq C_{HLS}||n||_{{4/3}}$.  Then by applying H\"older inequality and using Theorem \ref{thm:Local_Hs}, and then using \eqref{C1} and \eqref{Mass}, we obtain
\begin{align} \label{3.111}
\frac{d}{dt}||n-\rho||_2^2\leq &\frac{C_{HLS}}{A}||n||_{2}^2 ||n||_\infty ||n||_1 \leq \frac{4C_{HLS}}{A} B^2C_\infty M,
\end{align}
where $C_\infty=C_\infty(2B,\|n_0\|_{L^1\cap H^2})$ is from Theorem \ref{thm:Local_Hs} (and, crucially,  it does not depend on $A$).
Integrating this inequality on the time interval $[t_\star,t_\star+10\log A]$ and using $\rho(t_\star)=n(t_\star)$ shows that if $A$ is chosen large enough (depending on $||n_0||_{L^1\cap L^\infty}$), we have that
\begin{align}\label{L2deviation}
||n(t)-\rho(t)||^2_2\leq& \frac{40\log A}{A} C_{HLS} B^2C_\infty M\leq \frac{B^2}{4}\qquad\forall t\in[t_\star,t_\star+ 10\log A].
\end{align}
%Here $M:=||n_0||_1$ is the mass \eqref{Mass}.  
So $n$ stays close to $\rho$ on this time interval, and we now need to estimate the latter. 

Since \eqref{EQ:PS} preserves total mass of solutions, we have
% H\"older inequality, the $L^2$ norm of the passive scalar equation is bounded  
\begin{align*} 
||\rho(t_\star+ 10\log A)||_2\leq||\rho(t_\star)||_1^{1/2}||\rho(t_\star+ 10\log A)||_\infty^{1/2} \leq M^{1/2}||\rho(t_\star+ 10\log A)||_\infty^{1/2}.
\end{align*}
%Since the initial data of $\rho$ and $n$ are identical at time $t_\star$, we have 
%\begin{align}||\rho(t_\star+{100}\log A)||_2\leq ||n(t_\star)||_1^{1/2}||\rho(t_\star+{100}\log A)||_\infty^{1/2}= M^{1/2}||\rho(t_\star+{100}\log A)||_\infty^{1/2}.
%\end{align} 
Then \eqref{fast-decay} shows that for all large enough   $A$ we have
\begin{align}\label{2.222}
||\rho(t_\star+ 10\log A)||_2 &\leq M^{1/2}{A^{-4}}\leq \frac{B}{2}.
\end{align}
This and  \eqref{L2deviation} now yield \eqref{C2}, finishing the proof of the 2D case.
%\[||n(t_\star+{100}\log A)||_2\leq||\rho(t_\star+100\log A)||_2+||n(t_\star+100\log A )-\rho(t_\star+100\log A)||_2\leq \frac{1}{2}B.
%\]
%This completes the proof of conclusion \eqref{C2}.

\vspace{5 mm}
\noindent
\textbf{The 3D case}.
This case is analogous to the 2D case, with a few minor adjustments.  In Step 1, the estimate on $T_1$ will now use both the Gagliardo-Nirenberg interpolation inequality and Young inequality to get
\[
\int n^3 d\bx \leq C_{GN}^{1/4} \|\nabla n\|_2^{3/2}\|n\|_2^{3/2} \leq \|\nabla n\|_2^2 + C_{GN}\|n\|_2^6.
\]
This then yields
\[ 
\frac{d}{dt}||n(t)||_2^2\leq  
-\frac{1}{A}||\na n(t) ||_2^2+ \frac{C_{GN}}{A}||n(t)||_2^6 \leq
\frac{C_{GN}}{A}||n(t)||_2^6\qquad \forall t\in[t_\star,T_\star]
\]
as before, so \eqref{C1} again follows for large enough $A$. 
In Step 2, the Hardy-Littlewood-Sobolev inequality yields $||\na c||_{4}\leq C_{HLS}||n||_{{12/7}}$ and we obtain
\begin{align*}
\frac{d}{dt}||n-\rho||_2^2\leq \frac{1}{A}||n||_{4}^2 ||\na c||_{4}^2 \leq &\frac{C_{HLS}}{A}||n||_{2}^{8/3} ||n||_\infty ||n||_1^{1/3} \leq \frac{8C_{HLS}}{A} B^{8/3} C_\infty M^{1/3}.
\end{align*}
So we again obtain the estimate \eqref{L2deviation} when $A$ is large enough.  The proof now concludes similarly to the 2D case, using \eqref{fast-decay-3D} instead of \eqref{fast-decay}.
\end{proof}

We conclude this subsection with the proof of Theorem \ref{Thm:PKS_long_time_behavior}.

\begin{proof}[Proof of Theorem \ref{Thm:PKS_long_time_behavior}]
 Let us fix an arbitrary small $\ep>0$.  Estimates \eqref{L2deviation} and \eqref{2.222} in the proof of Theorem \ref{Thm:PKS_suppression_of_blow_up}, together with \eqref{goal1}, guarantee that 
 %if  $A$ is large enough, there is $T_\star>0$ such that then 
 $\sup_{t\geq 10\log A} ||n(t)||_{2}\leq \ep$ for all large enough $A$ (depending only on  $||n_0||_{L^1\cap H^2}$ and $\ep$).  Then, as in that proof, we obtain 
\begin{align*}
\frac{d}{dt}||n||_2^2\leq& -\frac{1}{A}||\na n||_2^2+\frac{C_{GN}}{A}||n||_2^6
\leq -\frac 1A \frac{||n||_{2}^6}{C_{GN}' ||n||_{3/2}^4}+\frac{C_{GN}}{A}||n||_2^6
\leq -\frac{||n||_2^6}{A}\left(\frac{1}{C_{GN}' ||n||_{3/2}^4}-C_{GN}\right),&&
\end{align*}
where we once more  used the Gagliardo-Nirenberg interpolation inequality.
Since we also have  $||n||_{3/2}\leq ||n||_1^{1/3}||n||_2^{2/3}$,
%\leq M^{1/3}\ep^{2/3}$ 
for all large $A$ and all $t\geq 10 \log A$ we have
 \begin{align}
\frac{d}{dt}||n(t)||_2^2\leq-\frac{||n(t)||_2^6}{A}\left(\frac{1}{C_{GN}'M^{4/3}||n(t)||_2^{8/3}}-C_{GN}\right)\leq -\frac{||n(t)||_2^4}{\ep^{1/3}A},
\end{align}
provided  we fix a small enough $\ep=\ep(M)\in(0,\frac 1{1000})$ (and then the lower bound on $A$ only depends on  $||n_0||_{L^1\cap H^2}$).
This and $10 \frac{\log A}A \leq \frac {10}e \leq (\ep^{-2}-1)\ep^{1/3}$ now imply  the bound
\[
\|n(t)\|_2 \leq \left(\|n(10\log A)\|^{-2}_2+ (\ep^{1/3}A)^{-1}(t-10\log A)\right)^{-1/2} \leq \left (1+(\ep^{1/3}A)^{-1}t \right)^{-1/2} \qquad \forall t \geq 10\log A
\]
for solutions to \eqref{PKS_rescaled}, which after rescaling in time becomes $\|n(t)\|_2 \leq (1+\ep^{-1/3}t)^{-1/2}$ for all $t \geq 10 \frac{\log A}A$ for solutions to \eqref{PKS}.  
%If we also pick $\ep\leq 6^{-6}M^6$, then this 
This also yields
\[
\int_{B(0,t^{1/4})} n(t,\bx)d\bx \leq \frac 2{\sqrt 3} \sqrt\pi \|n(t)\|_2 t^{3/8}
\leq 3\ep^{1/6} t^{-1/8} \leq t^{-1/8}
\]
for these $t$, completing the proof.
%\footnote{\myb{Note that we do not obtain enhanced dissipation in this argument. It is an interesting question whether solutions decay to zero with a faster rate. 
%%If $\ep\leq A^{-\gamma}$, then we have that the decay rate is better than $1/A$.
%}}
\end{proof}

\subsection{Quenching in Advection-Reaction-Diffusion Equations}\label{Sec:ARD}
We now prove Theorem \ref{Thm:Quenching_in_Reaction_Diffusion}. 
%Recall that $f$ is defined in \eqref{reaction_f} and $\bxm=  \frac{1}{d-1}(x_1,\ldots, x_{d-1})$. 
We rescale \eqref{Reaction-Diffusion} to 
\begin{align}\label{ADR-rescale}
\pa_t n+(-\bxm,x_d)\cdot \na n=A^{-1}\de n+ A^{-1}f(n),\qquad \bx=(\bx_-,x_d)\in\rr^d,
%,\qquad\bx\in\rr^d.
\end{align}
with $n(0,\bx)=n_0(\bx)\geq 0$ and $d\in\{2,3\}$.

Since $f$ is Lipschitz, we have $f(n)\leq \beta n$ on $[0,1]$, for some $\beta\geq 0$.  The comparison principle then ensures that the solution to  \eqref{ADR-rescale} is bounded above by the solution to 
\begin{align}
\pa_t \wh n+(-\bxm,x_d)\cdot \na \wh n=A^{-1}\de \wh n+{\beta}A^{-1} \wh n,\qquad \wh n(0,\bx)=n_0(\bx).
\end{align}
But then $\wh n(t)=e^{\beta t/A}\rho(t)$, where $\rho$ solves \eqref{EQ:PS} with $\rho(0,\cdot)=n_0$.  From
\eqref{2.222} we see that for all large enough $A$ we have $\|\wh n(10\log A,\cdot)\|_\infty\leq \alpha$ (with $\alpha$ from $\mathbf{(N2)}$).  So after time $10\log A$, the solution $n$ to \eqref{ADR-rescale} solves \eqref{EQ:PS}, and the result now follows from $\|n(10\log A,\cdot )\|_{1}\leq e^{10\beta \log A/A}\|n_0\|_1<\infty$.

%The above equation can be transformed into the passive scalar equation as follows
%\begin{align}
%\pa_t (\wh n e^{-t/A})+(-\bxm,x_d)\cdot \na (\wh n e^{-t/A})=\frac{1}{A}\de( \wh n e^{-t/A}),\quad \wh n(t=0)=n_0.
%\end{align} 
%By the fast-decay property of the passive scalar \eqref{fast-decay}, \eqref{fast-decay-3D}, we have that 
%\begin{align}
%||n(t)||_{L_\bx^\infty}\leq e^{t/A}||\wh n e^{-t/A}||_{L_\bx^\infty}\leq e^{t/A} \frac{1}{\frac{1}{A}\sinh(t)}||n_0||_{L_\bx^1},\quad d=2;\\
%||n(t)||_{L_\bx^\infty}\leq e^{t/A}||\wh n e^{-t/A}||_{L_\bx^\infty}\leq e^{t/A} \frac{1}{ \sqrt{\frac{e^{2t}-1}{2A}}\frac{1-e^{-t}}{A}}||n_0||_{L_\bx^1},\quad d=3.
%\end{align} By plugging in $t=100\log A$, and choosing $A$ large depending on  $\al_0$ and $||n_0||_1$, we have
%\begin{align}
%||n(100\log A)||_{L_\bx^\infty}\leq\frac{4e^{\frac{100\log A}{A}}}{A^{50}}||n_0||_{L_\bx^1}\leq\frac{1}{2}\al_0.
%\end{align} This concludes the proof of Theorem \ref{Thm:Quenching_in_Reaction_Diffusion} in dimension two and three. 
%
%
%We show that at time $ T_0=100\log A$, the following estimate holds if $A=A(||n_0||_1,\al_0)$ is chosen large enough 
%\begin{align}\label{Goal3}
%||n(T_0)||_\infty\leq \frac{1}{2}\al_0.
%\end{align}
%Now for time $t\geq T_0$, the nonlinearity is vacuous, so the solution decays to zero in $L^\infty$ space. 

\section{Shear Flow Regime}\label{Sec:Shear}
%\subsection{Suppression of Blow-up in the Advective PKS Model}\label{Sec:PKS_Shear}
Let us now consider the shear flow Advective PKS model  

\begin{align}\label{EQ:PKS_shear} 
\pa_t n+A u(\bx_-) \pa_{x_1} n+\na\cdot(\na c n)=\de n, \qquad \bx=(x_1,\bx_-)\in \mathbb{R}\times \mathbb{T}^{d-1},
\end{align}
%and let $S_{t_0,t_0+\tau}$ be the solution operator  for 
with $n(0,\bx)=n_0(\bx)\geq 0$ and $d\in\{2,3\}$, and the corresponding passive scalar equation
\begin{align} \label{EQ:PS_shear} 
\pa_t \rho +Au(\bx_-)\pa_{x_1} \rho =\de \rho, 
%\qquad \bx=(x_1,\bx_-)\in \mathbb{R}\times \mathbb{T}^{d-1},
\end{align}
with $\rho(0,\bx)=\rho_0(\bx)$.   Note that unlike in \eqref{PKS_rescaled}, time is not rescaled here.  Then we have the following replacement for Corollary \ref{Cor:fast-decay}, whose 2D case appears in \cite{CKR00}.
% (see \cite{KiselevZlatos} for the case when $u$ does have plateaus and is only $C^1$).

\begin{lem} \label{L.3.1x}
%Consider the semigroup $S_{\cdot, \cdot+t}$ associated to the passive scalar equation \eqref{EQ:PS_shear}. Assume that the 
If $u$ has no plateaus (i.e., $P_{u}=\emptyset$), then for each $t>0$ there is $C_t<\infty$ (depending also on $u$) such that solutions to \eqref{EQ:PS_shear}  satisfy
%for each $\tau_\star>0$ there is $C_\star=C_\star(u,\tau_\star)$ such that
\begin{align}\label{fast-decay_shear} 
%||S_{t_0,t_0+\tau_\star}||_{L^1\rightarrow L^\infty}
%\leq \frac{C_\star}{A}.
||\rho(t,\cdot)||_{L^\infty(\rr\times\mathbb T^{d-1})}&\leq \frac{C_t}{A}||\rho_0||_{L^1(\rr\times\mathbb T^{d-1})}
\end{align}
\end{lem}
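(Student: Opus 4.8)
The plan is to Fourier transform \eqref{EQ:PS_shear} in the unbounded variable $x_1$, which decouples the evolution into a family of transverse semigroups on $\mathbb{T}^{d-1}$ indexed by the frequency $k$, and to extract the factor $A^{-1}$ from the enhanced dissipation of these transverse flows together with the Jacobian of the substitution $\lambda=Ak$. Writing $\widehat\rho_k(t,\bx_-)=\int_\rr e^{-ikx_1}\rho(t,x_1,\bx_-)\,dx_1$, equation \eqref{EQ:PS_shear} becomes $\pa_t\widehat\rho_k=(\de_{\bx_-}-k^2-iAku(\bx_-))\widehat\rho_k$, so $\widehat\rho_k(t)=T_k(t)\widehat\rho_k(0)$ with $T_k(t):=e^{t(\de_{\bx_-}-k^2-iAku)}=e^{-tk^2}G_{Ak}(t)$, where $G_\lambda(t):=e^{t(\de_{\bx_-}-i\lambda u)}$ acts on $L^2(\mathbb{T}^{d-1})$. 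Denoting by $S_t$ the solution operator of \eqref{EQ:PS_shear}, I would first prove the $L^2$-to-$L^\infty$ bound $\|S_t\|_{L^2\to L^\infty}\leq C'_t A^{-1/2}$ and then upgrade it to \eqref{fast-decay_shear} by duality.

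Two ingredients feed into the $L^2$-to-$L^\infty$ bound. The first is an $A$-independent transverse smoothing estimate $\|T_k(t/2)\|_{L^2(\mathbb{T}^{d-1})\to L^\infty(\mathbb{T}^{d-1})}\leq C_t$: because the potential $iAku$ is purely imaginary and $-k^2\leq0$, Kato's inequality shows that $|T_k(s)g|$ is a subsolution of the heat equation on $\mathbb{T}^{d-1}$, hence $|T_k(s)g|\leq e^{s\de_{\bx_-}}|g|$ pointwise and the smoothing norm is dominated, uniformly in $A$ and $k$, by that of the torus heat semigroup. The second, and genuinely decisive, ingredient is the transverse enhanced dissipation: the no-plateau hypothesis should give $\|G_\lambda(t)\|_{L^2\to L^2}\leq Ce^{-ct|\lambda|^\alpha}$ for some $\alpha>0$, and in particular
\[
c_2(t):=\int_\rr\|G_\lambda(t)\|_{L^2\to L^2}^2\,d\lambda<\infty\qquad(t>0).
\]

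Granting these, I split $T_k(t)=T_k(t/2)T_k(t/2)$, use the reconstruction bound $\|S_t\rho_0\|_{L^\infty}\leq\frac{1}{2\pi}\int_\rr\|T_k(t)\widehat\rho_k(0)\|_{L^\infty(\mathbb{T}^{d-1})}\,dk$, and estimate each integrand by $\|T_k(t/2)\|_{L^2\to L^\infty}\|T_k(t/2)\|_{L^2\to L^2}\|\widehat\rho_k(0)\|_{L^2(\mathbb{T}^{d-1})}\leq C_t\|G_{Ak}(t/2)\|_{L^2\to L^2}\|\widehat\rho_k(0)\|_{L^2(\mathbb{T}^{d-1})}$. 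Cauchy--Schwarz in $k$, the substitution $\lambda=Ak$, and Plancherel in $x_1$ then yield
\[
\|S_t\rho_0\|_{L^\infty}\leq\frac{C_t}{2\pi}\Big(\int_\rr\|G_{Ak}(t/2)\|_{L^2\to L^2}^2\,dk\Big)^{1/2}\Big(\int_\rr\|\widehat\rho_k(0)\|_{L^2(\mathbb{T}^{d-1})}^2\,dk\Big)^{1/2}=\frac{C_t\,c_2(t/2)^{1/2}}{(2\pi)^{1/2}A^{1/2}}\|\rho_0\|_{L^2},
\]
which is the claimed $\|S_t\|_{L^2\to L^\infty}\leq C_t'A^{-1/2}$; here the power $A^{-1/2}$ is exactly the Jacobian $dk=A^{-1}d\lambda$ of the frequency rescaling. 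Finally I would write $S_t=S_{t/2}S_{t/2}$ and note that the adjoint $S_{t/2}^*$ solves the same shear equation with $u$ replaced by $-u$ (equivalently $A$ by $-A$), so it obeys the identical bound; hence $\|S_t\|_{L^1\to L^\infty}\leq\|S_{t/2}\|_{L^2\to L^\infty}\|S_{t/2}\|_{L^1\to L^2}=\|S_{t/2}\|_{L^2\to L^\infty}\|S_{t/2}^*\|_{L^2\to L^\infty}\leq (C_{t/2}')^2A^{-1}$, which is \eqref{fast-decay_shear}.

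The one hard step is the transverse enhanced dissipation bound, i.e.\ the finiteness of $c_2(t)$; everything else is routine Fourier analysis, heat-kernel smoothing, and duality. Its 2D form is exactly the decay proved in \cite{CKR00}. The mechanism is that $P_u=\emptyset$, together with compactness and smoothness of $u$, upgrades to the quantitative non-degeneracy $\inf_{\bx_-}\max_{1\leq|\beta|\leq N}|\pa^\beta u(\bx_-)|\geq c_0>0$ for some finite $N$; this produces a resolvent estimate for $-\de_{\bx_-}+i\lambda u$ that is uniform in the spectral parameter and improves as $|\lambda|\to\infty$, which can be converted into the exponential decay of $G_\lambda(t)$ above via a Gearhart--Pr\"uss-type argument. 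In 3D the identical scheme applies once the planar no-plateau condition \eqref{Plateau-3D} is invoked to furnish the same quantitative lower bound on $\mathbb{T}^2$; this is the only place where the higher-dimensional hypothesis enters, and I expect it to be the most delicate point.
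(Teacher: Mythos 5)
Your proposal is correct in outline but takes a genuinely different route from the paper. The paper's proof (following \cite{CKR00}) factors the solution as a one-dimensional heat convolution in $x_1$ applied to the solution $\psi$ of the transverse-diffusion-only problem $\pa_t\psi+Au(\bxm)\pa_{x_1}\psi=\de_{\bxm}\psi$, invokes H\"ormander's hypoellipticity theorem \cite{Hormander67} (the no-plateau condition is precisely the bracket-spanning condition for the vector fields $\pa_{\bxm}$ and $\pa_t+u\pa_{x_1}$) to get a smooth transition density $p_A$, and then obtains the factor $A^{-1}$ for free from the exact scaling identity $p_A(t,x_1,\cdot,\cdot)=A^{-1}p_1(t,A^{-1}x_1,\cdot,\cdot)$; this treats $d=2$ and $d=3$ with identical effort. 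You instead Fourier transform in $x_1$ and extract $A^{-1}$ as two Jacobian factors $A^{-1/2}$, one directly and one through the adjoint step, which you set up correctly ($S_t^*$ is the solution operator for the profile $-u$, and $P_{-u}=P_u=\emptyset$). Your smoothing ingredient (the domination $|T_k(s)g|\leq e^{s\de_{\bxm}}|g|$ via Trotter/Kato, uniform in $A$ and $k$), the Cauchy--Schwarz/Plancherel bookkeeping, and the splitting $S_t=S_{t/2}S_{t/2}$ are all sound.

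The caveat is that your ``one hard step'' -- the bound $\|G_\lambda(t)\|_{L^2\to L^2}\leq Ce^{-ct|\lambda|^\alpha}$, or even just finiteness of $c_2(t)$ -- carries essentially all the content of the lemma, and it is not off-the-shelf in the form you need. For $d=2$ it can indeed be assembled from a uniform resolvent estimate for $-\pa_{x_2x_2}+i\lambda(u-\mu)$ plus a quantitative Gearhart--Pr\"uss theorem (cf.\ \cite{Wei18}), using the sublevel-set bound that follows from compactness and $P_u=\emptyset$. For $d=3$, however, the potential depends on both transverse variables, and the standard one-dimensional resolvent argument leans on the interpolation $\|g\|_\infty^2\lesssim\|g\|_{2}\|g\|_{H^1}$, which fails on $\T^2$; the scheme survives (replace $L^\infty$ by $L^p$ on the small sublevel set, at the cost of a worse $\alpha$), but calling it ``identical'' understates the work, whereas the paper's hypoellipticity argument is dimension-blind. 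What your approach would buy, once completed, is more quantitative output -- an explicit dependence of $C_t$ on $t$, $L^2\to L^2$ decay rates, and no need for $C^\infty$ regularity of $u$ (finite-order non-degeneracy suffices) -- at the price of proving the deferred resolvent estimates, which the paper avoids entirely by outsourcing the regularity mechanism to H\"ormander's theorem and getting the $A$-dependence from pure scaling.
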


\begin{proof} 
%The proof from \cite{CKR00} works for both $d=2,3$. 
%If $\rho$ solves \eqref{EQ:PS_shear} and $\psi$ solves
%\begin{align}
%%\pa_t\Phi= &\de \Phi-Au(x_2)\pa_{x_1} \Phi,\quad \Phi(t_0)=\Phi_0,\label{3.2_shear}\\
%\pa_t \psi +Au(x_2)\pa_{x_1} \psi =& \pa_{x_2x_2} \psi ,\qquad \psi(0,\cdot)=\rho_0,\label{3.3_shear}
%\end{align}
%then
%\begin{align}\label{PhiPsi_relation}
%\rho(t,\bx)=\int_{-\infty}^\infty  \frac{1}{\sqrt{4\pi t}}\exp\left[ -\frac{(x_1-y)^2}{4t}\right] \psi(t,y,x_2)\,dy
%\end{align}
%for all $t>0$. From H\"ormander's hypoellipticity theory \cite{Hormander67} we have that
%\begin{align}%\label{Phi0Psi_relation}
%\psi(t,\bx)=\int_{\rr\times\mathbb T} p_A(t,x_1-x_1',x_2,x_2')\rho_0(\bx') \, d\bx',
%\end{align}
%with $\bx'=(x_1',x_2')$ and some smooth transition probability density $p_A$.
%In 2D this is because the Lie algebra generated by operators  $\partial_{x_2}$ and $\partial_t+u(x_2)\partial_{x_1}$ consists of  the vector fields 
%\[
%\partial_{x_2},\, \partial_t+u(x_2)\partial_{x_1}, \,u'(x_2)\partial_{x_1}, \, u''(x_2)\partial_{x_1},\dots,
%\]
% and hence spans the full tangent space $TM\cong \rr^3$ at each $(t,x_1,x_2)$ due to $P_{u}=\emptyset$.  Similarly, in 3D  the operators $\partial_{x_2}$, $\partial_{x_3}$, and $\partial_t+u(x_2,x_3)\partial_{x_1}$ generate the full tangent space $TM\cong \rr^4$ at each $(t,x_1,x_2,x_3)$.
% 
Let us sketch here proof from \cite{CKR00} for $d=2$. 
If $\rho$ solves \eqref{EQ:PS_shear} and $\psi$ solves
\begin{align}
%\pa_t\Phi= &\de \Phi-Au(x_2)\pa_{x_1} \Phi,\quad \Phi(t_0)=\Phi_0,\label{3.2_shear}\\
\pa_t \psi +Au(x_2)\pa_{x_1} \psi =& \pa_{x_2x_2} \psi ,\qquad \psi(0,\cdot)=\rho_0,\label{3.3_shear}
\end{align}
then
\begin{align}\label{PhiPsi_relation}
\rho(t,\bx)=\int_{-\infty}^\infty  \frac{1}{\sqrt{4\pi t}}\exp\left( -\frac{(x_1-y)^2}{4t}\right) \psi(t,y,x_2)\,dy
\end{align}
for all $t>0$. From H\"ormander's hypoellipticity theory \cite{Hormander67} (note that  the Lie algebra generated by operators  $\partial_{x_2}$ and $\partial_t+u(x_2)\partial_{x_1}$ consists of  vector fields $\partial_{x_2}, \partial_t+u(x_2)\partial_{x_1}, u'(x_2)\partial_{x_1}, u''(x_2)\partial_{x_1},\dots$, and hence spans the full tangent space $TM\cong \rr^3$ at each $(t,x_1,x_2)$ due to $P_{2D}=\emptyset$) we have that
for each $A\neq 0$, there is a smooth transition probability density $p_A$ such that
\[
\psi(t,x_1,x_2)= \int_{\rr\times\mathbb T} p_A(t,x_1-y_1,x_2,y_2) \rho_0(y_1,y_2) \, d(y_1,y_2).
\]
We clearly have
\begin{align}
p_A(t,x_1,x_2,y_2)= A^{-1} p_1\left(t,A^{-1} x_1,x_2,y_2\right),%\label{defn_p_A}
\end{align}
%where $p_0$ is the transition probability, which solves the following equation:
%\begin{align*}
%\pa_t p_0+u(x_2)\pa_{x_1} p_0=&\de_{x_2} p_0,\\
%p_0(0,x_1,x_2,x_2')=&\delta (x_1)\delta(x_2-x_2'),\quad (x_1,x_2)\in \rr\times \mathbb{T}.
%\end{align*}
%Since we assume that the shear flow profile $u$ has no plateau, i.e., $P_{2D}=\emptyset$, then the vector fields $\{\pa_{x_2}, u'(x_2)\pa_{x_1},...\}$, which are generated from the vectors $u(x_2)\pa_{x_1}$ and $\pa_{x_2}$ through Lie brackets, span $\rr^2$ at every point $(x_1,x_2)\in\mathbb{R}\times \mathbb{T}$. By H\"ormander's hypoellipticity criterion \cite{Hormander67}, we have that the transition density $p_0(\tau_\star)$ is bounded for strictly positive $\tau_\star>0$, i.e., 
%\begin{align*}
%C_\star(u,\tau_\star):=||p_0(t_0+\tau_\star)||_{L_{x_1,x_2}^\infty}<\infty.
%\end{align*}
%Combining this estimate, the relations \eqref{PhiPsi_relation}, \eqref{Phi0Psi_relation} and the definition of $p_A$ \eqref{defn_p_A} yields the following estimate:
so
\begin{align}
||\rho(t)||_{\infty}\leq ||\psi(t)||_{\infty}\leq A^{-1}||p_1(t)||_{\infty}||\rho_0||_{1}.
%=\frac{C_\star}{A}||\Phi_0||_{L^1}.
\end{align}
%As a result, we have that
%\begin{align*}||S_{t_0,t_0+\tau_\star}||_{L^1\rightarrow L^\infty}\leq \frac{C_\star}{A}.\end{align*}This concludes the proof.
Therefore $C_t=||p_1(t)||_{\infty}$ works.

The  proof for $d=3$ is identical, with the operators $\partial_{x_2}$, $\partial_{x_3}$, and $\partial_t+u(x_2,x_3)\partial_{x_1}$ now generating the full tangent space $TM\cong \rr^4$ at each $(t,x_1,x_2,x_3)$ due to $P_u=\emptyset$.
\end{proof}

Now we are ready to prove Theorem \ref{Thm:PKS_suppression_of_blow_up_TR} (using Theorem \ref{thm:Local_shear} below in place of Theorem \ref{thm:Local_Hs}).

\begin{proof}[Proof of Theorem \ref{Thm:PKS_suppression_of_blow_up_TR}]
The proof is  similar to the proof of Theorem \ref{Thm:PKS_suppression_of_blow_up}, but without rescaling time and with \eqref{fast-decay_shear} instead of \eqref{fast-decay}.  The nonlinear aggregation effects now take place on a time scale of order $1$, so up to some time $\tau_\star \ll 1$, solutions to \eqref{EQ:PKS_shear} will be well-approximated by those to \eqref{EQ:PS_shear}. On the other hand, the latter solutions decay fast in $L^\infty $ on this shorter time scale when $A$ is large, so there will be no concentration of mass for solutions to \eqref{EQ:PKS_shear} either and singularities cannot form.  
%Throughout the proof, we assume that $n$ solves nonlinear PKS equation \eqref{EQ:PKS_shear} and $\rho$ solves passive scalar equation \eqref{EQ:PS_shear}.

We proceed as in the proof of Theorem \ref{Thm:PKS_suppression_of_blow_up}, with $n$ solving  \eqref{EQ:PKS_shear} and with $10\log A$ replaced by $0<\tau_\star\ll 1$ (to be specified) in Conclusions (a), (b).  In place of \eqref{T1T2} we now have (with $C_{GN}\geq 1$)
\begin{align}
\frac{d}{dt}||n||_2^2=&-2\int |\na n|^2d\bx+\int n^3d\bx 
\leq -||\na n||_2^2+ C_{GN} ||n||_2^4
%+||n||_2^{3}\leq 2C_{GN} (||n||_2^4+1). 
\leq C_{GN} ||n||_2^4
\,\,\, \label{L2estimate-2D}
\end{align}
 in the 2D case.
Note that since the domain is quasi-onedimensional and the flow is bounded on it, it is straightforward to show that the flow term (denoted $T_2$ in \eqref{T1T2}) vanishes.
Also note that the Gagliardo-Nirenberg interpolation inequality used to estimate $T_1$ in \eqref{T1T2} extends to $\mathbb{R}\times \mathbb{T}$. This follows from applying it to $n$ periodically extended to $\mathbb{R}^2$ and multiplied by $\chi_{[-L,L]} (x_2)$, with $\chi_{[-L,L]}$ a smooth approximate characteristic function of $[-L,L]$, and then taking $L\to\infty$.  A similar extension of the relevant 3D Gagliardo-Nirenberg interpolation inequality holds in the 3D case.

So we obtain Conclusion (a) if we pick  $\tau_\star\leq \frac 34 C_{GN}^{-1}B^{-2}$ ($d=2$) or $\tau_\star\leq \frac {15}{32} C_{GN}^{-1}B^{-4}$ ($d=3$).
Next, in place of \eqref{3.111} we obtain (with $\rho$ solving \eqref{EQ:PS_shear}  and $\rho(t_\star)=n(t_\star)$)
\begin{align*}
\frac{d}{dt} ||n-\rho||_2^2\leq& -||\na (n-\rho)||_2^2+ ||n||_{2}^2 ||\na c||_{\infty}^2 
\leq C ||n||_{2}^2(||n||_1^2+||n||_\infty^2)\leq 4 C B^2(M^2+C_\infty^2) 
\end{align*}
for both $d=2,3$, using Lemma \ref{L.B.1x} below to estimate $||\na c||_{\infty}$. We therefore again obtain
\begin{align}\label{L2deviation_shear}
||n(t)-\rho(t)||^2_2\leq  \frac{B^2}{4} \qquad\forall t\in[t_\star,t_\star+\tau_\star],
\end{align}
provided $\tau_\star \leq \frac 1{16} C ^{-1}(M^2+C_\infty^2)^{-2}$ (with $C_\infty$ from Theorem \ref{thm:Local_shear}).
%\begin{align}%\label{tau2}
%\tau_\star C_{HLS}(M^2+C_\infty^2)\leq \frac{1}{4}.
%\end{align}
%Directly integrating on the time interval $t\in[t_\star,t_\star+\tau_\star)$ and using the fact that $\rho(t_\star)=n(t_\star)$ and $\tau_\star$ is chosen small depending on the initial data $||n_0||_{L^1\cap L^\infty}$,
%\begin{align}\label{tau2}\tau_\star C_{HLS}(M^2+C_\infty^2)\leq \frac{1}{100},\end{align}
% we have that
%\begin{align}\label{L2deviation_shear}
%||n(t)-\rho(t)||^2_2\leq \tau_\star C_{HLS}(M^2+C_\infty^2) B^2\leq \frac{1}{100}B^2,\quad\forall t\in[t_\star,t_\star+\tau_\star].
%\end{align}
%As a result, we see that the deviation between the two solutions $n$ and $\rho$ is small. 
%
%Next we consider the passive scalar equation \eqref{EQ:PS_shear} subject to initial data $\rho_{in}=n(t_\star)$. Combining the semigroup estimate \eqref{fast-decay_shear} and the fact that $||n(t_\star)||_1=||\rho(t_\star)||_1=||\rho(t_\star+\tau_\star)||_1$ yields that if the magnitude $A$ is chosen large enough, then the following estimate holds
So if $\tau_\star>0$ is chosen small enough and then $A$ large enough (both depending only on $||n_{0}||_{L^1\cap L^\infty}$ and $u$), Conclusion (b) again follows from \eqref{L2deviation_shear} and
\begin{align*}
||\rho(t_\star+\tau_\star)||_2\leq ||\rho(t_\star+\tau_\star)||_1^{1/2}||\rho(t_\star+\tau_\star)||_\infty^{1/2} \leq 
C_{\tau_\star}^{1/2} A^{-1/2}||\rho(t_\star)||_{L^1}^{1/2}\leq C_{\tau_\star}^{1/2} A^{-1/2} M^{1/2} \leq \frac{B}{2}.
\end{align*} 
%Note that this is the only place where we choose the magnitude $A$. 
%Combining this with the $L^2$ deviation estimate  \eqref{L2deviation_shear}, we obtain that
%\[||n(t_\star+\tau_\star)||_2\leq||\rho(t_\star+\tau_\star)||_2+||n(t_\star+\tau_\star)-\rho(t_\star+\tau_\star)||_2\leq \frac{1}{2}B.
%\]
%This concludes the proof of decay estimate  \eqref{C2_shear}. To conclude, we specify that the time $\tau_\star$ is taken to be smaller than the minimum of \eqref{tau_1} and \eqref{tau2}. The magnitude $A$ is chosen such that $4 M A^{-1/2} C^{1/2}_\star(u,\tau_\star)\leq B=2||n_0||_2$. The remaining argument is similar to the proof of Theorem \ref{Thm:PKS_suppression_of_blow_up} and we omit them for the sake of brevity.
This finishes the proof.
\end{proof}

%Throughout the proof of Theorem \ref{Thm:PKS_suppression_of_blow_up_TR} and \ref{Thm:PKS_suppression_of_blow_up_T2R}, we apply several estimates involving $L^\infty$-norm of the chemical density. We collect these estimates in the following lemma

\begin{lem} \label{L.B.1x}
There is $C_d>0$ such that if $c=(-\de)^{-1} n$ on $\rr\times\mathbb{T}^{d-1}$, then
\begin{align}
%||n||_{L^3}\leq &C||\na n||_{L^2}^{1/3}||n||_{L^2}^{2/3}+||n||_{L^2};\label{Est:n_L3_2D}\\
||\na c||_{\infty}\leq& C_d(||n||_{1} +||n||_{{d+1}}).
%\label{Est:na_c_Linf_2D}
\end{align} 
%
%\noindent  Consider the domain $\rr\times\mathbb{T}^2$. The $L^\infty$-norm of the chemical gradient $\na c$ is bounded as follows:
%\begin{align}
%%||n||_{L^3}\leq &C||\na n||_{L^2}^{1/2}||n||_{L^2}^{1/2}+||n||_{L^2};\label{Est:n_L^3_3D}\\
%||\na c||_{L^\infty}\leq& C(||n||_{L^1}+||n||_{L^4}). \label{Est:na_c_Linf_3D}
%\end{align}
\end{lem}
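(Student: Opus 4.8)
The plan is to represent $\na c$ as a convolution against $\na G$, where $G$ is the Green's function of $-\de$ on $\Omega:=\rr\times\T^{d-1}$, and to use the Fourier decomposition in the compact variable $\bx_-$ to quarantine the non-decaying part of this kernel. Writing $n(x_1,\bx_-)=\sum_{k\in\Integers^{d-1}}n_k(x_1)e^{2\pi ik\cdot\bx_-}$ and correspondingly $c=\sum_k c_k(x_1)e^{2\pi ik\cdot\bx_-}$, each mode solves the ODE $-c_k''+\mu_k c_k=n_k$ on $\rr$ with $\mu_k=4\pi^2|k|^2$. For $k=0$ this is $-c_0''=n_0$, which gives directly $\pa_{x_1}c_0(x_1)=-\tfrac12\int_\rr\mathrm{sgn}(x_1-y_1)\,n_0(y_1)\,dy_1$ (a bounded function for $n_0\in L^1(\rr)$), and $\na_{\bx_-}c_0=0$. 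Since $n_0$ is the torus-average of $n$, Jensen's inequality yields $\|n_0\|_{L^1(\rr)}\le C\|n\|_1$, so this zero mode contributes $\|\na c_0\|_\infty\le\tfrac12\|n_0\|_{L^1(\rr)}\le C\|n\|_1$. This is exactly the part of the kernel that fails to decay as $|x_1|\to\infty$, and it is the source of the $\|n\|_1$ term in the claimed estimate.

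It then remains to bound $\na\tilde c$, where $\tilde c:=c-c_0$ collects the modes $k\neq0$ and solves $-\de\tilde c=\tilde n$ with $\tilde n:=n-n_0$ of zero torus-average on each slice. The corresponding kernel is $\tilde G(\mathbf{z})=\sum_{k\neq0}\tfrac{1}{2\sqrt{\mu_k}}e^{-\sqrt{\mu_k}|z_1|}e^{2\pi ik\cdot\mathbf{z}_-}$. Because of the spectral gap $\mu_k\ge4\pi^2$, its gradient decays exponentially as $|z_1|\to\infty$; near the diagonal $\mathbf{z}=0$ it carries the same Newtonian singularity as the full Green's function (the subtracted zero mode being Lipschitz), so $|\na\tilde G(\mathbf{z})|\lesssim|\mathbf{z}|^{1-d}$ for small $|\mathbf{z}|$. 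I would split $\na\tilde G=K_{\mathrm{loc}}+K_{\mathrm{far}}$ with $K_{\mathrm{loc}}:=\na\tilde G\,\mathbf{1}_{|\mathbf{z}|\le1}$ and $K_{\mathrm{far}}:=\na\tilde G\,\mathbf{1}_{|\mathbf{z}|>1}$, and estimate $\na\tilde c=K_{\mathrm{loc}}*\tilde n+K_{\mathrm{far}}*\tilde n$ by Young's inequality for convolutions on $\Omega$.

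For the local piece, $|K_{\mathrm{loc}}(\mathbf{z})|\lesssim|\mathbf{z}|^{1-d}\mathbf{1}_{|\mathbf{z}|\le1}$ puts $K_{\mathrm{loc}}\in L^{q}(\Omega)$ for every $q<\tfrac{d}{d-1}$; taking $q=\tfrac{d+1}{d}$ (so $(d-1)q=\tfrac{d^2-1}{d}<d$) with conjugate $q'=d+1$, Young's inequality gives $\|K_{\mathrm{loc}}*\tilde n\|_\infty\le\|K_{\mathrm{loc}}\|_{(d+1)/d}\|\tilde n\|_{d+1}\le C\|n\|_{d+1}$. For the far piece, $K_{\mathrm{far}}$ is bounded (no singularity away from the diagonal) and decays exponentially in $|z_1|$, the torus directions being of finite diameter, so $K_{\mathrm{far}}\in L^\infty(\Omega)$ and the pairing $(\infty,1)$ yields $\|K_{\mathrm{far}}*\tilde n\|_\infty\le\|K_{\mathrm{far}}\|_\infty\|\tilde n\|_1\le C\|n\|_1$. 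Since $\|\tilde n\|_p\le\|n\|_p+\|n_0\|_{L^p(\Omega)}\le C\|n\|_p$ for $p\in\{1,d+1\}$ (again by Jensen), combining the three contributions gives $\|\na c\|_\infty\le C_d(\|n\|_1+\|n\|_{d+1})$.

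The main obstacle, and the reason $\|n\|_1$ must appear alongside $\|n\|_{d+1}$, is that the full kernel $\na G$ is not integrable in the unbounded direction: its $\pa_{x_1}$ component tends to the nonzero constant $\mp\tfrac{1}{2|\T^{d-1}|}$ as $|z_1|\to\infty$, so a naive Young estimate against $\|n\|_\infty$ or $\|n\|_1$ fails for the whole kernel. The Fourier decomposition is what cleanly isolates this non-decaying contribution into the explicit zero mode (controlled by $\|n\|_1$), after which the remaining kernel is genuinely integrable at infinity and only mildly singular ($|\mathbf{z}|^{1-d}$) on the diagonal, matching the $L^{d+1}$ endpoint. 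The secondary point to verify carefully is the near-diagonal size estimate $|\na\tilde G|\lesssim|\mathbf{z}|^{1-d}$, i.e. that subtracting the smooth zero mode removes no part of the singularity; this can be checked by comparison with the explicit $\rr^d$ Newtonian kernel or directly from the mode sum.
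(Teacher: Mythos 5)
Your proof is correct, and its first step coincides with the paper's own: the paper also splits off the slice average $\overline c(x_1)=\int c(x_1,\bx_-)\,d\bx_-$ (your zero mode $c_0$) and bounds $\|\overline c'\|_\infty\le\|\overline n\|_1=\|n\|_1$ from the one-dimensional equation $\overline c''=\overline n$; this is the same isolation of the non-decaying part of the kernel that produces the $\|n\|_1$ term. Where you genuinely diverge is the mean-free remainder. The paper never returns to the Green's function: it bounds $\|\na(c-\overline c)\|_\infty$ by Gagliardo--Nirenberg interpolation between $\|\na(c-\overline c)\|_2$ and $\|\na^2(c-\overline c)\|_{d+1}$, controls the first factor by $\|n\|_2$ via the Poincar\'e (spectral-gap) inequality for functions mean-zero in $\bx_-$ and the second by $\|n\|_{d+1}$ via Calder\'on--Zygmund, and then eliminates the $\|n\|_2$ factor with H\"older and Young inequalities (hence the exponents $2/(d^2+d+2)$ and $d(d+1)/(d^2+d+2)$ in its intermediate bound). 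You instead estimate the mean-free kernel $\na\tilde G$ directly --- Newtonian singularity $|\mathbf{z}|^{1-d}$ on the diagonal, exponential decay at infinity from the spectral gap $\mu_k\ge 4\pi^2$ --- and finish with Young's convolution inequality for the exponent pairs $\left(\tfrac{d+1}{d},\,d+1\right)$ and $(\infty,1)$. Both routes are sound; yours avoids Calder\'on--Zygmund and the extension of Gagliardo--Nirenberg to $\rr\times\T^{d-1}$ (which the paper must justify by a separate cutoff/periodization argument), at the price of the two kernel estimates that you only sketch. Those are standard and can be closed exactly as you indicate: near the diagonal, $G-\Phi$ (with $\Phi$ the Newtonian kernel of $\rr^d$) is harmonic, hence smooth, and $\na G_0$ is bounded, so $|\na\tilde G(\mathbf{z})|\lesssim|\mathbf{z}|^{1-d}$; off the diagonal, smoothness on the compact region $\set{|\mathbf{z}|>1,\ |z_1|\le 1}$ plus the absolutely convergent mode sum for $|z_1|\ge 1$ give boundedness and exponential decay of $K_{\mathrm{far}}$.
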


\begin{proof}
%We focus on the proof in the the two-dimensional case. The same idea works in the three-dimensional case. 
%\ifx
%To prove the inequality \eqref{Est:n_L3_2D}, we first decompose the function $n$ into the $x_2$-independent part $\overline{n}(x_1)=\frac{1}{|\mathbb{T}|}\int n(x_1, x_2)dx_2$ and the $x_2$-dependent part $n(x)-\overline{n}(x_1)$. The $x_2$-dependent part now have $x_2$-average zero, so we can apply the Fourier Analysis to prove the Gagliardo-Nirenberg interpolation inequality without lower order terms. Now apply the Gagliardo-Nirenberg interpolation inequality to each component, we have that:
%\begin{align*}
%||n||_3\leq ||n-\overline{n}||_3+||\overline{n}||_3\leq& C_{GN} ||\na (n-\overline{n})||_2^{1/3}||n-\overline{n}||_2^{2/3}+C_{GN}||\na \overline{n}||_2^{1/6}||\overline{n}||_{2}^{5/6}\\
%\leq&C_{GN}(||\na (n-\overline{n})||_2+||\na \overline{n}||_2)^{1/3}(||n-\overline{n}||_2+||\overline{n}||_2)^{2/3}+||\overline{n}||_2\\
%\leq &C_{GN}||\na n||_2^{1/3}||n||_{2}^{2/3}+||n||_2.
%\end{align*}
%Next we prove the $||\na c||_\infty$ estimate \eqref{Est:na_c_Linf_2D}. 
%\fi 
If $\overline{n}(x_1):=\int n(x_1, \bx_-)d\bx_-$ and $\overline{c}(x_1):=\int c(x_1, \bx_-)d\bx_-$, then $\overline{c}''=\overline{n}$ on $\rr$, so 
\begin{align}
||\overline{c}'||_{\infty}\leq ||\overline{n}||_{1}= ||n||_{1}.
\end{align}
%We decompose the solution to $-\de c=n$ into two parts, i.e., $ \overline c=\frac{1}{|\Torus|}\int_{\Torus} c(x_1,x_2)dx_2$ and $c-\overline{c}$. They solve  the following equations
%\begin{align}
%\pa_{x_1x_1} \overline{c}=\overline{n};\quad \de (c-\overline{c})=n-\overline{n}.
%\end{align}
%Since $\overline{c}$ solves the Poisson equation on $\rr$, we have the following estimate 
%\begin{align}
%||\pa_{x_1} \overline{c}||_{L^\infty}\leq C||\overline{n}||_{L^1(\rr)}\leq C||n||_{L^1}.
%\end{align}
%Next we focus on the remainder $c-\overline{c}$. By the 
 Gagliardo-Nirenberg interpolation inequality (extended to $\rr\times\mathbb{T}^{d-1}$ as in the  proof of Theorem \ref{Thm:PKS_suppression_of_blow_up_TR} because $c-\overline{c}$  is mean-zero in $\bx_-$) shows that 
 %with some constant $C_{GN}$ we have
 \begin{align}
||\na (c-\overline{c})||_{\infty}\lesssim ||\na (c-\overline{c})||_{2}^{2/(d^2+d+2)} ||\na^2( c-\overline{c})||_{d+1}^{d(d+1)/(d^2+d+2)}.
%=C_{GN}\norm{\frac{\na}{(-\de)} (n-\overline{n})}_{L^2}^{\frac{1}{4}}\norm{\frac{\na^2}{(-\de)}( n-\overline{n})}_{L^3}^{\frac{3}{4}}.
\end{align}
%\begin{align}
%||\na (c-\overline{c})||_{\infty}\leq C_{GN}||\na (c-\overline{c})||_{2}^{\frac{1}{4}}||\na^2( c-\overline{c})||_{3}^{\frac{3}{4}}=C_{GN}\norm{\frac{\na}{(-\de)} (n-\overline{n})}_{L^2}^{\frac{1}{4}}\norm{\frac{\na^2}{(-\de)}( n-\overline{n})}_{L^3}^{\frac{3}{4}}.
%\end{align}
Since $c-\overline{c}=(-\de)^{-1} (n-\overline{n})$ and is mean-zero in $\bx_-$, Poisson inequality yields
\[
||\na (c-\overline{c})||_{2}\lesssim ||n-\overline{n}||_{2}\leq ||n||_{L^2},
\]
%standard Plancherel equality implies that $||\na(-\de)^{-1}(n-\overline{n})||_{L^2}\leq C||n-\overline{n}||_{L^2}\leq C||n||_{L^2}$. By the Calderon-Zygmund inequality for the Riesz transform,  we have that 
%$\norm{\frac{\na^2}{(-\de)}( n-\overline{n})}_{L^3}\leq C||n-\overline{n}||_{L^3}\leq C||n||_{L^3}$. To conclude, we have that
while the Calder\' on-Zygmund inequality for the Riesz transform yields
\[
||\na^2( c-\overline{c})||_{d+1} \lesssim  ||n-\overline{n}||_{d+1} \leq ||n||_{d+1}.
\]
Hence
%\begin{align}
%||\na c||_{L^\infty}\leq ||\pa_{x_1} \overline{c}||_{L^\infty}+||\na (c-\overline{c})||_{L^\infty}\leq C(||n||_{L^1}+||n||_{L^2}^{1/4}||n||_{L^3}^{3/4}). 
%\end{align}
\begin{align}
||\na c||_{\infty}\lesssim ||n||_{1}+||n||_{2}^{2/(d^2+d+2)} ||n||_{d+1}^{d(d+1)/(d^2+d+2)},
\end{align}
and the result follows from H\"older and Young inequalities.
% yields the result \eqref{Est:na_c_Linf_2D}. 
%In the $\rr\times\Torus^2$ setting, we need to consider the $x_2,x_3$-average $\lan n\ran=\frac{1}{|\Torus|^2}\iint_{\Torus^2}n(x_1,x_2,x_3)dx_2dx_3$ instead. The remaining part of the proof is similar.
\end{proof}

\appendix

\section{Well-posedness and Regularity for the Hyperbolic Advective PKS}

We prove here well-posedness for \eqref{PKS} and \eqref{eq:PKS_Shear}.
% in dimensions $d=2,3$.

%In this section, we prove the  propagation of higher regularity of the equation \eqref{PKS} under the assumption that the $L^2$ norm of the solution is bounded. Since $L^2$ norm is subcritical for PKS equations in both two-dimension and three-dimension, we only focus on the two dimensional case. 

%The the higher Sobolev norms are growing fast due to the strong hyperbolic flow advection. However, we will not track the dependence of the higher Solobev norms on the magnitude $A$ here. In this section, we absorb the magnitude $A$ into the velocity field $\mathbf{u}$, and rewrite the equation \eqref{PKS} as follows:
%\begin{align}
%\pa_t n+\mathbf{u}\cdot\na n=\de n-\na \cdot (\na c n),\quad -\de c= n,\quad \mathbf{u}=A(-x_1,x_2). \label{APKS-appendix}
%\end{align}
%We also consider the 3D-counterpart of equation \eqref{APKS-appendix}
%\begin{align}
%\pa_t n+\mathbf{u}\cdot\na n=\de n-\na \cdot (\na c n),\quad -\de c= n,\quad \mathbf{u}=A\bigg(-\frac{x_1}{2},-\frac{x_2}{2},x_3\bigg). \label{APKS-appendix_3D}
%\end{align}
%%\myb
%We first present the local existence of solutions. 

\begin{thm}[Local well-posedness]\label{thm:local_Hs_0}% (Check!?)
For $d\in\{2,3\}$, $s\geq 1$, and $0\leq n_0\in L^1(\rr^d)\cap H^s(\rr^d)$.
%  with $ |\bx|^2 n_0(\bx)\in {L^1}(\rr^d)$,
there is $t_0=t_0( ||n_0||_{L^2},A)>0$ such that a unique $H^s_\bx$-solution to \eqref{PKS} with $n(0,\bx)=n_0(\bx)$ exists on the time interval $[0,t_0]$.  Moreover, $||n(t,\cdot)||_{L^1}$ is constant, $||n(t,\cdot)||_{H^{s-1}}$  continuous, and $||n(t,\cdot)||_{H^{s}}$ bounded on $[0,t_0]$ (so solutions can blow up in finite time in $H^s_\bx$ only if they do in $L^2_\bx$).
%, and for all $T\leq t_0$ we have $\sup_{t\in[0,T]} ||n(t,\cdot)||_{H^{s}}<\infty$ whenever $\sup_{t\in[0,T]}  ||n(t,\cdot)||_{L^2}<\infty$. 
%
%Consider solutions to the equation \eqref{APKS-appendix} or \eqref{APKS-appendix_3D} subject to $L^1\cap H^s,\,\,  3\leq s\in \mathbb{N}$ initial data with finite moment constraint:
%\begin{align}||n_0||_{H^s_\bx}<\infty,\quad \int n_0|\bx|^6d\bx<\infty.%,\quad \forall \al\text{ such that }|\al|\leq s
%\end{align} The unique $H^s$-solution $n$ exists locally on time interval $[0,t_0]$ for some small time $t_0=t_0(A^{-1},||n_0||_2^{-1})$. Moreover, the $L^1$-norm of the solution is preserved, i.e., $||n(t)||_{L_\bx^1}=||n_0||_{L_\bx^1}=M$ for all $t\in[0,t_0]$. The $H^\ell $-norms ($\ell\leq s,\,\ell \in\mathbb{N}$) are continuous on the same time interval. 
\end{thm}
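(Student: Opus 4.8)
The plan is to establish local well-posedness for \eqref{PKS} via a standard fixed-point argument in the space $C([0,t_0];H^s(\rr^d))$, treating the PDE as a perturbation of the linear evolution. The main technical nuisance, compared to the classical PKS theory on bounded domains, is that the advecting field $\mathbf{u}=(-\bx_-,x_d)$ grows linearly at infinity, so it is not a bounded perturbation and the associated semigroup is not generated on $H^s$ in an entirely off-the-shelf manner. I would therefore first set up the solution map using Duhamel's formula for the full linear operator $\mathcal{L}:=A^{-1}\de - A\mathbf{u}\cdot\na$ (or, equivalently, work in the rescaled variables \eqref{PKS_rescaled} where the linear part is $A^{-1}\de - \mathbf{u}\cdot\na$ and the explicit Green's function from Lemma \ref{lem:Green} is available). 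Because the flow map of $\mathbf{u}$ is the explicit diagonal contraction/expansion $\bx\mapsto(e^t\bx_-, e^{-t}x_d)$ (with the appropriate rescaling in 3D), the passive-scalar semigroup has the kernel written down in Lemma \ref{lem:Green}, and Corollary \ref{Cor:fast-decay} already records its smoothing from $L^1$ to $L^\infty$. One can build from this the needed $H^s$ smoothing estimates of the form $\|\mathcal{S}_t^\mathbf{u} g\|_{H^s}\lesssim t^{-\sigma}\|g\|_{H^{s'}}$ by differentiating the kernel or, more cleanly, by passing through the change of variables that conjugates the advection away and reduces the analysis to the ordinary heat semigroup on $\rr^d$.

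Next I would set up the contraction. Writing $\mathcal{N}(n)=-\na\cdot(n\na(-\de)^{-1}n)$, the key nonlinear estimate is a bound of the form $\|\mathcal{N}(n)-\mathcal{N}(m)\|_{H^{s-1}}\lesssim (\|n\|_{H^s}+\|m\|_{H^s})\|n-m\|_{H^s}$, which follows from the algebra property of $H^s$ for $s>d/2$ (and a suitable product/commutator estimate together with $s\geq 1$ in the borderline case), using that $\na(-\de)^{-1}$ gains one derivative so that $\na c=\na(-\de)^{-1}n$ is controlled in $H^s$ by $n$ in $H^{s-1}$ via Calder\'on--Zygmund and Hardy--Littlewood--Sobolev bounds (the latter also handling the low-frequency/$L^1$ behavior on $\rr^d$). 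Combining this with the linear smoothing and a short time $t_0=t_0(\|n_0\|_{L^2},A)$ gives a self-map of a ball in $C([0,t_0];H^s)$ that is a contraction, yielding existence and uniqueness of the $H^s$-solution on $[0,t_0]$. Preservation of $\|n(t)\|_{L^1}$ follows from integrating the equation against $1$ and checking that both the advective term (divergence form, $\na\cdot\mathbf{u}=0$) and the PKS term (also in divergence form) integrate to zero, with the boundary terms at infinity vanishing; this can be justified exactly as the $T_2=0$ computation in the proof of Theorem \ref{Thm:PKS_suppression_of_blow_up}, using the decay of $n$ provided by the $H^s\cap L^1$ control.

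Continuity of $\|n(t)\|_{H^{s-1}}$ and boundedness of $\|n(t)\|_{H^{s}}$ on $[0,t_0]$ come from the standard energy/parabolic bootstrap: testing the equation in $H^{s-1}$ and $H^s$ and using the nonlinear and commutator estimates above yields a differential inequality for $\|n(t)\|_{H^s}^2$ closing on the short interval, while continuity in the lower norm follows from Duhamel together with the strong continuity of the linear semigroup. The final parenthetical claim --- that blow-up in $H^s$ can only occur simultaneously with blow-up in $L^2$ --- is a continuation criterion: as long as $\sup_{t<T}\|n(t)\|_{L^2}<\infty$, the gain-of-derivative structure of the diffusion plus the subcriticality of the PKS nonlinearity at the $L^2$ level (again via Gagliardo--Nirenberg, as used in \eqref{T1} and its 3D analog) lets one propagate the higher $H^s$ regularity forward, so the local solution extends.

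The step I expect to be the genuine obstacle is the linear semigroup theory for the unbounded, linearly growing drift: one must verify that $\mathcal{S}_t^\mathbf{u}$ is a well-defined strongly continuous family on $H^s(\rr^d)$ with the requisite smoothing and that the relevant integrations by parts (which generate boundary terms at spatial infinity because $\mathbf{u}$ is unbounded) are legitimate. I would handle this by exploiting the explicit conjugation to the heat equation furnished by Lemma \ref{lem:Green}, which reduces all these questions to the standard heat semigroup after an exponentially-in-time change of spatial scale, and by justifying the integrations by parts through the weighted bounds $|\bx|\sqrt{n}\in L^2$ (i.e.\ the finite-second-moment control, which is propagated by the moment estimates alluded to in the introduction) exactly as in the main proof. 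Everything else is routine once this linear framework is in place.
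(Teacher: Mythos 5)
Your core device --- conjugating the hyperbolic drift away so that all analysis reduces to a standard parabolic problem --- is in fact the paper's own key idea, but the two proofs deploy it differently. The paper's proof changes variables at the level of the PDE, setting $N(t,X):=n(t,\bx)$ with $X=(x_1e^{At},x_2e^{-At})$ in 2D (a measure-preserving map, so all $L^p_\bx$ norms are unchanged); in these Lagrangian coordinates the equation becomes a divergence-form parabolic equation with time-dependent anisotropic coefficients $e^{\pm 2At}$ and \emph{no drift term at all}, and the proof then consists of the $L^2$, $H^1$, and recursive $H^r$ energy estimates in the new coordinates, with existence itself deferred to a cited standard approximation procedure. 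You instead propose a Duhamel/contraction argument using the explicit semigroup of Lemma \ref{lem:Green} plus a continuation criterion; that is a legitimate alternative route to existence, and your continuation argument correctly accounts for the stated dependence $t_0=t_0(\|n_0\|_{L^2},A)$.

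The genuine gap is in your justification of the integrations by parts. You propose to control the boundary terms produced by the linearly growing field $\mathbf{u}$ via the weighted bound $|\bx|\sqrt{n}\in L^2_\bx$, ``propagated by the moment estimates.'' But the theorem you are proving assumes only $0\leq n_0\in L^1(\rr^d)\cap H^s(\rr^d)$; the second-moment hypothesis $|\bx|^2 n_0\in L^1$ appears only in Theorem \ref{Thm:PKS_suppression_of_blow_up} and Theorem \ref{thm:Local_Hs}, not here. With no finite initial moment there is nothing to propagate, so your justification of mass conservation and of the energy identities (the analog of the $T_2=0$ step) is unavailable under the stated hypotheses. This is precisely what the conjugated coordinates buy the paper: once the drift is gone, every integration by parts involves only $H^s$ functions and standard density arguments, with no spatial weights whatsoever. (A repair within your framework is possible without moments: test against smooth cutoffs $\varphi_R$ and use that $|\mathbf{u}\cdot\na\varphi_R|\lesssim 1$ uniformly, with support in $\{R\leq|\bx|\leq 2R\}$, so the error terms vanish by dominated convergence for $n\in L^1\cap L^2$; but the moment route as you state it does not work.) A second, more minor, inaccuracy: on $\rr^2$ the field $\na c=\na(-\de)^{-1}n$ is \emph{not} controlled in $H^s$ by $n\in H^{s-1}\cap L^1$, since for $\int n\,d\bx=M>0$ one has $\na c(\bx)\sim \frac{M}{2\pi}\frac{\bx}{|\bx|^2}$ at infinity, which is not square integrable; your contraction estimates must therefore be phrased with the mixed $L^p$ bounds the paper uses, namely $\|\na c\|_{4}\lesssim\|n\|_{4/3}$ via Hardy--Littlewood--Sobolev and $\|\na^2 c\|_{4}\lesssim\|n\|_{4}$ via Calder\'on--Zygmund, rather than with $H^s$ control of $\na c$.
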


\begin{proof}
Let us only consider the 2D case, as the 3D case is similar.  Moreover, we will only derive relevant a-priori estimates on $||n(t,\cdot)||_{H^s}$ here, as local existence then follows via a standard approximation procedure (see, e.g.,  \cite{Biler1998,BlanchetEJDE06,BlanchetCarrilloMasmoudi08,BedrossianMasmoudi14,EganaMischler16}).  
%We focus ourselves  on the $2$-dimensional case. The analysis in the $3$-dimensional case is similar. Moreover, we only derive relevant a-priori estimates on the $L^2$-based Sobolev norms. By standard approximation procedure, one obtain local existence.  
Throughout the proof, we use the usual multi-index convention 
\[ 
|(\al_1,\al_2)|:=\al_1+\al_2\qquad\text{and}\qquad  
(\al_1,\al_2)\geq (\beta_1,\beta_2) \text{ iff  }\al_1\geq \beta_1 \text{ and }\al_2\geq \beta_2.
\]

We now perform the following change of variables (recall that $-\de c= n$)
\begin{align}
N(t, X_1,X_2):=n(t, x_1,x_2), \quad C(t, X_1,X_2):=c(t,x_1,x_2),\quad (X_1,X_2):=(x_1e^{At}, x_2e^{-At}),
\end{align}
and let $N_0:=n_0$. For all $p\in[1,\infty]$ we obviously have
\begin{align}\label{L^p_relation_x_X}
|| N(t)||_{L_{  X}^p}=||n(t)||_{L_{\bx}^p},
\end{align}
%Without loss of generality, we assume that $A=1$ in the advective PKS equation \eqref{APKS-appendix}. For the general case, the  argument is similar. 
and \eqref{PKS} now becomes
\begin{align}
\pa_t N=e^{2At}\pa_{X_1X_1}N+e^{-2At}\pa_{X_2X_2}N -e^{2At}\pa_{X_1}(N\pa_{X_1}C)-e^{-2At}\pa_{X_2}(N\pa_{X_2}C),
%,\quad N_0(X_1,X_2)= n_0(x_1,x_2) .
\end{align} 
with $-e^{2At}\pa_{X_1X_1}C-e^{-2At}\pa_{X_2X_2}C=N$.  Since this is a divergence form PDE and $N\geq 0$, we obtain
\begin{align}
||n(t)||_{L_{\bx}^1}=||N(t)||_{L_{  X}^1}=||N_0||_{L_{  X}^1}=||n_0||_{L_\bx^1}=:M. \label{L_1_preserved}
\end{align} 

Next, similarly to \eqref{T1T2} we have
\begin{align}
\frac{d}{dt}\int N^2 d{  X} 
%=& -e^{2At}\int |\pa_{X_1}N|^2dX_1dX_2-e^{-2At}\int |\pa_{X_2}N|^2dX_1 dX_2\\
%&+e^{2At}\int \pa_{X_1}N\pa_{X_1}C NdX_1dX_2+e^{-2At}\int \pa_{X_2}N\pa_{X_2}CNdX_1dX_2\\
%=&-e^{2At}\int |\pa_{X_1}N|^2dX_1dX_2-e^{-2At}\int |\pa_{X_2}N|^2dX_1 dX_2\\
%&-\frac{1}{2}\int N^2\left(e^{2At}\pa_{X_1X_1} C
%+e^{-2At}\pa_{X_2X_2}C\right)dX_1dX_2\\
=-2e^{2At}\int |\pa_{X_1}N|^2 d{  X} -2e^{-2At}\int |\pa_{X_2}N|^2 d{  X} 
+  \int N^3  d{  X} ,
\end{align}
and Gagliardo-Nirenberg interpolation inequality $||N||_{3}^3\leq \sqrt{C_{GN}} ||\na N||_{2}||N||_{2}^2$
again yields
\begin{align}
\frac{d}{dt}||N(t)||_{2}^2 \leq -e^{2At} ||\pa_{X_1}N(t)||_2^2 -e^{-2At}||\pa_{X_2}N(t)||_2^2+C_{GN}e^{2At}||N(t)||_{2}^4.
\end{align}
%with some universal constant $B$. 
This shows that 
%$||N(t)||_{2}$ is bounded
% and $\sup_{s\in[0,t]} ||N(s)||_{2}$ is continuous on the time interval $[0,t_0)$ for some $t_0=t_0(||n_0||_2,A)$ (in fact, $||N(t)||_{2}$ will also be continuous once we show uniform boundedness of $||\na N(t)||_{2}$).  Let
there is $t_0=t_0(||n_0||_2,A)>0$ and
 $B_{0}=B_{0}(||n_0||_2,A)<\infty$ such that
\begin{align}
%||N(t)||_{L_X^2}\leq B_{L^2}(||N_0||_{L_X^2})<\infty,\quad \forall t\in[0, t_0(A^{-1},||n_0||_{L_\bx^2}^{-1})].
\sup_{t\in[0,t_0]} ||N(t)||_{2} \leq B_{0}. 
\label{L_2_estimate_appendix}
\end{align} 
\ifx
Now by the relation \eqref{L^p_relation_x_X}, we have that $||n(t)||_{L^2_{x_1,x_2}}$ is continuous and bounded on a short time interval, whose length depends on the initial data. We apply the $L^4$-energy estimate, together with an application of the Gagliardo-Nirenberg-Sobolev inequality to obtain that
\begin{align}
\frac{d}{dt}&\frac{1}{4}\int N^4dX\\
=&-\frac{3}{4}e^{2At}\int |\pa_{X_1}N^2|^2dX-\frac{3}{4}e^{-2At}\int |\pa_{X_2}N^2|^2dX-\frac{3}{4}\int N^4(e^{2At}\pa_{X_1X_1}C+e^{-2At}\pa_{X_2X_2}C)dX\\
=&-\frac{3}{4}e^{2At}\int |\pa_{X_1}N^2|^2dX-\frac{3}{4}e^{-2At}\int |\pa_{X_2}N^2|^2dX+\frac{3}{4}\int N^5 dX\\
\leq&-\frac{1}{4}e^{2At}\int |\pa_{X_1}N^2|^2dX-\frac{1}{4}e^{-2At}\int |\pa_{X_2}N^2|^2dX+Be^{6At}||N||_{2}^{8}.
\end{align}\fi
Similarly to \eqref{3.111}, Hardy-Littlewood-Sobolev inequality yields $||\na c||_{4}\leq C_{HLS}||n||_{{4/3}}$, 
and explicit representation of the Green's function of the Laplacian on $\rr^2$, we have that 
\begin{align}
||\na C(t)||_{4}\leq e^{At}||\na c(t)||_{4}\leq C_{HLS} e^{At}(||n(t)||_{1}+||n(t)||_{2}) 
\leq C_{HLS} e^{At}(M+ B_{0})
%\\
%=Be^{At}(||N(t)||_{1}+||N(t)||_{2})\leq B e^{At}(M+B_{L^2})<\infty, \quad \forall t\in[0,t_0].
%||\na_{  X} C(t)||_{L_X^4}\leq &e^{At}||\na_\bx c(t)||_{L_\bx^4}\leq Be^{At}(||n||_{L_\bx^1}+||n||_{L_\bx^2})\\
%=&Be^{At}(||N||_{L_X^1}+||N||_{L_X^2})\leq B e^{At}(M+B_{L^2})<\infty, \quad \forall t\in[0,t_0].
\label{na_c_L_4_appendix} 
\end{align}
for $t\in[0,t_0]$.
Furthermore,  Calderon-Zygmund inequality shows that for $t\in[0, t_0]$,
\begin{align}
||\na^2 C(t)||_{4}\leq e^{2At}||\na^2 c(t)||_{4}\leq 
%Be^{2At}\norm{\frac{D_\bx^2}{(-\de_\bx)}(-\de_\bx c(t)) }_{L_\bx^4}\leq 
C_{CZ} e^{2At}||n(t)||_{4} = C_{CZ} e^{2At}||N(t)||_{4}.
\label{na2_c_L_4_appendix}
\end{align}

We now use these estimates and Gagliardo-Nirenberg interpolation inequality $||N||_4\lesssim ||\na N||_2^{1/2} || N||_2^{1/2}$ to control $||\na N||_{2}$ via (with $B',B''$ some universal  constants)
%evolution of the $H^1$-norm of the solution with    the Gagliardo-Nirenberg-Sobolev inequality and  the estimates on the chemical gradient $\na C$ \eqref{na_c_L_4_appendix}, \eqref{na2_c_L_4_appendix} 
\begin{align}
\frac{d}{dt}&\frac{1}{2} ||\na N||_{2}^2 = \frac{d}{dt} \frac{1}{2}\sum_{|\al|=1}||D^\al_X N||_{2}^2\\
=&-\sum_{|\al|=1} e^{2At}||\pa_{X_1} D^\al_X N||_2^2-\sum_{|\al|=1} e^{-2At}||\pa_{X_2} D^\al_X N||_2^2 \\
&+\sum_{|\al|=1} e^{2At}\int \pa_{X_1} D^\al_X N D^\al_X   (N \pa_{X_1} C) dX 
+\sum_{|\al|=1} e^{-2At}\int \pa_{X_2}D^\al_X N D^\al_X   (N \pa_{X_2} C) dX\\
%\leq& -e^{2At}||\pa_{X_1} \na N||_2^2-e^{-2At}||\pa_{X_2} \na N||_2^2\\
%&+\sum_{|\al|=1} e^{2At} || \pa_{X_1}D^\al_X N||_2 (||D^\al_X  \pa_{X_1} C||_4||N||_4+||\pa_{X_1}C||_4||D_X^\al N||_4)  \\
%&+\sum_{|\al|=1} e^{-2At}  || \pa_{X_2}D^\al_X N||_2 (||D^\al_X  \pa_{X_2} C||_4||N||_4+||\pa_{X_2}C||_4||D_X^\al N||_4)\\ 
%\leq & -e^{2At}||\pa_{X_1} \na N||_2^2-e^{-2At}||\pa_{X_2} \na N||_2^2\\
%&+B\sum_{|\al|=1} e^{2At} || \pa_{X_1} D^\al_X N||_2 \left(||\na    N||_2 ||N||_2+ e^{At} (M+B_{L^2}) ||\na D_X^\al N||_2^{1/2} ||D_X^\al N||_2^{1/2} \right)  \\
%&+B\sum_{|\al|=1} e^{-2At}  || \pa_{X_2} D^\al_X N||_2 \left(||\na    N||_2 ||N||_2+e^{At} (M+B_{L^2}) ||\na D_X^\al N||_2^{1/2}  ||D_X^\al N||_2^{1/2} \right)\\ 
%\leq& -\frac{1}{2}e^{2At}||\pa_{X_1} \na N||_2^2-\frac{1}{2}e^{-2At}||\pa_{X_2} \na N||_2^2+Be^{5At}( 1+M^4  + B_{L^2}^4)|| \na N||_2^2.
\leq& -e^{-2At}||\na^2 N||_2^2
+ \sum_{j=1}^2 \sum_{|\al|=1} e^{(-1)^{j-1}2At} || \pa_{X_j}D^\al_X N||_2 \left( ||D^\al_X  \pa_{X_j} C||_4||N||_4+||\pa_{X_j}C||_4||D_X^\al N||_4 \right)  \\
\leq & -e^{-2At}||\na^2 N||_2^2
+B' e^{2At} || \na^2 N||_2 \left(||\na    N||_2 ||N||_2+ e^{At} (M+B_{0}) ||\na^2 N||_2^{1/2} ||\na  N||_2^{1/2} \right)  \\
\leq& -\frac{1}{2}e^{-2At}||\na^2 N||_2^2+ B'' e^{18At} \left( 1+M^4  + B_{0}^4 \right)|| \na N||_2^2.
\end{align}
This shows that $||\na N(t)||_{2}$ is bounded 
%and $\sup_{s\in[0,t]} ||N(s)||_{2}$ is continuous 
on the time interval $[0,t_0]$,
%%(in fact, $||N(t)||_{2}$ will also be continuous once we show uniform boundedness of $||\na N(t)||_{2}$). 
%which also yields a uniform bound on $||N(t)||_{4}$, and hence also on $||\na C(t)||_{\infty}$ via Morrey's inequality, \eqref{na_c_L_4_appendix}, and \eqref{na2_c_L_4_appendix}.
%Hence 
and so
\begin{align}
%||N(t)||_{L_X^2}\leq B_{L^2}(||N_0||_{L_X^2})<\infty,\quad \forall t\in[0, t_0(A^{-1},||n_0||_{L_\bx^2}^{-1})].
\sup_{t\in[0,t_0]} \max \left\{ ||N(t)||_{H^1}, 
%||N(t)||_{L^4}, 
||\na C(t)||_{L^4} 
%, ||\na C(t)||_{L^\infty} 
\right\} \leq B_1
\label{B.11} 
\end{align} 
for some $B_{1}=B_{1}(||n_0||_{L^1\cap H^1},A)<\infty$. 

We can now obtain recursively $H^r$-bounds for $r=2,3,\dots,s$.  Assume that 
\begin{align}\label{H_s-1_assumption}
\sup_{t\in[0,t_{0}]} \max \left\{ ||N(t)||_{H^{r-1}},  ||\na C(t)||_{L^4} \right\} \leq B_{r-1}.
%B_{H^{s-1}}(||N_0||_{L_X^1}, ||N_0||_{H^{s-1}}, A,t_0)<\infty,\quad\forall t\in[0,t_0].
\end{align}
Then
%We will prove the similar bound for the $H^s$-norm.  
%%Standard $H^s$-norm energy estimates yields higher regularity bounds. 
%To this end, we estimate the time evolution of the $\dot H^s$ Sobolev semi-norm. Applying the equation \eqref{APKS-appendix} and standard $\dot H^s$ energy estimate yields  that%,
\begin{align}
\frac{d}{dt}\frac{1}{2}& \sum_{|\al|=r}||D^\al_X N||_{2}^2 = -\sum_{|\al|=r} e^{2At}||\pa_{X_1} D^\al_X N||_2^2-\sum_{|\al|=r} e^{-2At}||\pa_{X_2} D^\al_X N||_2^2 \\
%& +\sum_{|\al|=r} e^{2At}\int \pa_{X_1} D^\al_X N D^\al_X  (N\pa_{X_1} C) dX 
%+\sum_{|\al|=r} e^{-2At}\int \pa_{X_2}   D^\al_X N D^\al_X (N\pa_{X_2} C) dX,
& -\sum_{|\al|=r} e^{2At}\int  D^\al_X N \pa_{X_1}D^\al_X  (N\pa_{X_1} C) dX 
-\sum_{|\al|=r} e^{-2At}\int    D^\al_X N \pa_{X_2}D^\al_X (N\pa_{X_2} C) dX,
\label{d/dt D_X_al N_2}
\end{align}
%We estimate the last two terms as follows. We focus on the second to last term. The other term is similar.
and we also have for $j=1,2$,
\begin{align}
%%\sum_{|\al|=s}& e^{2At}
%\int \pa_{X_j} D^\al_X N D^\al_X  (N\pa_{X_j} C) dX \leq
%%\leq&B\sum_{|\al|=s}\sum_{\ell\leq \al}e^{2At}||\pa_{X_1}D^\al_X N||_2(||D^\ell_X\pa_{X_1}C D^{\al-\ell}_X N||_2)+\\
%%\leq& B\sum_{|\al|=s}
%%B\sum_{|\al|=s} e^{2At}
%||\pa_{X_j}D^\al_X N||_2 
%%\left( ||\na C||_\infty||D^\al_X N||_2 + 
%\sum_{\ell\leq \al }
%%e^{2At}
%%||\pa_{X_j}D^\al_X N||_2
% ||D^\ell_X\pa_{X_j}C||_{4}|| D^{\al-\ell}_XN||_4 .
% %\right).
\bigg|\int& D_X^\al N\pa_{X_j}D_X^\al(N\pa_{X_j}C)dX\bigg| \\
&\leq 2^{|\alpha|} \sum_{ \ell\leq \al} ||\pa_{X_j}D_X^{\al-\ell}N||_2 ||D_X^\al N||_4 ||\pa_{X_j}D_X^\ell C||_4+ 
 2^{|\alpha|} \sum_{ { \ell\leq \al}} ||D_X^\al N||_2||D_X^{\al-\ell}N||_4||\pa_{{X_j}X_j}D_X^\ell C||_4. 
%\leq ||\pa_{X_j}D_X^\al N||_2||D_X^\al N||_4||\pa_{X_j}C||_4\\
%&+B\sum_{\substack{\ell\neq(0,0) \\ \ell\leq \al}} ||D_X^\al N||_2||\pa_{X_j}D_X^{\al-\ell}N||_4||\pa_{X_j}D_X^\ell C||_4+ B\sum_{ {
%\ell\leq \al}} ||D_X^\al N||_2||D_X^{\al-\ell}N||_4||\pa_{{X_j}X_j}D_X^\ell C||_4 
\end{align}
%&+B\sum_{|\al|=s}e^{2At}||\pa_{X_1}D^\al_X N ||_2 (||D_X^{s} C||_4||D^1_X N||_4 +||D_X^{s+1}C||_4||N||_4)
%\\&\sum_{|\al|=s}e^{2At}\\\\
Similarly to  \eqref{na2_c_L_4_appendix}, we obtain (with some $r$-dependent $B'$ and for all $|\ell|\leq r$)
\begin{align}
||\na^{|\ell|+1} C(t)||_{4}\leq & B' e^{(\ell+1)At}||\na^{|\ell|-1} N(t)||_{4} \leq B' e^{(\ell+1)At} ||\na^{|\ell|} N(t)||_{2}^{1/2}||\na^{|\ell|-1} N(t)||_{2}^{1/2},
%,\quad |\ell|\geq 1,\\
%||D_X^s C(t)||_{L_X^4}\leq &B(e^{At})||D_X^{s-2}N||_{L_X^4}\leq B(e^{At})||D_X^{s-2}N(t)||_{L_X^2}^{1/2}||D_X^{s-1}N(t)||_{L_X^2}^{1/2},\\
%\quad ||D_X^{s+1} C(t)||_{L_X^4}\leq& B(e^{At})||D_X^{s-1}N(t)||_{L_X^4}\leq B(e^{At})||D_X^{s-1} N(t)||_{L_X^2}^{1/2}||D_X^{s} N||_{L_X^2}^{1/2}.
\end{align}
and we also use the Gagliardo-Nirenberg interpolation inequality $||\na^{k}N||_4\lesssim ||\na^{k+1} N||_2^{1/2} ||\na^k N||_2^{1/2}$.
Combining the above estimates with \eqref{d/dt D_X_al N_2} and \eqref{H_s-1_assumption}, we see that (with some $r$-dependent $B''$)
%\begin{align}
%\sum_{|\al|=s}& e^{2At}\int \pa_{X_1} D^\al_X N D^\al_X  (\pa_{X_1} CN) dX\\
%\leq& \frac{1}{4}e^{-2At}\sum_{|\al|=s}||\pa_{X_1} D_X^{\al} N||_2^2+B(e^{At})(||N||_{H_X^{s-1}}^3+||N||_{L^4}^3)\sum_{|\al|=s}||D_X^\al N||_2\\
%&+B(e^{At})||N||_{H_X^{s-1}}^4+B(e^{At})||\na C||_\infty^2\sum_{|\al|=s}||D_X^\al N||_2^2.
%\end{align}
%Recalling that $||\na C(t)||_\infty<B(e^{At},A, t_0,||N_0||_{L^1\cap L^4})$ for $t\leq t_0$ \eqref{na_c_L_infty_appendix} and the assumption \eqref{H_s-1_assumption}, the differential equality \eqref{d/dt D_X_al N_2} can be estimated as follows:
\begin{align}
\frac{d}{dt}  ||\na^r N(t)||_{2}^2 \leq - e^{-2At} ||\na^{r+1} N(t)||_{2}^2 + B''e^{B''At} (1+ B_{r-1}^4) (1+ ||\na^r N(t)||_{2}^2).
% \frac{d}{dt}\frac{1}{2}\sum_{|\al|=s}||D^\al_X N||_{2}^2
%\leq &-\frac{1}{2}\sum_\al e^{2At}||\pa_{X_1} D^\al_X N||_2^2-\frac{1}{2}\sum_\al e^{-2At}||\pa_{X_2} D^\al_X N||_2^2 \\
%&+ B(e^{At},A, t_0, ||N_0||_{L^1\cap L^4})||D_X^s N||_2^2+B(e^{At})(1+B_{H^{s-1}})^6. 
%\leq& -\sum_\al e^{2At}||\pa_{X_1} D^\al_X N||_2^2-\sum_\al e^{-2At}||\pa_{X_2} D^\al_X N||_2^2+{C_{GNS}B(||N||_{H^{s-1}}, ||N||_{L^1\cap L^\infty})||D^s_X N||_2^2} \\
%\leq&-\sum_\al e^{2At}||\pa_{X_1} D^\al_X N||_2^2-\sum_\al e^{-2At}||\pa_{X_2} D^\al_X N||_2^2+B||D^s_\bx N||_2^2.  ||N_0||_{L^1\cap L^4}     ^2 +||N_0||_{  L^4}
\end{align}
This yields \eqref{H_s-1_assumption} for $r=3,4,\dots$, with $B_{r}=B_{r}(||n_0||_{H^r},A,B_{r-1})=B_{r}(||n_0||_{L^1\cap H^r},A)<\infty$.
This and \eqref{d/dt D_X_al N_2} also show continuity of $||N(t)||_{H^{s-1}}$ and boundedness of $||N(t)||_{H^{s}}$ on $[0,t_0]$.
%, concluding the proof.
%2 By solving the differential inequality, we obtain that  
%\begin{align}
%||D_X^sN(t)||_{L_X^2}\leq B_{H^s}(||N_0||_{L^1\cap L^\infty}, ||N_0||_{H^s},A,t_0), \quad \forall t\in [0, t_0].
%\end{align}
%Further note that $||D_{\bx}^s n(t)||_{L_\bx^2}= B(e^{At},s)||D_{X}^sN(t)||_{L_X^2}$ for some continuous function $B$ depending on $A, t, s$. Therefore, the $||n(t)||_{H^{s}_\bx}$ are continuous in time.  This concludes the proof.  %\frac{1}{B(e^{At},s)}||D_{X}^s N(t)||_{L_X^2}\leq\leq
%
%We note that even though  $|\bx|^2 n_0(\bx)\in L^1(\rr^2)$ was not used above, the standard  construction of a sequence of regularized solutions uses existence of a uniform bound on the second moment of these solutions, which follows from this hypothesis.  One can then use the Aubin-Lions lemma, together with $H^s$-regularity of $n$ and this second moment bound, to obtain precompactness of the sequence of regularized solutions and hence convergence along a subsequence. We refer the interested reader to the proof of  \cite[Proposition 2.1]{BlanchetCarrilloMasmoudi08} for further details. 
\end{proof}

If $s\geq 2$, Theorem \ref{thm:local_Hs_0} and Sobolev embedding yield an $L^\infty$ bound on solutions, which depends on $n_0$, $A$, and $t$.  We used such a bound in the proof of Theorem \ref{Thm:PKS_suppression_of_blow_up}, but that proof requires it to not depend on $A$.   We will therefore need to prove the following result instead.

\begin{thm}[$A$-independent $L^\infty$ bound]\label{thm:Local_Hs}
For $d\in\{2,3\}$ and $0\leq n_0\in L^1(\rr^d)\cap H^2(\rr^d)$ with $ |\bx|^2 n_0(\bx)\in {L^1}(\rr^d)$,
%%finite moment constraints
%\begin{align}
%%||n_0||_{H^s_\bx }<\infty,\quad 
%||\, |\bx|^6 n_0||_{L^1}<\infty\qquad\text{and}\qquad |||\mathbf{x}|^3 \na  n_0||_{L^2}<\infty ,
%\end{align}
assume that the unique $H^2_\bx$-solution $n$ to \eqref{PKS} from Theorem \ref{thm:local_Hs_0} satisfies
\begin{align}
||n||_{L_t^\infty([0,T];L_\bx^2)}\leq C_{L^2} \label{L2_bound_criteria}
\end{align}
for some $T,C_{L^2}<\infty$  (here $T$ is assumed to be smaller than the maximal time of existence for $n$).  Then $||n(t,\cdot)||_{L^1}$ is constant, $||n(t,\cdot)||_{H^{1}}$ continuous, and $||n(t,\cdot)||_{H^{2}}$ bounded on $[0,T]$, and
%there are $(n_0,A,T)$-dependent $C_1,C_2,C_\infty<\infty$ such that
\begin{align}
||\, |\bx|^2 n||_{L^\infty_t([0,T]; L_\bx^1)} &\leq C_0=C_0(n_0, A,T)<\infty,\label{Moment_n} \\
%|| \,|\bx|^3 \na_\bx n||_{L^\infty_t([0,T]; L_\bx^2)} &\leq  C_1= C_1(n_0,A,T),\label{Moment_na_n}\\
||n||_{L_t^\infty([0,T]; L_\bx^\infty)} &\leq C_\infty= C_\infty( C_{L^2} , ||n_0||_{L^1\cap H^2})<\infty.\label{Linfty_n}
\end{align}
%with $C_0,C_\infty<\infty$ some constants depending on $C_{L^2}, n_0,A,T$ as indicated above.
\end{thm}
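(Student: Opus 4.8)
The plan is to establish the three claims in the order they are stated, treating the $A$-independent $L^\infty$ bound \eqref{Linfty_n} as the heart of the matter and the moment bound \eqref{Moment_n} as the qualitative tool that unlocks it. Throughout, Theorem \ref{thm:local_Hs_0} supplies an $H^2_\bx$-solution on $[0,T]$ together with the (possibly $A$- and $T$-dependent) regularity and decay needed to make all the integrations by parts below rigorous, so the task is to upgrade these to the quantitative, and crucially $A$-independent, statements.

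First I would record mass conservation and the second moment bound. Since \eqref{PKS} is in divergence form and $n\ge 0$, integrating the equation gives $\|n(t)\|_1=M$ for all $t$, the advection term dropping out because $\nabla\cdot\mathbf{u}=0$. For \eqref{Moment_n} I would compute $\frac{d}{dt}\int|\bx|^2 n\,d\bx$: the diffusion term contributes $2dM$, the advective term becomes $2A\int(\bx\cdot\mathbf{u})\,n\,d\bx$ after one integration by parts (using $\nabla\cdot\mathbf{u}=0$), and the chemotactic term equals $2\int(\bx\cdot\nabla c)\,n\,d\bx=-(d-2)\int nc\,d\bx\le 0$ by the standard symmetrization of the Newtonian kernel (and equals $-M^2/2\pi\le 0$ when $d=2$). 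Since $\mathbf{u}$ is linear, $\bx\cdot\mathbf{u}\le|\bx|^2$ pointwise, so
\[
\frac{d}{dt}\int|\bx|^2 n\,d\bx\le 2dM+2A\int|\bx|^2 n\,d\bx,
\]
and Grönwall's inequality yields \eqref{Moment_n} with $C_0=C_0(n_0,A,T)$. The integrations by parts here are justified by a cutoff argument and the decay of $n$ from Theorem \ref{thm:local_Hs_0}; since the bound is permitted to depend on $A$, no uniformity is required at this stage.

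The crucial step is the $A$-independent $L^\infty$ bound. The key observation is that in every $L^p$ energy identity the advection term vanishes: its contribution to $\frac{d}{dt}\frac1p\|n\|_p^p$ is
\[
-A\int n^{p-1}\,\mathbf{u}\cdot\nabla n\,d\bx=-\frac{A}{p}\int\mathbf{u}\cdot\nabla(n^p)\,d\bx=\frac{A}{p}\int(\nabla\cdot\mathbf{u})\,n^p\,d\bx=0,
\]
where the integration by parts is legitimate because $\int|\bx|\,n^p\,d\bx\le\|n\|_\infty^{p-1}\,\||\bx|n\|_1<\infty$ controls the boundary flux at infinity — exactly the mechanism used for the term $T_2$ in the proof of Theorem \ref{Thm:PKS_suppression_of_blow_up}, and it is here that the finiteness of \eqref{Moment_n} is used. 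Consequently the $L^p$ estimate reduces to the one for the advection-free Patlak--Keller--Segel equation,
\[
\frac{d}{dt}\|n\|_p^p=-\frac{4(p-1)}{p}\big\|\nabla(n^{p/2})\big\|_2^2+(p-1)\int n^{p+1}\,d\bx,
\]
with no trace of $A$. Since $2>d/2$ for $d\in\{2,3\}$, the assumed uniform bound $\|n\|_{L^\infty_t L^2}\le C_{L^2}$ sits above the scaling threshold, and I would run the Moser--Alikakos iteration on the sequence $p_k=2^k$: bound $\int n^{p+1}$ by Gagliardo--Nirenberg applied to $w=n^{p/2}$, absorb the gradient term into the dissipation, and iterate, feeding in $\|n\|_1=M$ and $\|n\|_2\le C_{L^2}$ as base data. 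This produces $\|n\|_{L^\infty_t L^\infty}\le C_\infty$ with $C_\infty$ depending only on $C_{L^2}$, $M$ and $d$, hence only on $C_{L^2}$ and $\|n_0\|_{L^1\cap H^2}$, which is \eqref{Linfty_n}.

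Finally, the $L^1$-constancy, $H^1$-continuity and $H^2$-boundedness on all of $[0,T]$ follow from the a priori estimates of Theorem \ref{thm:local_Hs_0} together with a standard continuation argument: those estimates show that the $H^2_\bx$-norm can blow up only when the $L^2_\bx$-norm does, and \eqref{L2_bound_criteria} precludes the latter, so the solution and its $H^2$-norm remain bounded up to time $T$. I expect the main obstacle to be the $L^\infty$ step — specifically, verifying that the advection genuinely contributes \emph{zero} (rather than some $A$-dependent quantity) in each energy estimate, which is precisely why the qualitative moment bound \eqref{Moment_n} must be secured first. Once the advection is known to drop out, the $A$-independence of $C_\infty$ is automatic, because the remaining iteration is identical to the advection-free PKS case.
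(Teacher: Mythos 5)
Your proposal is correct and follows essentially the same route as the paper: mass conservation plus a Gr\"onwall-type second-moment estimate (allowed to depend on $A$ and $T$), the observation that this moment bound together with the qualitative bounds from Theorem \ref{thm:local_Hs_0} forces the advective term to integrate to zero in every $L^p$ energy identity, and then a Moser--Alikakos iteration from the assumed $L^2$ bound, giving an $A$-independent $C_\infty$. The only (inessential) implementation difference is your handling of the chemotactic term via the exact identity $(p-1)\int n^{p+1}\,d\bx$ and Gagliardo--Nirenberg applied to $n^{p/2}$, whereas the paper bounds it through $\|\nabla c\|_\infty \lesssim \|n\|_1 + \|n\|_4$ and runs the $p=2$ step of the iteration first to control $\|n\|_4$.
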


\begin{proof}
We will again only consider $d=2$.
The first three claims are from Theorem \ref{thm:local_Hs_0}, and its proof also shows that
\begin{align}
||n||_{L_t^\infty([0,T]; H_\bx^2)}\leq C_{H^2}=C_{H^2}(C_{L^2}, ||n_0||_{L^1\cap H^2},A, T)<\infty. \label{qualitative_bound}
\end{align}
By Sobolev embedding we also have (cf.~\eqref{Linfty_n}) 
 \begin{align}
||n||_{L_t^\infty([0,T]; L_\bx^\infty)}\leq C_{L^\infty} = C_{L^\infty}(C_{L^2}, ||n_0||_{L^1\cap H^2}, A, T)<\infty. \label{qualitative_bound_2}
\end{align}
 Furthermore, Gagliardo-Nirenberg interpolation inequality and Hardy-Littlewood-Sobolev inequality show that
\begin{align}
||\na c||_\infty\lesssim ||n||_3^{3/5} ||\na c||_4^{2/5}  \lesssim ||n||_3^{3/5} ||n||_{4/3}^{2/5} \lesssim ||n||_1+||n||_4 \lesssim ||n||_1+||n||_\infty,
\label{B.1a}
\end{align}
so
\begin{align}
||\na c||_{L^\infty_t([0,T];L_\bx^\infty)}   \leq C_{\na c} = C_{\na c}(C_{L^2}, ||n_0||_{L^1\cap  H^2}, A,T)<\infty.\label{qualitative_bound_3}
\end{align}
Note that once we obtain \eqref{Linfty_n}, we will in fact have $C_{\na c} = C_{\na c}(C_{L^2}, ||n_0||_{L^1\cap  H^2})$ here.

%Next we derive estimates on the second, fourth, and sixth moment of $n$, the first of which is similar to an argument in \cite{BlanchetEJDE06}.  
We now prove \eqref{Moment_n}, via an argument similar to \cite{BlanchetEJDE06}.
Multiply  \eqref{PKS} by a sequence of $C^\infty_c$-functions $\varphi_k$ approximating $|\bx|^2$, with uniformly bounded $\na^2 \varphi_k$ and $|\bx|\,|\na \varphi_k(\bx)|\leq C(1+\varphi_k(\bx))$, and recall that  the Green's function for the Laplacian on $\rr^2$ is $\frac{1}{2\pi}\log |\cdot|$, to obtain for $t\in[0,T]$,
\begin{align*}
\frac{d}{dt}\int_{\rr^2} n(t,\bx) \varphi_k(\bx)d\bx
=&\int_{\rr^2} \de n(t,\bx)  \varphi_k d\bx - A\int_{\rr^2} \na\cdot((-x_1,x_2)n(t,\bx)) \varphi_k(\bx)d\bx. \\
& +\frac{1}{2\pi}\iint_{\rr^2\times\rr^2}  \na_\bx \cdot \left(\frac{\bx-\mathbf{y}}{|\bx-\mathbf{y}|^2}n(t,\mathbf{y})n(t,\bx)\right) \varphi_k(\bx) d\bx d\mathbf{y}
\end{align*}
Applying integration by parts and a standard symmetrization trick, we see that
% Mass} Recalling that the mass is preserved from Theorem \ref{thm:local_Hs_0}, and a
\begin{align}
\frac{d}{dt}\int_{\rr^2} n(t,\bx) \varphi_k(\bx)d\bx
%=&\frac{1}{A}\int_{\rr^2} n(t,\bx)\de  \varphi_k(\bx)  d\bx+\int_{\rr^2}n(t,\bx) (-x_1,x_2)\cdot\na  \varphi_k(\bx) d\bx\\
%&-\frac{1}{2\pi A}\iint_{\rr^2\times\rr^2}\frac{(\bx-\mathbf{y})\cdot \na  \varphi_k(\bx)}{|\bx-\mathbf{y}|^2}n(t,\mathbf{y}) n(t,\bx)d\bx d\mathbf{y}\\
=&\int_{\rr^2}n(t,\bx)\de  \varphi_k(\bx)d\bx+ {A}\int_{\rr^2} n(t,\bx) (-x_1,x_2)\cdot\na  \varphi_k(\bx)d\bx\\
&-\frac{1}{4\pi  }\iint_{\rr^2\times\rr^2}\frac{(\bx-\mathbf{y})\cdot(\na  \varphi_k(\bx)-\na \varphi_k(\mathbf{y}))}{|\bx-\mathbf{y}|^2}n(t,\bx)n(t,\mathbf{y})d\bx d\mathbf{y}, \qquad \label{moment_general_form}
\end{align}
and then integrating in time and taking $k\to\infty$ yields
\begin{align}
\int_{\rr^2}n(t,\bx)|\bx|^2d\bx \leq \int_{\rr^2}n_0(\bx)|\bx|^2d\bx +  \left({4M}-\frac{M^2}{2\pi } \right)t + 2A \int_0^t \int_{\rr^2} n(s,\bx)|\bx|^2d\bx \, ds\qquad \forall t\in[0,T].
\end{align}
Gronwall's inequality  then shows \eqref{Moment_n}.

Finally, to prove \eqref{Linfty_n}, we will first estimate $L^p$-norms of $n$ for $p=2^j$ ($j\in \mathbb{N}$) via
\begin{align}
\frac{d}{dt}\frac 1{2p} \int n^{2p}d\bx=&-\frac{2p-1}{p^2} \int |\na n^p|^2d\bx+\frac{2p-1}{p}\int n^{p} \na n^p\cdot \na c d\bx+ A\int n^{2p-1}\mathbf{u}\cdot \na n d\bx\\
\leq&-\frac 1{2p} ||\na n^p||_2^2+p||n^p||_2^2||\na c||_\infty^2+ A\int n^{2p-1}\mathbf{u}\cdot \na nd\bx.\label{n_L2p_1x}
\end{align} 
%From \eqref{Moment_na_n}, \eqref{qualitative_bound_2}, $n\in L_t^\infty([0,T]; H_\bx^1)$, and $(1+|\cdot|)^{-3}\mathbf u\in L^2(\rr^2)$ we have $n^{2p-1}\mathbf{u}\cdot \na n\in L_t^\infty([0,T]; L_\bx^1)$.
%% (we note that in 2D we in fact only need a bound on the second moment of $\nabla n$ here, but \eqref{Moment_na_n} is required in 3D).
%% yields that the integrand is $L^1$-integrable:
%%\begin{align} \int_{\rr^2}|n^{2p-1}\mathbf{u}\cdot\na n|d\bx \leq& \int_{\rr^2}n^{2p-2}(n^2+|\mathbf{u}|^2| \na n|^2)d\bx \\
%%\leq& ||n ||_\infty^{2p-2}\int_{\rr^2}n^2 +A^2|\bx|^2|\na n|^2d\bx <\infty.
%%\end{align} 
From \eqref{qualitative_bound_2}, $|\bx|^{-1}\mathbf u(\bx)\in L^\infty(\rr^2)$, $|\bx|\sqrt n \in L_t^\infty([0,T]; L_\bx^2)$ (see \eqref{Moment_n}), and $\na n\in L_t^\infty([0,T]; L_\bx^2)$ we have that $n^{2p-1}\mathbf{u}\cdot \na n\in L_t^\infty([0,T]; L_\bx^1)$.
So for each $t\in[0,T]$ we obtain
\begin{align}
\int_{\rr^2}n^{2p-1}\mathbf{u}\cdot\na nd\bx =&\lim_{R\rightarrow\infty} \int_{B(0,R)}\mathbf{u}\cdot\na n^{2p}d\bx 
=\lim_{R\rightarrow\infty} \int_{\pa B(0,R)}\mathbf{u}\cdot\frac{\bx}{|\bx|} n^{2p}dS .%\\&
\end{align}
It now follows from  \eqref{qualitative_bound_2} and \eqref{Moment_n} that
\begin{align*}
\left| \int_{\rr^2}n^{2p-1}\mathbf{u}\cdot\na nd\bx \right|\leq  A ||n||_\infty^{2p-1} \liminf_{R\to\infty} \int_{\pa B(0,R)} R n \, dS  =0,
\end{align*}
and then \eqref{n_L2p_1x}, Nash inequality, and \eqref{B.1a} show for some universal $C>0$,
\begin{align}
\frac{d}{dt} ||n||_{2p}^{2p}
%\leq- ||\na n^p||_2^2 + 2 p^2 ||n^p||_2^2||\na c||_\infty^2.
%\leq- ||\na n^p||_2^2 + Cp^2 ||\na n^p||_2 ||n^p||_1 ( ||n||_1+||n||_4) \leq C^2p^4  ||n||_p^{2p} ( ||n_0||_1^2+||n||_4^2).
\leq- \frac{|| n^p||_2^4}{C ||n^p||_1^2} + Cp^2 ||n^p||_2^2 ( ||n||_1^2+||n||_4^2) 
= \frac{||n||_{2p}^{2p}} C \left( - \frac{||n||_{2p}^{2p}} {||n||_p^{2p}} +(Cp)^2 \left( ||n_0||_1^2+||n||_4^2 \right) \right).
\label{n_L2p_1}
\end{align} 

Now we apply the Moser-Alikakos iteration (see, e.g.,  \cite[Lemma 3.2]{CalvezCarrillo06},  \cite[Appendix]{KiselevXu15}, or \cite{Kowalczyk05}) to get \eqref{Linfty_n}.  Let $a_j:=||n||_{L_t^\infty([0,T];L^{2^j}_\bx) }$.
When $j=1$ (so $p=2$),  the above estimate and \eqref{L2_bound_criteria} show
\[
a_{2} \leq C_{L^4}= C_{L^4} (C_{L^2},||n_0||_{L^1\cap L^4}) <\infty.
\]
When $j=2,3,\dots$, after using $||n_0||_{2^{j+1}}\leq ||n_0||_{L^1\cap L^\infty} := ||n_0||_1+||n_0||_\infty$ we instead obtain
\[
a_{j+1}\leq \max\left\{ (\tilde C2^j)^{1/2^j} a_j,  ||n_0||_{L^1\cap L^\infty}  \right\},
\]
with $\tilde C:=\max\{C ( ||n_0||_1+C_{L^4}),1\}$.  
%If  $a_{j_k}\leq ||n_0||_{L^1\cap L^\infty} $ for some sequence $j_k\to\infty$, then we  clearly have $ ||n||_{L_t^\infty([0,T];L^{\infty}_\bx) } \leq ||n_0||_{L^1\cap L^\infty} $. Otherwise
Then for all $j\geq 2$ we have
\[
a_{j}\leq \max \left\{C_{L^4}, ||n_0||_{L^1\cap L^\infty} \right\} \prod_{k=2}^\infty (\tilde C2^k)^{1/2^k} 
%\leq 4{\tilde C}^{1/2} \max \left\{C_{L^4}, ||n_0||_{L^1\cap L^\infty} \right\} 
=:C_\infty (C_{L^2},||n_0||_{L^1\cap H^2})
\]
(recall that $H^2(\rr^2)\subseteq L^\infty(\rr^2)$ by Sobolev embedding),
%so in both cases we have 
yielding \eqref{Linfty_n} with this $C_\infty$.
\end{proof}

Finally, we note that the above results and proofs easily extend to the shear flow case \eqref{eq:PKS_Shear}, with no requirement on the second spatial moment of $n$ needed here because $\bf u$ is bounded.

\begin{thm}\label{thm:Local_shear}
For $d\in\{2,3\}$, $u\in C^2(\mathbb T^{d-1})$, and $0\leq n_0\in L^1(\rr\times \mathbb T^{d-1})\cap H^2(\rr\times \mathbb T^{d-1})$,
% with $|\bx|^2 n_0(\bx)\in L^1(\rr\times \mathbb T^{d-1})$,
 there is $t_0=t_0( ||n_0||_{L^2},A)>0$ such that there is a unique $H^2_\bx$-solution $n$ to \eqref{eq:PKS_Shear} with $n(0,\bx)=n_0(\bx)$ on the time interval $[0,t_0]$, and $n$ remains unique on any time interval $[0,T]$ on which $||n(t,\cdot)||_{L^2}$ remains bounded.  Moreover, if
\begin{align}
||n||_{L_t^\infty([0,T];L_\bx^2)}\leq C_{L^2} \label{L2_bound_criteria_shear}
\end{align}
for some $T,C_{L^2}<\infty$  (here $T$ is assumed to be smaller than the maximal time of existence for $n$),  then $||n(t,\cdot)||_{L^1}$ is constant, $||n(t,\cdot)||_{H^{1}}$ continuous, and $||n(t,\cdot)||_{H^{2}}$ bounded on $[0,T]$, and
%there are $(n_0,A,T)$-dependent $C_1,C_2,C_\infty<\infty$ such that
\begin{align}
||n||_{L_t^\infty([0,T]; L_\bx^\infty)} \leq C_\infty= C_\infty( C_{L^2} , ||n_0||_{L^1\cap H^2})<\infty.\label{Linfty_n_shear}
\end{align}
%with $C_0,C_\infty<\infty$ some constants depending on $C_{L^2}, n_0,A,T$ as indicated above.
\end{thm}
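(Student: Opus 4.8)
The plan is to follow the two-step scheme of Theorems \ref{thm:local_Hs_0} and \ref{thm:Local_Hs}, exploiting the crucial simplification that the shear velocity field $\mathbf{u}(\bx)=(u(\bxm),\mathbf{0})$ is bounded on $\rr\times\mathbb{T}^{d-1}$ (since $u\in C^2(\mathbb{T}^{d-1})$ and $\mathbb{T}^{d-1}$ is compact) and divergence-free. Because of boundedness I would not perform the exponential change of variables used in the hyperbolic case, and, as the remark preceding the theorem asserts, no spatial moment bounds are needed. First I would establish the local-in-time $H^2$ a priori estimates yielding local well-posedness and uniqueness via the standard approximation procedure (cf.\ Theorem \ref{thm:local_Hs_0}). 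At the $L^2$ level, integration by parts gives $\frac{d}{dt}\|n\|_2^2=-2\|\na n\|_2^2+\int n^3\,d\bx-A\int \mathbf{u}\cdot\na(n^2)\,d\bx$, and the advective term vanishes exactly: since $\na\cdot\mathbf{u}=0$, $\mathbf{u}$ is bounded, and $n$ decays as $x_1\to\pm\infty$, the integration by parts over $\rr\times\mathbb{T}^{d-1}$ is immediately justified (no contribution from the compact cross-section, and the $x_1$-fluxes vanish). Combined with the Gagliardo-Nirenberg inequality on $\rr\times\mathbb{T}^{d-1}$ (extended via periodization exactly as in the proof of Theorem \ref{Thm:PKS_suppression_of_blow_up_TR}), this produces the short-time $L^2$ bound, while mass conservation follows from the divergence form together with $n\geq 0$.

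Next I would recover the $H^1$ and $H^2$ bounds recursively, as in Theorem \ref{thm:local_Hs_0}, controlling the coupling terms through $\|\na c\|_4$ and $\|\na^2 c\|_4$. Here the estimates are cleaner than in the hyperbolic case because there are no exponential weights $e^{\pm At}$; the Calder\'on-Zygmund and Hardy-Littlewood-Sobolev bounds (or Lemma \ref{L.B.1x}) control $c=(-\de)^{-1}n$ with $A$-independent constants. This gives continuity of $\|n(t)\|_{H^1}$ and boundedness of $\|n(t)\|_{H^2}$ on any interval $[0,T]$ on which \eqref{L2_bound_criteria_shear} holds.

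The heart of the argument is the $A$-independent $L^\infty$ bound \eqref{Linfty_n_shear}, which I would obtain by the same $L^{2p}$ energy estimates and Moser-Alikakos iteration used for \eqref{Linfty_n}. The key point is that the advective contribution $A\int n^{2p-1}\mathbf{u}\cdot\na n\,d\bx=\frac{A}{2p}\int \mathbf{u}\cdot\na (n^{2p})\,d\bx$ again vanishes identically, and this time \emph{without} any appeal to second-moment bounds, precisely because $\mathbf{u}$ is bounded. Using Lemma \ref{L.B.1x} to bound $\|\na c\|_\infty\leq C_d(\|n\|_1+\|n\|_{d+1})$ with an $A$-independent constant, together with the Nash inequality on $\rr\times\mathbb{T}^{d-1}$, I would derive a recursion of the form
\[
\frac{d}{dt}\|n\|_{2p}^{2p}\leq \frac{\|n\|_{2p}^{2p}}{C}\left(-\frac{\|n\|_{2p}^{2p}}{\|n\|_p^{2p}}+(Cp)^2\big(\|n_0\|_1^2+\|n\|_{d+1}^2\big)\right),
\]
exactly as in \eqref{n_L2p_1}, whose constants depend only on $C_{L^2}$ and $\|n_0\|_{L^1\cap H^2}$. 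The Moser-Alikakos iteration then closes the bound to yield $C_\infty=C_\infty(C_{L^2},\|n_0\|_{L^1\cap H^2})$.

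The main obstacle to watch is precisely the $A$-independence of $C_\infty$: every constant entering the Moser-Alikakos recursion must be checked to be free of $A$, which hinges on (i) the exact cancellation of the advective term and (ii) the $A$-free bound on $\|\na c\|_\infty$ from Lemma \ref{L.B.1x}. Beyond this, the argument is genuinely easier than its hyperbolic counterpart, since the boundedness of the shear flow removes both the need for the moment estimate \eqref{Moment_n} and the delicate justification of vanishing boundary fluxes that it was used for.
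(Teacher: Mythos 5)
Your proposal is correct and takes essentially the same approach as the paper: the paper's entire proof of Theorem \ref{thm:Local_shear} is the remark that the results and proofs of Theorems \ref{thm:local_Hs_0} and \ref{thm:Local_Hs} extend to \eqref{eq:PKS_Shear} with no second-moment hypothesis needed because $\mathbf{u}$ is bounded, which is exactly the simplification your argument is built around. Your fleshing-out of the details (direct vanishing of the advective flux terms without moment bounds, $A$-independent control of $\na c$ via Lemma \ref{L.B.1x}, and the Moser--Alikakos iteration yielding \eqref{Linfty_n_shear} with $C_\infty$ free of $A$) matches what the paper intends.
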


\bibliographystyle{abbrv}
\bibliography{nonlocal_eqns,JacobBib,SimingBib}

\end{document}